\numberwithin{equation}{section}
\newcounter{CountAlpha}
\theoremstyle{plain}
\newtheorem{MainThm}[CountAlpha]{Theorem}
\newtheorem{MainProp}[CountAlpha]{Proposition}
\newtheorem{Thm}{Theorem}[section]
\newtheorem{Lem}[Thm]{Lemma}
\newtheorem{Prop}[Thm]{Proposition}
\newtheorem{Cor}[Thm]{Corollary}
\theoremstyle{definition}
\newtheorem{Def}[Thm]{Definition}
\newtheorem{Rk}[Thm]{Remark}
\newtheorem{Ex}[Thm]{Example}
\newtheorem{Not}[Thm]{Notation}
\newcommand{\poly}[3]{\Delta(  #1  ;  #2  ;  #3  ) }
\newcommand{\cpoly}[2]{\Delta(  #1  ; #2  ) }
\newcommand{\ord}{\mathrm{ord}}
\newcommand{\Diff}{\mathrm{Diff}}
\newcommand{\Spec}{\mathrm{Spec\,}}
\newcommand{\Sing}{\mathrm{Sing\,}}
\newcommand{\Dir}{\mathrm{Dir}}
\newcommand{\Rid}{\mathrm{Rid}}
\newcommand{\IDir}{\mathbb{D}\mathrm{ir}}
\newcommand{\IRid}{\mathbb{R}\mathrm{id}}
\newcommand{\wrt}{with respect to }
\newcommand{\vin}{\rotatebox{90}{\ensuremath{\in}}}
\newcommand{\LSB}{LSB}
\newcommand{\atA}{ \alpha {\bf\cdot} A }
\newcommand{\btB}{ \beta {\bf\cdot} B }
\newcommand{\hR}{\widehat{R}}
\newcommand{\hf}{\widehat{f}}
\newcommand{\hy}{\widehat{y}}
\newcommand{\hz}{\widehat{z}}
\newcommand{\car}[1]{\mathrm{char}(#1)}
\newcommand{\coeff}{C}
\newcommand{\NW}{{N}}
\newcommand{\resfield}{K}
\newcommand{\gr}{\operatorname{gr}}
\newcommand{\ini}{\operatorname{in}}
\newcommand{\TC}{T}
\newcommand{\ITC}{\mathbb{\TC}}
\newcommand{\IA}{\mathbb{A}} 
\newcommand{\IC}{\mathbb{C}}	
\newcommand{\ID}{\mathbb{D}}	
\newcommand{\IE}{\mathbb{E}}
\newcommand{\IQ}{\mathbb{Q}}	
\newcommand{\IR}{\mathbb{R}}	
\newcommand{\IZ}{\mathbb{Z}}	
\newcommand{\cD}{\mathcal{D}}
\newcommand{\cO}{\mathcal{O}}
\newcommand{\maxIdeal}{M}
\newcommand{\red}[1]{\textcolor{black}{#1}}
\definecolor{gruen}{rgb}{0, 0.666, 0}
\begin{document}
	\thispagestyle{empty}

	\title[Idealistic exponents: Tangent cone, ridge, characteristic polyhedra]{Idealistic exponents: Tangent cone, ridge, characteristic polyhedra}

	\author{Bernd Schober}
	\address{
		Bernd Schober\\
		Fakult\"at f\"ur Mathematik\\
		Universit\"at Regensburg\\
		Universit\"atsstr. 31\\
		93053 Regensburg \\
		Germany}
	\curraddr{Institut f\"ur Mathematik, Carl von Ossietzky Universit\"at Oldenburg, 26111 Oldenburg, Germany}
	\email{bernd.schober@uni-oldenburg.de}

	\thanks{Supported by the Emmy Noether Programme ``Arithmetic over finitely generated fields" (Deutsche Forschungsgemeinschaft, KE 1604/1-1) and by a Research Fellowship of the Deutsche Forschungsgemeinschaft (SCHO 1595/1-1).}

	\keywords{singularities, idealistic exponents, characteristic polyhedra, Newton polyhedra, resolution of singularities}
	\subjclass[2010]{52B99, 14B05, 14E15}

	\begin{abstract}
		We study Hironaka's idealistic exponents over $ \Spec ( \IZ ) $.
		We give an idealistic interpretation of the tangent cone, the directrix, and the ridge.
		The main purpose is to introduce the notion of characteristic polyhedra of idealistic exponents and deduce from them intrinsic data on the idealistic exponent.
	\end{abstract}

	\maketitle


	%
	%
	%
	%
	%
	%
	%
	%
	%
	%
	%
	%
	%
	%

	\section*{Introduction}
	\label{Intro}
	
	Polyhedra are important tools to study the local nature of singularities.
	For example, in \cite{HiroBowdoin} Hironaka used characteristic polyhedra of singularities to prove resolution of excellent hypersurface singularities in dimension two. 
	Further, he introduced the notion of characteristic polyhedra of an ideal in \cite{HiroCharPoly}, which has been used to extend \cite{HiroBowdoin} to the case of excellent schemes of dimension at most two  in \cite{CJS}.
	It plays also an essential role in the work of Cossart and the author \cite{HomeworkDim2}, where a strictly decreasing invariant for the strategy of \cite{CJS} is constructed. 
	Moreover, the characteristic polyhedron appears in the proof for resolution of singularities for (arithmetic) threefolds by Cossart and Piltant \cite{CP2, CPdim3}. 
	
	Based on Hironaka's work we develop in this paper the notion of characteristic polyhedra of idealistic exponents.
	The starting point for this study was the task to show that the invariant introduced by Bierstone and Milman \cite{BMheavy} in order to prove constructive resolution of singularities in characteristic zero can be purely determined by considering certain polyhedra and their projections.
	This result and thus a first application of characteristic polyhedra of idealistic exponents is discussed in \cite{BerndBM}.
	
	Nevertheless the theory of these polyhedra goes beyond the situation over fields of characteristic zero.
	Another interesting application would be the reinterpretation of the strategy of \cite{CJS} in the language of idealistic exponents.
	Since Hironaka's characteristic polyhedron is intensively used in \cite{CJS} and \cite{HomeworkDim2} there is the need to introduce appropriate polyhedra in the setting of idealistic exponents.

	\smallskip
	
	Given a regular scheme $ Z $ of finite type over $ \Spec ( \IZ ) $ we recall the notion of pairs $ \IE = ( J, b ) $, where $ J  \subset \cO_Z $ is a quasi-coherent ideal sheaf and $ b \in \IZ_+ $ an integer.
	(Later this will be extended to $ b \in \IQ_+ $).
	Roughly speaking, two pairs are equivalent, denoted by $ \sim $, if they undergo the same resolution process.
	Additionally, there is a technical part in the definition of $ \sim $ that the first condition is even true after adding finitely many new variables.
	The latter is a crucial property for proofs and is sometimes called Hironaka's trick.  
	An idealistic exponent $ \IE_\sim $ denotes then the equivalence class of a pair $ \IE $ \wrt $ \sim $.
	
	To a pair $ \IE $ and a point $ x \in Z $ we can associate the tangent cone, the directrix and the ridge of $ \IE$ at $ x $. 
	The latter two are closely related:
	for example, if the residue field of the regular local ring $ \cO_{Z,x} $ is perfect, then the reduced ideal of the ridge and the ideal of the directrix coincide.
	A detailed discussion of this is given in Remark \ref{Rk:DirRid}.
	
	It was already shown in \cite{HiroIdExp} that the directrices of equivalent pairs coincide (perfect base field!). 
	In order to reveal a similar relation for the tangent cone (resp.~the ridge) we introduce the concepts of \emph{idealistic tangent cone $ \ITC_x ( \IE ) $, idealistic directrix $ \IDir_x ( \IE ) $, and idealistic ridge $ \IRid_x ( \IE ) $} of a pair $ \IE $ at a singular point $ x $.
	Whereas the first concept (of an idealistic variant of the tangent cone) already appears in the work of Benito and Villamayor 
	(see \cite[section 1.2]{AngelicaOrlandoElimination})
	or Kawanoue and Matsuki 
	(see \cite[Definition 1.1.2]{IFPObergurgl}),
	the latter two are new.
	We have
	
	\begin{MainProp}[Proposition~\ref{Prop:ItcDirRidUnique}]
		\label{MainProp:IdTanRidDir}
		Let $ \IE_1 \sim \IE_2 $ be two equivalent pairs and $ x \in \Sing ( \IE_1 ) = \Sing ( \IE_2 ) $.
		Then 
		\begin{center}
			
			$ \ITC_x ( \IE_1 ) \sim \ITC_x ( \IE_2 )$,
			\hspace{7pt}
			$ \IDir_x ( \IE_1 ) \sim \IDir_x ( \IE_2 ) $, 
			\hspace{7pt}and
			\hspace{7pt}
			$ \IRid_x ( \IE_1 ) \sim \IRid_x ( \IE_2 ) $.
		\end{center}
	\end{MainProp}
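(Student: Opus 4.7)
The plan is to reduce each of the three equivalences to the already-known invariance of the underlying support (tangent cone, directrix, ridge) under $\sim$, and then to upgrade this set-theoretic invariance to an equivalence of pairs by exploiting the fact that the $\Diff$-closure of a pair is itself $\sim$-invariant.

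First I would unfold the definitions of $\ITC_x(\IE)$, $\IDir_x(\IE)$, and $\IRid_x(\IE)$ in local coordinates at $x$. Each of these is a pair living on $\Spec(\mathrm{gr}_{\maxIdeal_x}\cO_{Z,x})$, built from the initial forms of a suitable system of generators of $J$ with exponents inherited from $b$. Their supports are, respectively, the tangent cone, the directrix, and the ridge of $\IE$ at $x$, so the set-theoretic skeleton of the statement is already controlled by these three classical invariants.

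Next I would use the characterization of $\sim$ through test sequences: $\IE_1 \sim \IE_2$ if and only if after any composition of permissible blowups, products with $\IA^1$, and closed or open immersions, the singular loci of the transforms agree. Combined with the invariance of $\Dir_x$ under $\sim$ (already proved by Hironaka, at least in the perfect residue field case), this settles the set-theoretic side of the proposition; the analogous statements for the tangent cone and the ridge are either immediate from the definitions or follow from the relation between $\Dir_x$ and $\Rid_x$ recalled in Remark~\ref{Rk:DirRid}. The upgrade from sets to pairs then goes through the $\Diff$-closure: writing $\Diff^{<b}(J)$ for the ideal generated by all $D(f)$ with $f\in J$ and $\ord(D)<b$, the ideal of $\ITC_x(\IE)$ is generated, up to the exponent $b$, by the initial forms at $x$ of elements of $\Diff^{<b}(J)$, and the pairs $\IDir_x(\IE)$ and $\IRid_x(\IE)$ are obtained from these initial forms by projection onto their linear and additive parts respectively. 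Since the $\Diff$-closure of a pair is $\sim$-invariant, the equivalence $\IE_1 \sim \IE_2$ forces all three constructions to agree up to $\sim$.

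The main obstacle I expect is the case of $\IRid_x$ in positive and mixed characteristic, where the ridge is generated by additive but not necessarily linear polynomials and genuinely differs from the directrix. One has to show that a minimal family of additive generators of $\Rid_x$ can be extracted from $\Diff^{<b}(J)$ in a way compatible with Hironaka's trick of adjoining new variables, which requires a careful analysis of $p$-th power operations on initial forms and of how additive polynomials are recovered from the derivations acting on $J$. Once this compatibility is in place, the equivalence $\IRid_x(\IE_1) \sim \IRid_x(\IE_2)$ follows along the same lines as for $\ITC_x$ and $\IDir_x$.
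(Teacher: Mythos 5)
Your proposal has a genuine gap, and it also misses the key mechanism that makes the paper's proof work.

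The central difficulty of the proposition is not the ``set-theoretic side'' (which you dismiss as already controlled) but precisely the passage from the relation $\IE_1 \sim \IE_2$ on $Z$ to a relation between pairs living on the tangent space $T_x(Z)$. These are pairs on entirely different schemes: $\sim$ is tested by local sequences of blow-ups on $Z[t]$, while $\ITC_x$, $\IDir_x$, $\IRid_x$ live on $\Spec(\mathrm{gr}_x Z)$. You do not provide any device to transport an \LSB\ from one to the other. The paper does this by Hironaka's trick: it introduces an auxiliary variable $t$, blows up the origin $\alpha$ times in the $T$-chart, and observes (formula (\ref{niceform})) that after enough iterations the transform of any $f \in J$ becomes its leading form plus a tail divisible by $t^b$. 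Choosing $\alpha$ large enough ``purifies'' $\IE$ into a pair essentially equal to its tangent cone, so that an \LSB\ over $\resfield[W]$ permissible for $\ITC_x(\IE_1)$ but not for $\ITC_x(\IE_2)$ can be lifted to an \LSB\ over $V_0$ contradicting $\IE_1 \subset \IE_2$. Without this step, or an equivalent one, there is no bridge between the two classes of test sequences.

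Your proposed substitute --- the claim that ``the $\Diff$-closure of a pair is $\sim$-invariant'' and that $\ITC_x$, $\IDir_x$, $\IRid_x$ are recovered from $\Diff^{<b}(J)$ in a $\sim$-compatible way --- is not established and is not a consequence of the Diff Theorem as stated. Proposition \ref{Prop:Diff} only says $(J,b) \sim (J,b) \cap (\cD J, b-m)$, i.e.\ a pair is equivalent to its own differential saturation; it does not say that two equivalent pairs have the same differential saturation, nor that their initial forms at $x$ generate equivalent pairs on $T_x(Z)$. In fact, the phenomenon exhibited in Example \ref{Ex:PolyNotUnique} (equivalent pairs with genuinely different ideals and polyhedra) shows one must be careful about which data extracted from $J$ is $\sim$-invariant. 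There is also a near-circularity: you appeal to the invariance of $\Dir_x$ under $\sim$ (Hironaka, perfect residue field case) to justify the set-theoretic claim for the directrix, but the proposition you are proving is precisely the generalization of that result to $\Spec(\IZ)$, and the paper deduces (ii) from (i) rather than the other way around. Finally, for the ridge you explicitly acknowledge but do not close the gap about extracting additive generators compatibly with the adjunction of new variables; in the paper this case in fact reduces formally to (i), since by definition of the ridge any \LSB\ permissible for $\IRid_x(\IE_i)$ is permissible for $\ITC_x(\IE_i)$, so the ridge case needs no separate analysis of $p$-th powers.
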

	\noindent
	This means the idealistic tangent cone, the idealistic directrix and the idealistic ridge are actually invariants of the idealistic exponent $ \IE_\sim $.
	
	\smallskip
	
	For fixed data $ ( \IE, u, y ) $ we define its associated polyhedron $ \poly \IE uy $ and investigate its properties.
	Unfortunately, they behave badly under the equivalence relation $ \sim $;
	in Example \ref{Ex:PolyNotUnique} we show that there exist equivalent pairs whose associated polyhedra are not equal.
	Thus the polyhedron is not an invariant of the idealistic exponent $ \IE_\sim $.
	Nevertheless there is some hope:
	We can recover a numerical invariant 
	(which is closely related to the so called coefficient ideal)
	from the associated polyhedron which is an invariant of $ \IE_\sim $ (Proposition \ref{Prop:deltaEuyInv}).

	\smallskip
	
	Imitating the construction of Hironaka's characteristic polyhedron for a singularity we define the characteristic polyhedron $ \cpoly \IE u $ of a pair $ \IE $ \wrt a certain set of elements $ ( u ) = ( u_1, \ldots, u_e ) $ in $ R $ that can be extended to a regular system of parameters of $ R $.
	More precisely, $ \cpoly \IE u $ is the intersection over all possible choices for $ ( \hy )$ such that $ ( u, \hy ) $ is a regular system of parameters for $ \hR$ (the completion of $ R$ \wrt the unique maximal ideal),
	$$
	\cpoly \IE u := \bigcap_{(\hy)} \poly \IE u\hy.
	$$
	An interesting question is then if there is a good choice for $ ( \hy ) $ such that $ \poly \IE u \hy = \cpoly \IE u $ and whether we can choose such an $ (\hy ) $ in $ R $ instead of $ \hR $.
	While the first part turns out to be false in full generality (Example~\ref{Ex:AssumpThmNecessary}),
	we give an affirmative answer assuming a minor technical condition which is typically true, when studying a problem in the context of resolution of singularities:  
	Given $ \IE= (J,b) $ on $ R $, we define 
	$ \IE' := (J', b) $ to be the pair on $ R' := R/ \langle u \rangle $ induced by $ J' := J \cdot R' $.
	Denote by $ x' $ the point in $ \Spec(R') $ defined by the ideal generated by images of $ (y) $ in $ R' $.
	In particular, we can speak of the directrix $ \Dir_{x'} (\IE' )$.
	
	\begin{MainThm}[Theorem~\ref{Thm:BerndHiro}]
		\label{MainThm:CharPolyAchieved}		
		Let $ \IE = (J,b) $ be a pair on a regular local ring $ R $ and let $ ( u, y ) = ( u_1, \ldots, u_e; y_1, \ldots, y_r ) $  be a regular system of parameters for $ R $ such that the initial forms of $ ( y ) $ define the directrix of $ \Dir_{x'}( \IE') $.
		
		There exist elements $ ( y^* ) = ( y^*_1, \ldots, y^*_r ) $ in $ \hR $ such that $ ( u, y^* ) $ is a regular system of parameters for $ \hR $, $ ( y^* ) $ yields $ \Dir_{x'}( \IE') $, and 
		\[ 
		\poly{ \IE }{ u }{ y^* } = \cpoly{ \IE }{ u }
		\] 
	\end{MainThm}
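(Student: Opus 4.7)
The plan is to model the proof on Hironaka's argument for characteristic polyhedra of ideals (\cite{HiroCharPoly}, \cite{HiroBowdoin}) via iterated \emph{vertex preparation}, transposed into the idealistic setting. First I would define, for each vertex $v$ of $\poly{\IE}{u}{y}$, what it means for $v$ to be \emph{solvable}: there exist constants $c_{i,\alpha}$ giving a translation $y_i \mapsto y_i - \sum c_{i,\alpha}\, u^\alpha$ (the exponents $\alpha$ tied to $v$ in the usual way) such that $v$ no longer belongs to $\poly{\IE}{u}{y'}$ for the new system $y'$. Because the corrections lie in $\maxIdeal$ with positive weighted order, $(u,y')$ remains a \RSP\ and its leading forms still generate $\Dir_x(\IE)$; thus the hypothesis that ``$(y)$ yields $\Dir_x(\IE)$'' is preserved under solving.

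Next I would construct $(y^*)$ as the limit of an iterative preparation process. Well-order the potentially solvable vertices by increasing first coordinate, breaking ties lexicographically; the rationality of the polyhedron forces only finitely many vertices to lie in any bounded region, so this is a proper order. Starting from $(y^{(0)}) := (y)$ I solve vertices one at a time; a direct combinatorial check, analogous to \cite{HiroCharPoly}, shows that solving $v$ introduces only Newton exponents $\ge v$ componentwise, so previously solved vertices remain solved and $\poly{\IE}{u}{y^{(n+1)}} \subseteq \poly{\IE}{u}{y^{(n)}}$. Because the $\maxIdeal$-adic orders of the successive corrections $y^{(n+1)} - y^{(n)}$ tend to infinity, the sequence is Cauchy in $\hR$ and converges to a system $(y^*)$, which by completeness of $\hR$ is again a \RSP\ yielding $\Dir_x(\IE)$.

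To conclude, I would verify $\poly{\IE}{u}{y^*} = \cpoly{\IE}{u}$. The inclusion $\cpoly{\IE}{u} \subseteq \poly{\IE}{u}{y^*}$ is immediate from the definition of the characteristic polyhedron as an intersection. For the reverse, every vertex of $\poly{\IE}{u}{y^*}$ is unsolvable by construction, and an auxiliary lemma (to be proved in parallel) would identify unsolvable vertices as precisely those surviving every admissible change of variables, hence lying in $\poly{\IE}{u}{z}$ for every competitor $z$ in the intersection defining $\cpoly{\IE}{u}$. The main obstacle is the convergence and coherence of the iterative process: one must combine countably many vertex solutions into a single change of variables in $\hR$ while guaranteeing that each step neither undoes previous ones nor introduces fresh vertices outside the current polyhedron. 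The Newton-exponent bookkeeping that enforces this, and which forces one to pass from $R$ to $\hR$, is the technical heart of the argument.
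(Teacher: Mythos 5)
Your plan is genuinely different from the paper's and it carries a gap that is not a small technical detail but the heart of the matter.

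The paper does \emph{not} redo vertex preparation for the idealistic polyhedron from scratch. Instead it reduces to Hironaka's Theorem (cited as Theorem \ref{Thm:Hironaka}) applied to the subideal $I = \langle g \rangle$ generated by exactly those members of a $(u)$-standard basis having $n_{(u)}$-order $b$. Hironaka's theorem delivers $(\hg,\hy)$ with $\poly{\hg}{u}{\hy} = \cpoly{I}{u}$; the remaining work is to show that this $\hy$ also realizes $\cpoly{\IE}{u}$ for the full pair. That is done by a three-step analysis of an arbitrary coordinate change $y \mapsto L(z) + H(u,z) + Q(u)$, in which steps (1) and (2) are shown to leave $\poly{\IE}{u}{\cdot}$ unchanged, and step (3) is controlled by the minimality of $\poly{\hg}{u}{\hy}$ already guaranteed by Hironaka. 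You, by contrast, propose to rebuild the entire well-ordered vertex-solving machinery directly for the idealistic polyhedron. That is a legitimate alternative strategy in outline, but as written it has two serious holes.

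First, the step you defer to an ``auxiliary lemma (to be proved in parallel)'' — that unsolvable vertices are exactly those lying in $\poly{\IE}{u}{z}$ for \emph{every} competitor $z$ — is precisely the nontrivial content of Hironaka's theorem, not an easily supplied side fact. In the ideal setting one denominates by $n_i - |B|$, where $n_i = n_{(u)}(f_i)$ may vary with $i$, whereas $\poly{\IE}{u}{y}$ always denominates by $b - |B|$ and includes contributions from generators whose order exceeds $b$; these contributions have no analogue in Hironaka's setup and your Newton-exponent bookkeeping needs to account for them explicitly. Second, you never use the hypothesis that $(y)$ yields the \emph{whole} directrix; but Example \ref{Ex:AssumpThmNecessary} in the paper shows the theorem is false without it, so any correct proof must deploy it somewhere. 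In the paper's proof it enters exactly at the critical moment: when a translation $y_j \mapsto z_j + D_{C,j}u^C$ threatens to create a new vertex $C$, a generator with a monomial having $|B| = b$ must exist to witness $C$ in $\poly{f^{(b)}}{u}{z}$, and this existence is precisely what the whole-directrix hypothesis guarantees. Your solvability/unsolvability framework would need to locate the same mechanism; until it does, the argument does not close.
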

	
	Moreover, later we can give a simple proof for the following result: if $ V ( y ) $ has maximal contact then the polyhedron $ \poly \IE uy $  associated to a pair $ \IE $ is independent of the choice of the maximal contact variables $ ( y ) $ (Proposition \ref{Prop:maxSamePoly}).
	
	Still the characteristic polyhedra of pairs do not behave well under $ \sim $.
	Thus, at the end of section \ref{sec:3}, we discuss the concept of a unique characteristic polyhedron of an idealistic exponent $ \IE_\sim $.
	Further, we sketch in Remark \ref{Rk:qh_setting} how the notion of characteristic polyhedra can be extended to the quasi-homogeneous situation, i.e., where we put certain weights on each element of the regular system of parameters of $ R $.
	
	For our purposes it is not crucial that we obtain a unique characteristic polyhedron for an idealistic exponent. 
	The important point is that the information which we obtain from the polyhedra are invariants of the idealistic exponent $ \IE_\sim $.
	In this context we prove for 
	$$ 
	\delta_x ( \IE, u )  :=  
	\inf \{ |v| = v_1 + \ldots + v_e \mid v \in \cpoly \IE u \} \in \frac{1}{b!} \, \IZ_{>1} \cup \{ \infty \}
	$$ 
	\textcolor{black}{the following result.}
	(Note that $ \delta_x ( \IE, u ) = \infty $ if and only if $ \cpoly \IE u = \varnothing $.)
	
	\begin{MainThm}[Theorem~\ref{Thm:deltaINV}]
		\label{MainThm:deltaINV}
		The number $ \delta_x ( \IE, u ) $ does not depend on $ ( y ) $ and is invariant under the equivalence relation $ \sim $.
		Therefore $ \delta_x ( \IE, u ) $ is an invariant of the idealistic exponent $ \IE_\sim $ and $ ( u ) $.
	\end{MainThm}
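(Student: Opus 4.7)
The plan is to reduce invariance of $\delta_x(\IE, u)$ to the invariance of the coefficient pair (Proposition \ref{MainProp:IdCoeffExpIndp}) combined with the information that Proposition \ref{Prop:deltaEuyInv} extracts from the polyhedron.

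\textbf{Independence of $(y)$.} This part is essentially tautological, since by definition $\cpoly{\IE}{u} = \bigcap_{(y)} \poly{\IE}{u}{y}$ involves no specific choice. However, for the next step I will need the stronger fact that the minimum is realized by a concrete system: by Theorem \ref{MainThm:CharPolyAchieved} there exists $(y^*)$ with $\poly{\IE}{u}{y^*} = \cpoly{\IE}{u}$. Combined with the trivial inequality $\delta_x(\cpoly{\IE}{u}) \geq \delta_x(\poly{\IE}{u}{y})$ coming from the inclusion $\cpoly{\IE}{u} \subseteq \poly{\IE}{u}{y}$, this will yield
$$
\delta_x(\IE, u) \,=\, \max_{(y)} \delta_x(\poly{\IE}{u}{y}),
$$
where $(y)$ ranges over the systems entering the intersection defining $\cpoly{\IE}{u}$.

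\textbf{Invariance under $\sim$.} Suppose $\IE_1 \sim \IE_2$. By Proposition \ref{MainProp:IdTanRidDir} one has $\Dir_x(\IE_1) = \Dir_x(\IE_2)$, so both pairs admit the same family of $(y)$'s used to form their respective characteristic polyhedra. For any such $(y)$, Proposition \ref{MainProp:IdCoeffExpIndp}(1) gives $\ID_x(\IE_1, u, y) \sim \ID_x(\IE_2, u, y)$, and Proposition \ref{Prop:deltaEuyInv} shows that $\delta_x(\poly{\IE}{u}{y})$ is recoverable from the order of this coefficient pair, hence is an invariant of $\IE_\sim$. Therefore
$$
\delta_x(\poly{\IE_1}{u}{y}) \,=\, \delta_x(\poly{\IE_2}{u}{y}) \quad \text{for every admissible } (y),
$$
and taking the maximum on both sides, using the identity from the first step, yields $\delta_x(\IE_1, u) = \delta_x(\IE_2, u)$.

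The decisive step -- and in my view the only real obstacle -- is the identification $\delta_x(\cpoly{\IE}{u}) = \max_{(y)} \delta_x(\poly{\IE}{u}{y})$: without the attainment guaranteed by Theorem \ref{MainThm:CharPolyAchieved} we would only know that $\sup_{(y)} \delta_x(\poly{\IE}{u}{y}) \leq \delta_x(\cpoly{\IE}{u})$, with no guarantee of equality, and the argument would break down because the supremum of a family of $\sim$-invariants is itself an invariant only if it is actually realized by some common member. This is precisely where the constructive content of the preceding theorem is needed; the rest is straightforward bookkeeping, relying on the directrix being an $\sim$-invariant so that the two families of admissible $(y)$ coincide.
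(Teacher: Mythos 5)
Your proof is correct and follows essentially the same route as the paper's: use Theorem~\ref{MainThm:CharPolyAchieved} to identify $\delta_x(\cpoly{\IE}{u})$ with an attained value $\delta_x(\poly{\IE}{u}{y^*})$, then invoke Proposition~\ref{Prop:deltaEuyInv} for $\sim$-invariance at fixed $(y)$. Two small remarks. First, routing through Proposition~\ref{MainProp:IdCoeffExpIndp}(1) plus Lemma~\ref{Lem_d_xPolyhedral} is redundant: Proposition~\ref{Prop:deltaEuyInv}(1) already states $\delta_x(\Delta(\IE_1,u,y)) = \delta_x(\Delta(\IE_2,u,y))$ directly, and its proof is exactly the unpacking you re-derive. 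Second, your closing diagnosis of why attainment is indispensable is slightly off: the supremum of a family of $\sim$-invariants is \emph{always} itself a $\sim$-invariant, attained or not, since the two families are termwise equal. What Theorem~\ref{MainThm:CharPolyAchieved} actually buys you is not the invariance of the supremum but the identification $\delta_x(\cpoly{\IE}{u}) = \sup_{(y)} \delta_x(\poly{\IE}{u}{y})$; without it you would only have $\delta_x(\cpoly{\IE}{u}) \geq \sup_{(y)}$, so the (invariant) supremum might be a strictly smaller number than the quantity you care about, and no conclusion about $\delta_x(\cpoly{\IE}{u})$ would follow.
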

	
	In the situation over fields of characteristic zero this will be the essential ingredient to deduce the connection between the invariant of Bierstone and Milman and the characteristic polyhedra of idealistic exponents in \cite{BerndBM}.

	\smallskip

	\color{black} 
	Analogous to 	
	$ \delta_x (\IE;u) $,
	we may define
	$ \delta_x (\IE;u;y) $ 
	using $ \poly{\IE}{u}{y} $ instead of $ \cpoly{\IE}{u} $,
	where $ ( u, y) $ is any regular system of parameters. 
	In this situation, we introduce another pair 
	$ ( \ini_\delta (\IE) , b ) $ for a given pair $ \IE = (J,b) $,
	which is a refinement of the idealistic tangent cone $ \ITC_x ( \IE) $
	(Definition~\ref{Def:in_delta_E}).
	We can prove 
	(using Proposition~\ref{Prop:deltaEuyInv})
	that this pair is also an invariant of the idealistic exponent.

	\begin{MainThm}[Theorems~\ref{Thm:deltaINV_2}]
		\label{MainThm:deltaINV_2} 
	Let $ \IE_1 = ( J_1, b_1) \sim \IE_2 = (J_2, b_2) $ be two equivalent pairs and $ x \in \Sing (\IE_1 ) $.	
	Let $ (u,y) = (u_1, \ldots, u_d; y_1, \ldots, y_s) $ be a regular system of parameters for $ R  $.
	If we define $ \delta := \delta_x ( \IE_1; u; y ) $, 
	then we have  
	\[ 
	(\ini_{\delta}(\IE_1),b_1) \sim (\ini_{\delta}(\IE_2),b_2)
	.
	\]
\end{MainThm}
	
	\color{black} 
	
	\smallskip
	
	Another interesting tool for the study of singularities is the coefficient ideal \wrt $ V( y ) $, where $ ( y ) = (y_1, \ldots, y_r ) $ is part of a regular system of parameters $ ( u, y ) = ( u_1, \ldots, u_e, y ) $ of the local ring $ R = \cO_{Z,x} $ at a singular point $ x $.
	We define its counterpart in the language of idealistic exponents, called \emph{coefficient pairs} $ \ID_x ( \IE, u ,y ) $, where $ x \in \Sing ( \IE ) $. 
	So far no special assumptions on the system $ ( y ) $ are made except for being part of a regular system of parameters
	We then can show
	
	\begin{MainThm}[Theorem~\ref{Thm:IDequiv} and Proposition~\ref{Prop:IDEz=IDEy} ]
		\label{MainProp:IdCoeffExpIndp}
		Let $ \IE_1 \sim \IE_2 $ be two equivalent pairs on a regular local ring $ R  $ and let $ ( u, y ) $ be a regular system of parameters for $ R $.
		\begin{enumerate}
			\item[(1)]
			Then the coefficient pairs \wrt the same $ V(y) $ are equivalent,
			$$
			\ID_x ( \IE_1, u ,y ) \sim \ID_x ( \IE_2, u ,y ) .
			$$
			\item[(2)]	
			If we have two choices $ V ( y ) $ and $ V ( z ) $ for a fixed system $ ( u ) $ and a fixed pair $ \IE $ such that $ \IE \cap ( y, 1 ) \sim \IE \cap ( z, 1 ) $, then the coefficient pairs are also equivalent,
			$$
			\ID_x ( \IE, u ,y ) \sim \ID_x ( \IE, u , z) .
			$$
		\end{enumerate}
	\end{MainThm}
	\noindent
	In particular, $ \ID_x ( \IE, u ,y )_\sim $ is an invariant of the idealistic exponent $ \IE_\sim $.

	\smallskip
	
	\noindent 
	{\em Remark on notation:}
	Throughout the article, we use multi-index notation, e.g., we abbreviate $y^B = y_1^{B_1} \cdots y_r^{B_r} $ 
	and $ \frac{B}{d} = (\frac{B_1}{d}, \ldots, \frac{B_r}{d}) $,
	for $ ( y ) = (y_1, \ldots, y_r ) $, $ B = (B_1, \ldots, B_r) \in \IZ_{ \geq 0 }^r $, $ d \in \IZ_+ $.
	Further, we abuse notation by writing $ V(I) $ to denote the scheme $ \Spec(R/I) $, where $ I \subset R $ is an ideal in a regular local ring $ R $.
	
	\smallskip 
	
	\noindent 
	\emph{Acknowledgement:} 
	The results presented here are part of my thesis \cite{BerndThesis}.
	I thank my advisors Uwe Jannsen and Vincent Cossart for countless discussions on the topic and all their support.
	\red{Furthermore, I thank the anonymous referee for many helpful comments improving the article and for suggesting to emphasize more the role of the ridge.}
	I'm grateful to the Laboratoire de Math\'emathiques Versailles for their hospitality during numerous visits.

	%
	%
	%
	%
	%
	%
	%
	%
	%
	%
	%
	%
	%
	%
	
	\section{Pairs and idealistic exponents}
	
	Originally, Hironaka introduced idealistic exponents on regular schemes which are of finite type over a perfect field, see \cite{HiroIdExp, HiroKorean}.
	Later he extended this notion to idealistic exponents on excellent regular Noetherian schemes which are not necessarily of finite type over a base field, see \cite{HiroThreeKey}. 
	In \cite{BerndThesis} the author recalled the definitions and worked out the proofs of the first properties for the case over arbitrary fields in detail. 
	In this article we focus on idealistic exponents on a regular irreducible schemes of finite type over $ \IZ $ which is sufficient for our purposes.
	(We follow the usual convention and write \emph{over $ \IZ $} instead of \emph{over $ \Spec ( \IZ ) $}).
	
	\smallskip
	
	Let $ Z $ be a regular irreducible scheme of finite type over $ \IZ $.
	
	\begin{Def}
		\label{Def:idexp}
		A \emph{pair} $ \IE = (J,b) $ on $ Z $ is a pair consisting of a quasi-coherent ideal sheaf $ J \subset \cO_Z $ and a positive integer $ b \in \IZ_+ $.
		We define its order at a point $ x \in Z $ (not necessarily closed) 
		as 
		$$ 
		\ord_x (\IE) := \left\{ \begin{tabular}{cl}
		$ \frac{\ord_x (J) }{b} $ 	& , if $ \ord_x (J)  \geq b $ and \\[8pt]
		$ 0 $				& , otherwise,
		\end{tabular} \right.
		$$ 
		where $ \ord_x (J) = \sup \{ d \in \IZ_{ \geq 0 } \cup \{\infty\} \mid J_x \subseteq \maxIdeal_x^d \} $ (and $ \maxIdeal_x $ denotes the maximal ideal in the local ring at $ x $).
		Further, we define the singular locus (or (co)support) of $ \IE $ as 
		$$
		\Sing(\IE):= \{ x \in Z \mid \ord_x ( J ) \geq b \}.
		$$
	\end{Def}
	
	\smallskip
	
	Since the order is a upper-semi continuous function
	(see \cite[Chapter III \S 3 Corollary 1 (p.220)]{Hiro64}), 
	$ \Sing (\IE) $ is closed.
	
	We denote by $ X \subseteq Z $ the closed subscheme corresponding to $ J $.
	Further, if $ Z = \Spec (R) $ is affine, then we also say $ \IE $ is a pair on $ R $.
	
	\begin{Def}
		\label{Def:perm}
		Let $ \IE = (J, b) $ be a pair on $ Z $.
		A blowup $ \pi: Z' \to Z $ with center $ D $ is called \emph{permissible} for $ \IE $, if $ D $ is regular and $ D \subseteq \Sing (\IE) $.
		The transform of $ \IE $ is then given by $ \IE' = (J', b) $, where $ J' $ is defined via $ J \cO_{Z'} = J' H^b $, where $ H $ denotes the ideal sheaf of the exceptional divisor.
	\end{Def}
	
	\smallskip
	
	In other literature there exist different notions of permissible centers, see for example \cite{CJS}.
	There these are regular subschemes $ D \subset X $ such that additionally $ X $ is normally flat along $ D $ (\cite[Definition 2.1]{CJS}).

	\begin{Def}
		We define a \emph{local sequence of regular blowups (or {\LSB} for short)} over $ Z $ as a sequence of the form
		\begin{equation}
		\label{eq:LSB}
		\begin{array}{ccccccccc}
		Z=Z_0 \supset U_0		& \stackrel{\pi_1}{\leftarrow} &	Z_1 \supset U_1	 &	\ldots	& \stackrel{\pi_{\ell-1}}{\leftarrow} &	Z_{\ell-1}  \supset U_{\ell-1}	& \stackrel{\pi_\ell}{\leftarrow} 	&	Z_\ell	\\[5pt]
		\hspace{45pt}	\cup	&&	\hspace{25pt}	\cup	&			&&	\hspace{35pt}	\cup	\\[5pt]
		\hspace{45pt}	D_0		&&	\hspace{25pt}	D_1		&	\ldots	&&	\hspace{35pt}	D_{\ell-1}	
		\end{array}
		\end{equation}
		%
		where $ \ell \in \IZ_+ \cup \{ \infty\} $, each $ U_i \subset Z_i $ is an open subscheme, $ D_i \subset U_i $ is a regular closed subscheme and $ \pi_{i+1}: Z_{i+1} \to U_i $ is the blowup with center $ D_i $, $ 0 \leq i \leq \ell - 1 $. 
	\end{Def}
	
	\begin{Rk}
		\label{Rk:permLSB}
		Let $ \IE = (J,b) $ be a pair on $ Z $ and consider a {\LSB} as in \eqref{eq:LSB}.
		In Definition \ref{Def:perm} we have introduced when a blowup is permissible for $ \IE $ and further we have defined the transform of $ \IE $ under such a blowup.
		Denote by $ \IE_i $ the transform of $ \IE_0 := \IE $ in $ Z_i $, for $ 0 \leq i \leq \ell -1 $.
		Then we say that the {\LSB} (\ref{eq:LSB}) is permissible for $ \IE $ if each blowup $ \pi_{i + 1} $ is permissible for $ \IE_{ i  } $, for $ 0 \leq i \leq \ell - 1 $.
	\end{Rk}
	
	\smallskip
	
	Let $ (t) = (t_1, \ldots, t_a) $ be a finite system of independent variables.
	Then we use the notation
	$$
	Z[t] := Z \times_{\mbox{\footnotesize Spec} (\IZ)} \Spec (\IZ[t]).
	$$
	We consider the pair $ \IE[t] = (J[t], b) $, where $ J[t] := J \cO_{Z[t]} $ (\wrt the canonical projection).

	\begin{Def}
		\label{Def:equiv}
		Let $ \IE_1 = (J_1, b_1) $ and $ \IE_2 = (J_2, b_2) $ be two pairs on $ Z $.
		Then we define 
		$$
		\IE_1 \subset \IE_2
		$$
		if the following condition holds:
		\begin{equation}
		\label{eq:equiv}
		\parbox{9.8cm}{
			Let $ (t) = ( t_1, \ldots, t_a ) $ be an arbitrary finite system of independent variables and let $ \IE_i[t] = (J_i[t], b_i) $, $ i \in \{ 1,2 \} $.
			If any {\LSB} over $ Z[t] $ is permissible for $ \IE_1[t] $, then it is also permissible for $ \IE_2[t] $. 
		}	
		\end{equation}		
		We say $ \IE_1 $ and $ \IE_2 $ are \emph{equivalent},
		$$
		\IE_1 \sim \IE_2,
		$$
		if both $ \IE_1 \subset \IE_2 $ and $ \IE_1 \supset \IE_2 $. 
		An {\em idealistic exponent} $ \IE_\sim $ is the equivalence class of a pair $ \IE $. 
		
		Set $ c := \mathrm{lcm} (b_1, b_2) $ and $ c_i := \frac{c}{b_i} $, $ i \in \{ 1, 2 \} $. 
		We define the intersection of two pairs as the following pair on $ Z $,
		$$ 
		\IE_1 \cap \IE_2 := ( J_1^{c_1} + J_2^{c_2} , c).
		$$ 
	\end{Def}

	In other literature pairs are sometimes also called idealistic exponents (e.g. \cite{HiroThreeKey}). 
	In order to avoid confusion when coming to results and the dependence on the choice of a representative of the equivalence class, we use the original terminology \cite{HiroIdExp} of pairs and idealistic exponents.
	
	\smallskip
	
	Note that a {\LSB} over $ Z[t] $ is permissible for $ (\IE_1 \cap \IE_2)[t] $ if and only if it is permissible for $ \IE_1[t] $ and $ \IE_2 [t] $.
	We have $ \Sing ( \IE_1 \cap \IE_2 ) = \Sing ( \IE_1 ) \cap \Sing ( \IE_2 ) $ 
	and $ \ord_x ( \IE_1 \cap \IE_2 ) = \min  \{  \ord_x ( \IE_1 ),  \ord_x ( \IE_2 ) \} $, for  $ x \in \Sing ( \IE_1 \cap \IE_2 ) $.
	
	\smallskip

	The following basic properties of pairs hold:
	
	\begin{Lem}
		\label{Lem:Basic}
		Let $ \IE = (J,b) $ and $ \IE_i = (J_i,b_i) $, $ i \in \{1, 2, 3, 4 \}$, be pairs on $ Z $.
		\begin{enumerate}
			\item 
			For every $ a \in \IZ_+ $, we have $ (J^a, a b) \sim (J,b) $.
			\item 
			Let $ m \in \IZ_+ $ with $ b_1 \mid m $ and $ b_2 \mid m $.
			Then  
			$$
			(J_1,b_1) \cap (J_2,b_2) \sim \left(J_1^{\frac{m}{b_1}} + J_2^{\frac{m}{b_2}}, m\right).
			$$
			In particular, if $ J_1 \subseteq J_2 $ and $ b_1 = b_2 =: b$,
			then 
			\[
				(J_2, b) = (J_1 + J_2, b ) \sim (J_1, b) \cap (J_2, b) \subset (J_1, b ).
			\] 
			
			\item 
			We always have $ (J_1 J_2, b_1+ b_2) \supset (J_1,b_1) \cap (J_2,b_2) $.
			If we have additionally $ \Sing (J_i, b_i + 1) = \varnothing $, for $ i \in \{ 1, 2\} $, then the inclusion becomes an equivalence.
			\item 
			If $ \IE_1 \subset \IE_2 $ and $ \IE_3 \subset \IE_4 $, then $ \IE_1 \cap \IE_3 \subset \IE_2 \cap \IE_4 $.
			In particular $ \IE_1 \sim \IE_2 $ implies by symmetry $ \IE_1 \cap \IE_3 \sim \IE_2 \cap \IE_3 $.
			\item 
			Let $ \pi : Z' \to Z $ be a permissible blowup for $ \IE_1 $ and $ \IE_2 $.
			Then $ (\IE_1 \cap \IE_ 2)' \sim \IE'_1 \cap \IE'_2 $.
		\end{enumerate}
	\end{Lem}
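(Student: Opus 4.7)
My plan is to unravel the definitions of $\subset$ and $\sim$ in terms of local sequences of regular blow-ups and to verify, operation by operation, two compatibilities: first, that the singular locus $\Sing$ transforms as expected; second, that the transform of a combined pair is built from the transforms of its constituents in the expected way. Because every ideal operation appearing in the statement (power, sum, product) commutes with the base change $Z \mapsto Z[t]$, the auxiliary indeterminates in Definition \ref{Def:equiv} introduce no additional difficulty, and it suffices to analyze LSBs directly over $Z$, step by step.

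For (i) the key identities are $\ord_x(J^a) = a\,\ord_x(J)$, which forces $\Sing(J^a, ab) = \Sing(J,b)$, together with the transform formula $(J\cO_{Z'})^a = (J')^a H^{ab}$, showing that the transform of $(J^a, ab)$ is $((J')^a, ab)$; induction on the length of the LSB finishes the argument. For (ii) I would first use (i) (and (iv), which I prove independently) to reduce to $b_1 = b_2 = m$, and then observe that $\ord_x(J_1+J_2) = \min\{\ord_x(J_1), \ord_x(J_2)\}$ identifies the two singular loci while $(J_1+J_2)\cO_{Z'} = (J_1' + J_2')H^m$ identifies the transforms. Part (iv) is essentially tautological: an LSB permissible for $\IE_1 \cap \IE_3$ is, by definition, simultaneously permissible for $\IE_1$ and $\IE_3$, and by hypothesis therefore also for $\IE_2$ and $\IE_4$, hence for $\IE_2 \cap \IE_4$.

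Part (iii) is the most delicate. The forward inclusion rests on additivity of the order, $\ord_x(J_1J_2) = \ord_x(J_1) + \ord_x(J_2)$, and the factorization $(J_1J_2)\cO_{Z'} = (J_1'J_2')H^{b_1+b_2}$. For the converse under the non-degeneracy hypothesis $\Sing(J_i, b_i+1) = \emptyset$ for $i \in \{1,2\}$, the key observation is: if $\ord_x(J_1J_2) \geq b_1+b_2$ but $\ord_x(J_1) < b_1$, then the integer $\ord_x(J_2) \geq b_2+1$ would contradict the hypothesis, so $\ord_x(J_i) \geq b_i$ for both $i$. The main obstacle is that the non-degeneracy hypothesis must persist throughout an arbitrary LSB. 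I plan to verify this step by step, using at each blow-up the standard inequality $\ord_{x'}(J_i') \leq \ord_x(J_i)$ at points $x'$ in the exceptional divisor above a permissible center $x$ of $(J_i, b_i)$ (off the exceptional divisor the property is preserved trivially by the local isomorphism), and then to conclude by induction on the length.

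For (v) I would exploit (ii) to replace $\IE_1 \cap \IE_2$, up to equivalence, by the single pair $(J_1^{m/b_1} + J_2^{m/b_2}, m)$ for a common multiple $m$ of $b_1$ and $b_2$. Its controlled transform under the jointly permissible blow-up $\pi$ equals $((J_1^{m/b_1})' + (J_2^{m/b_2})', m)$ by the sum-compatibility seen in (ii), and by the power-compatibility from (i) this is $((J_1')^{m/b_1} + (J_2')^{m/b_2}, m)$, which another application of (ii) identifies with $\IE_1' \cap \IE_2'$. The one small extra point to justify is that $\sim$ is preserved by taking transforms under a common permissible blow-up, which follows from the definition since any LSB over $Z'$ corresponds to an LSB over $Z$ by prepending $\pi$.
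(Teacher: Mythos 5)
The paper disposes of this lemma in one line, referring the reader to Lemma 1.1.8 of the author's thesis, so there is no detailed argument in the paper to compare against. Your proposal is a correct and complete realization of the ``unravel the definitions'' strategy the paper gestures at. Parts (i), (ii) and (iv) go through exactly as you say: the order identities $\ord_x(J^a)=a\,\ord_x(J)$ and $\ord_x(J_1+J_2)=\min\{\ord_x(J_1),\ord_x(J_2)\}$ in a regular local ring identify the singular loci, the transform formulas $J^a\cO_{Z'}=(J')^aH^{ab}$ and $(J_1+J_2)\cO_{Z'}=(J_1'+J_2')H^m$ identify the controlled transforms, and (iv) is tautological once one recalls that $\IE_1\cap\IE_3\sim\IE$ is \emph{defined} by the requirement that an LSB be permissible for $\IE[t]$ iff for both $\IE_1[t]$ and $\IE_3[t]$.

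For (iii) you correctly isolate the genuine content. The forward inclusion $(J_1J_2,b_1+b_2)\supset(J_1,b_1)\cap(J_2,b_2)$ rests on $\ord_x(J_1J_2)=\ord_x(J_1)+\ord_x(J_2)$ (valid since $\mathrm{gr}_{\maxIdeal}(R)$ is a domain) plus the factorization of the controlled transform; the reverse inclusion under $\Sing(J_i,b_i+1)=\emptyset$ needs, and you supply, the bookkeeping at the generic points of the center ($\ord_\eta(J_1)+\ord_\eta(J_2)\geq b_1+b_2$ together with $\ord_\eta(J_i)\leq b_i$ forces $\ord_\eta(J_i)=b_i$) \emph{and} the persistence of the non-degeneracy hypothesis along the LSB. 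That persistence is not a tautology: it is Hironaka's theorem on non-increase of order under permissible blow-ups, and you are right to flag it as the key ingredient. One cosmetic remark: the non-degeneracy hypothesis must also be checked on $Z[t]$ before the first blow-up; this is immediate, since $\ord_{(x,a)}(J_i[t])=\ord_x(J_i)$ for any point of $Z[t]$ over $x$, so $\Sing(J_i[t],b_i+1)=\emptyset$ as well. Your opening paragraph subsumes this under ``commutes with base change'', which is a slight category error (the non-degeneracy is an order condition, not an ideal operation), but the fact itself is true and easy.

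For (v), your reduction via (ii) to the single pair $(J_1^{m/b_1}+J_2^{m/b_2},m)$ and the observation that $\sim$ is preserved under taking transforms by a common permissible blow-up (prepend $\pi$ and note $Z'[t]\to Z[t]$ is again the blow-up of $D[t]$, since blow-up commutes with flat base change) closes the argument cleanly. Your proof is correct and is, in substance, what the paper's cited thesis lemma does.
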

	
	\begin{proof}
		All claims follow by looking at the definitions.
		For a more detailed proof see \cite[Lemma 1.1.8]{BerndThesis}, where we only have to replace the base field $ k $ by $ \IZ $.
	\end{proof}
	
	By (1) we may extend the definition of pairs $ (J,b) $ to $ b \in \IQ_+ $:
	Suppose $ b = \frac{c}{d} \in \IQ_+ $, where the greatest common divisor of $ c, d \in \IZ_+ $ is $ 1 $.
	Then we define $ (J,b) $ to be a pair with assigned number $ b \in \IQ_+ $ which is equivalent to $ ( J^d, b\, d ) $.
	
	\smallskip
	
	The following is an example, where the assumptions of the second part of (3) do not hold.
	For a strategy for constructing such examples see \cite[Remark 1.1.9]{BerndThesis}.
	
	\begin{Ex}
		Consider the ideals $ J_1 = \langle x^3 + y^5 \rangle $, $ J_2 = \langle x z^2 + y^3 \rangle \subset \IZ [ x, y, z ] $.
		Since the blowup with center $ V( x,y ) $ is permissible for $ (J_1 J_2, 4) $ but not for $(J_2,2) $, we have $ (J_1 J_2, 4) \not\subset (J_1,2) \cap (J_2,2) $.
	\end{Ex}
	
	Another important result is the following
	
	\begin{Prop}[Numerical Exponent Theorem; {\cite[Theorem 5.1]{HiroThreeKey}}]
		\label{Prop:NumExp}
		Let $ \IE_1 = (J_1, b_1) $ and $ \IE_2 = (J_2, b_2) $ be pairs on $ Z $.
		If $ \IE_1 \subset \IE_2 $, then 
		$$ 
		\ord_x (\IE_1) \leq \ord_x (\IE_2),  \hspace{10pt}\mbox{ for all } x \in Z .
		$$
		By symmetry $ \IE_1 \sim \IE_2 $ implies $ \ord_x (\IE_1) = \ord_x (\IE_2) $, for all $ x \in Z $.
		In particular, we get $ \Sing(\IE_1) = \Sing (\IE_2) $ if $ \IE_1 \sim \IE_2 $.
	\end{Prop}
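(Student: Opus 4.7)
\emph{Proof plan.} The final two assertions of the proposition are formal consequences of the inequality: applying it with the roles of $\IE_1$ and $\IE_2$ swapped yields equality of orders under $\sim$, and the identity $\Sing(\IE) = \{ x : \ord_x(\IE) \geq 1 \}$ then forces $\Sing(\IE_1) = \Sing(\IE_2)$. I therefore focus on showing that $\IE_1 \subset \IE_2$ implies $\ord_x(\IE_1) \leq \ord_x(\IE_2)$. If $\ord_x(\IE_1) = 0$ the inequality is trivial, so assume $x \in \Sing(\IE_1)$; by a standard localization/specialization argument using that $Z$ is Jacobson it suffices to treat a closed point $x$. For any such $x$, the one-step LSB blowing up $\{x\}$ is permissible for $\IE_1$, hence for $\IE_2$, giving $x \in \Sing(\IE_2)$. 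Using Lemma \ref{Lem:Basic}(i), replacing $\IE_i$ by $( J_i^{b_{3-i}}, b_1 b_2 )$ preserves both $\sim$ and all orders, so WLOG $b_1 = b_2 = b$ and it remains to prove $\ord_x(J_1) \leq \ord_x(J_2)$.

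The core of the proof is the construction of a \emph{test LSB}: for every rational $p/q < \ord_x(\IE_1)$, I want to produce a LSB over some $Z[t]$ that is permissible for $\IE_1[t]$ and whose permissibility for $\IE_2[t]$ implies $\ord_x(\IE_2) \geq p/q$. Introduce new variables $t = (t_1, \ldots, t_a)$ and take the closed point $x' = (x, 0) \in Z[t]$; the blow-up at $x'$ is permissible for $\IE_1[t]$ because $\ord_{x'}(J_1[t]) = \ord_x(J_1) \geq b$. In a suitable affine chart of the exceptional divisor one locates a closed point $\xi_1$ at which the controlled transform of $J_1[t]$ has maximal order $\ord_x(J_1) - b$, and blows it up. Iterating produces a sequence of point blow-ups of length $\lfloor \ord_x(J_1)/b \rfloor$ permissible for $\IE_1[t]$, detecting only the integer part of the order. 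To access the fractional part one combines this with the scaling of Lemma \ref{Lem:Basic}(i) — replacing $\IE_i$ by $(J_i^n, nb)$ for large $n$ — and, when necessary, uses higher-dimensional regular centers aligned with the directrix of the current transform. A careful combinatorial argument then certifies every rational $p/q < \ord_x(\IE_1)$, and the hypothesis $\IE_1 \subset \IE_2$ transfers the conclusion to $\IE_2$.

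The main obstacle is precisely this combinatorial construction of test LSBs certifying rational orders. Iterated point blow-ups alone recover only $\lfloor \ord_x(\IE_1) \rfloor$, and to reach the sharp rational bound one must carefully combine Hironaka's trick of introducing new variables $t$, the scaling freedom of Lemma \ref{Lem:Basic}(i), and regular higher-dimensional centers inside the singular locus of the current transform, tracking the cumulative order drop so that it matches the target denominator. Ensuring at each stage that the chosen centers are both regular and contained in the successive singular loci — the two requirements of permissibility — is the technical heart of Hironaka's argument in \cite{HiroThreeKey}, Theorem 5.1.
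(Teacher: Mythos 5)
Your overall plan is correct — reduce to the order inequality, normalize the denominators, and construct a test LSB that detects $\ord_x(\IE_1)$ — and the symmetry reductions at the start are fine. But the construction of the test LSB is the entire content of the Numerical Exponent Theorem, and you explicitly leave it unresolved, conceding that it is ``the technical heart of Hironaka's argument.'' The sketch you do give would not close the gap.

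Two concrete problems. First, iterating point blow-ups at a point of maximal order for the controlled transform can only count an \emph{integer} number of steps, and the scaling $\IE \mapsto (J^n, nb)$ does not help: $\ord_x(J^n, nb) = \ord_x(J,b)$, so the number of such permissible steps is unchanged and you cannot see past the integer barrier. Second, the appeal to ``higher-dimensional regular centers aligned with the directrix of the current transform'' is left entirely unspecified, and the directrix in fact plays no role in the actual proof; the higher-dimensional center that does the work is much simpler.

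The missing construction — which is what the paper's proof does — is a single two-stage LSB $S(\alpha,\beta)$ over $Z[t]$ with \emph{one} auxiliary variable $t$. Blow up $\alpha$ times at the closed point on the $t$-axis, each time passing to the $T$-chart, i.e.\ to the intersection of the strict transform of the $t$-axis with the exceptional divisor. Writing $f = \sum c_A w^A \in J$ with $\ord_x(f) = d$, the controlled transform after $\alpha$ such blow-ups is $\sum c_A\, t^{\alpha(|A|-b)}(w/t^\alpha)^A$, so its $t$-adic order at the generic point of $V(t)$ is $\alpha(d-b)$ — linear in $\alpha$, so arbitrary denominators are reachable. Then blow up the codimension-one center $V(t)$ a total of $\beta$ times: each such blow-up is an isomorphism of schemes and divides the ideal by $t^b$, so the largest permissible $\beta$ is $\lfloor \alpha(\ord_x(\IE) - 1) \rfloor$. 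Taking $\alpha_0 = b_1 b_2$ (so that $\alpha_0(\ord_x(\IE_1)-1) \in \IZ$) and $\beta_0 = \alpha_0(\ord_x(\IE_1)-1)$, the LSB $S(\alpha_0,\beta_0)$ is permissible for $\IE_1$ but fails to be permissible for $\IE_2$ whenever $\ord_x(\IE_1) > \ord_x(\IE_2)$, contradicting $\IE_1 \subset \IE_2$. This clean two-stage trick is exactly the ``careful combinatorial argument'' that your proposal names but does not supply.
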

	
	The last statement implies that $ \Sing ( \IE_\sim ) $ is an invariant of the idealistic exponent.
	
	\begin{proof}
		We only explain the idea. 
		For more details we refer to Hironaka's original proof (\cite[Theorem 5.1]{HiroThreeKey}), or alternatively \cite[Proposition 1.1.10]{BerndThesis}.
		
		Consider the local situation at $ x \in Z $:
		Let $ R := \cO_{Z,x} $ and $ ( w ) = ( w_1, \ldots, w_n ) $ be a regular system of parameters for the (excellent) regular local ring $ R $.
		We introduce a new variable $ t $ and construct a {\LSB} $ S(\alpha, \beta) $ over $ R[t] $, $ \alpha, \beta \in \IZ_+ $, in the following way:
		
		First, we blow up with center $ V(w, t ) $ and we consider the origin of the $ T $-chart, i.e., the point with coordinates $ ( w', t) := ( \frac{w_1}{t}, \ldots, \frac{w_n}{t}, t) $.
		If $ \alpha = 1 $, the first part of $ S(\alpha, \beta ) $ ends.
		Otherwise we repeat the previous (blow up $ V(w',t) $ and consider the origin of the $ T $-chart) until we have blown up $ \alpha $-times.
		Hence we eventually finish with the point with coordinates $ (w^{(\alpha)}, t ) := ( \frac{w_1}{t^\alpha}, \ldots, \frac{w_n}{t^\alpha}, t) $.
		After this we blow up $ \beta $-times with center $ V( t ) $.
		(Keep in mind how the transform of a pair under a permissible blowup is defined, Definition \ref{Def:perm}).
		
		Suppose there exists some $ x_0 \in Z $ with $ \ord_{x_0} (\IE_1) > \ord_{x_0} (\IE_2) $.
		Set $ \alpha_0 := b_1 b_2 $ and $ \beta_0 = ( \ord_{x_0} ( \IE_1) - 1 ) \alpha_0 $.
		Then $ S(\alpha_0, \beta_0) $ is permissible for $ \IE_1 $, but not for $ \IE_2 $. 
		This contradicts $ \IE_1 \subset \IE_2 $ and the claim follows.
	\end{proof}
	
	In general, the converse of the Numerical Exponent Theorem is false. 
	More precisely, the condition $  \ord_x (\IE_1) \leq \ord_x (\IE_2) $, for all $ x \in Z $, is not stable under permissible blowups.
	An easy example is $ \IE_1 = ( y^2 + x^5, 2) $ and $ \IE_2 = ( x^2 + y^5 , 2) $ over $ \IA^2_\IC $.
	Both pairs have order $ 1 $ at the point $ V(x,y) $ and order $ 0 $ at any other point.
	Thus the blowup with center $ V(x,y) $ is permissible for both.
	We observe that the origin of the $ X $-chart (coordinates $ (x, \frac{y}{x} ) $) is permissible for $ \IE_1' $, but not for $ \IE_2' $.
	Therefore $ \IE_1 \not \subset \IE_2 $.
	
	\begin{Not}
		Let $ d \in \IZ_{\geq 0} $ be a non-negative integer and $ Z $ as usual a regular scheme (resp. let $ R $ be a regular ring).
		Then we denote by $ \Diff^{\leq d}_{\IZ} (Z) $ (resp. $ \Diff^{\leq d}_{\IZ} (R) $) the (absolute) differential operators of order at most $ d $ of $ \cO_Z $ (resp. $ R $) on itself.
	\end{Not}
	
	\begin{Prop}[Diff Theorem; {\cite[Theorem 3.4]{HiroThreeKey}}]
		\label{Prop:Diff}
		Let $ \IE = (J,b) $ be a pair on $ Z $ and $ d \in \IZ_{\geq 0} $.
		Let $ \cD $ be a left $ \cO_Z $-submodule of $ \Diff^{\leq d}_{\IZ}(Z) $. 
		Then
		$$
		(J, b) \subset (\cD J, b - d)
		$$
		or equivalently $ (J, b) \sim  (\cD J, b - d) \cap (J,b) $.
	\end{Prop}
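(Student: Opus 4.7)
The plan is to verify the defining containment $(J,b) \subset (\cD J, b-m)$ directly from Definition \ref{Def:equiv}: every {\LSB} over $Z[t]$ permissible for $(J[t],b)$ must also be permissible for $(\cD J[t], b-m)$. I would reduce immediately to the case without the auxiliary variables $(t)$: absolute differential operators extend naturally under $Z \leadsto Z[t]$ and $\cD J$ extends to generate the corresponding submodule of differentials applied to $J[t]$, so the same argument run in each local chart gives the $(t)$-version. I would also use $\cO_Z$-linearity of $\cD$ to reduce to a single $\delta \in \Diff^{\leq m}_\IZ(Z)$ applied to a single generator $f \in J$, since $\cD J$ is generated, as an $\cO_Z$-module, by elements of the form $\delta(f)$.

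The base case (length-zero {\LSB}) is the singular-locus inclusion $\Sing(\IE) \subseteq \Sing(\cD J, b-m)$: for $x \in \Sing(\IE)$ we have $\ord_x(f) \geq b$, and the standard order-drop property $\ord_x(\delta f) \geq \ord_x(f) - m$ for $\delta \in \Diff^{\leq m}$ yields $\ord_x(\cD J) \geq b-m$, so $x \in \Sing(\cD J, b-m)$. The rest of the argument is an induction on the length of the {\LSB}, and it remains to handle one permissible blow-up $\pi: Z' \to U \subseteq Z$ with regular center $D \subseteq \Sing(\IE)$. Writing $J \cO_{Z'} = J' H^b$ where $H$ is the ideal of the exceptional divisor, my goal is to exhibit a submodule $\cD' \subseteq \Diff^{\leq m}_\IZ(Z')$ together with the inclusion
\[
 \cD J \cdot \cO_{Z'} \ \subseteq\ H^{b-m} \cdot \cD' J',
\]
so that the transform of $(\cD J, b-m)$ contains $(\cD' J', b-m)$. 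Applying the base case to $(J',b)$ and $\cD'$ then yields $\Sing(J',b) \subseteq \Sing(\cD' J', b-m) \subseteq \Sing((\cD J)', b-m)$, so the next step of the {\LSB}, which is by hypothesis permissible for $(J',b)$, is also permissible for $(\cD J, b-m)'$, closing the induction.

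The main obstacle, and the heart of the proof, is establishing this compatibility of differential operators with a permissible blow-up. I would argue locally: pick $x' \in \pi^{-1}(D)$ with $x = \pi(x') \in D$, choose a regular system of parameters $(u_1,\ldots,u_e; y_1,\ldots,y_r)$ at $x$ with $D = V(y_1,\ldots,y_r)$, and pass to a standard chart of the blow-up, say $y_i = y_1 y_i'$ for $i \geq 2$. Since $D$ is permissible for $(J,b)$, every $f \in J$ satisfies $\ord_x(f) \geq b$, hence factors in the chart as $f = y_1^b \tilde f$ with $\tilde f$ a representative of $J'$ at $x'$. The chain rule expresses pulled-back partials $\partial/\partial y_j$ as $\cO_{Z'}$-linear combinations of $\partial/\partial y_1, \partial/\partial y_i'$, and by iterating one writes $\delta(f) \cdot \cO_{Z'}$ as $y_1^{b-m}$ times a sum of operators of order $\leq m$ on $Z'$ applied to $\tilde f$, yielding the displayed inclusion with $\cD'$ generated by the pullbacks of $\cD$ together with the derivations introduced by the chart. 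This is precisely the computation carried out by Hironaka in the proof of Theorem 3.4 of \cite{HiroThreeKey}; the only point requiring care in our setting over $\Spec(\IZ)$ is that one works throughout with absolute differential operators, which poses no problem since $Z$ is regular and Noetherian. Combining this local inclusion with the reductions above concludes the proof, and the equivalent formulation $(J,b) \sim (\cD J, b-m) \cap (J,b)$ then follows from Lemma \ref{Lem:Basic}\emph{(iv)}.
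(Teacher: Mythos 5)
Your proof takes essentially the same route as the paper: reduce to $(t) = \emptyset$, verify permissibility of the first center via the order-drop property of differential operators, and propagate along the {\LSB} by constructing a suitable $\cD' \subseteq \Diff^{\leq m}_\IZ(Z')$ on the blown-up scheme (citing Hironaka's Theorem~3.4 for the local verification). The one small deviation is that you settle for the inclusion $(\cD J)' \subseteq \cD' J'$ rather than the equality $(\cD J)' = \cD' J'$ obtained in the paper from the explicit formula $\cD' = H^{-b+m}\cdot\cD\cdot Q^b$; your version is adequate, since containment of ideals is preserved under transforms dividing out the same exceptional power, but the equality makes the iteration over longer sequences more transparent.
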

	
	If $ d \geq b $, then the assigned number of $ (\cD J, b - d) $ is not positive and hence is a priori not defined. 
	But the singular locus $ \{ x \in Z \mid \ord_x (\cD J) \geq b - d \} $ is still defined.
	Then $ \Sing (\cD J, b - d) = Z $ and the claim follows immediately.
	
	\smallskip
	
	We also use this proposition in the case of a single differential operator $ \cD \in \Diff^{\leq d}_{\IZ}(Z) $;
	here we identify $ \cD $ with the submodule of $ \Diff^{\leq d}_{\IZ}(Z) $ generated by $ \cD $.
	
	\begin{proof}
		Again, we explain the idea, and refer to Hironaka's original proof (\cite[Theorem 3.4]{HiroThreeKey}), or alternatively \cite[Proposition 1.1.13]{BerndThesis}.
		
		Since the situation does not change by extending $ Z $ to $ Z [t] $, we may assume $ ( t ) = \varnothing $.
		Let $ D \subset Z $ be an arbitrary regular closed subscheme and let $ \pi : Z' \to Z $ be the blowup with center $ D $.
		We show:
		\begin{enumerate}
			\item 
			If $ \pi $ is permissible for $ (J, b) $, then so it is for $ (\cD J, b - d) $.
			\item 
			The relation between the transforms of $ (J, b) $ and $ (\cD J, b - d) $ under $ \pi $ is the same as before.
		\end{enumerate} 
		Let $ y \in Z $ be a generic point of $ D $.
		The first part follows by the fact that $ J_y \subset \maxIdeal^\ell $,
		for some $ \ell \in \IZ_+ $
		and $ \maxIdeal $ the maximal ideal of $ {\mathcal O}_{Z,y} $, implies $ \cD J_y \subset \maxIdeal^{ \ell - d } $.
		(For example see \cite[Lemma 3.1]{HiroThreeKey}).
		
		For (2) 
		we have to show that there exists a left $ \cO_{Z'} $-submodule $ \cD' $ of  $ \Diff^{\leq d}_{\IZ}(Z') $ such that
		$
		(\cD J)' = \cD' J'
		$,
		with  $ (\cD J, b - d)' = ( (\cD J)', b - d) $ and $ (J,b)' = (J', b) $.
		
		\emph{Caution:} Do not forget that the transforms are given by different laws, namely $ (\cD J) \cO_{Z'} = H^{b- d}(\cD J)' $ and $ J \cO_{Z'} = H^b J' $, where $ H \subset \cO_{Z'} $ denotes the ideal sheaf of the exceptional divisor.
		
		Let $ Q \subset \cO_Z $ be the ideal sheaf corresponding to the center $ D $.
		Then the required property holds for
		$
		\cD' := H^{- b + d } \cdot \cD \cdot Q^b
		$
		(viewed as an left $ \cO_{Z'} $-module in the function field of $ Z $).
		It is only left to verify $ \cD' \subset \Diff^{\leq d}_{\IZ}(Z') $.
		%
	\end{proof}
	
	Assume $ Z = \Spec(R) $ is affine and let $ J \subset R $ be an ideal.
	Let $ ( f_1, \ldots, f_m ) $ denote a set of generators for $ J $ and let $ \cD $ be as before.
	Instead of $ \cD J $ we want to apply the Diff Theorem for the ideal generated by $ ( \cD f_1, \ldots, \cD f_m ) $.
	In general, these two ideals do not coincide.
	We sometimes use the Diff Theorem in the following slightly modified version:
	
	\begin{Cor}
		Let $ \IE = (J,b) $ be a pair on $  Z =\Spec(R) $ and $ \cD \subset  \Diff^{\leq d}_{\IZ}(R) $ be a left $ R $-submodule.
		Further, let $ ( f_1, \ldots, f_m ) $ be a set of generators of the ideal $ J $.
		Then
		$$
		(J, b) \subset ( \, \langle \cD f_1, \ldots, \cD f_m \rangle,\, b - d \,)
		$$
		or equivalently $ (J, b) \sim  ( \, \langle \cD f_1, \ldots, \cD f_m \rangle,\, b - d \,) \cap (J,b) $.
	\end{Cor}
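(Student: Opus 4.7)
The plan is to deduce this corollary directly from the Diff Theorem (Proposition \ref{Prop:Diff}) combined with the elementary observation that enlarging an ideal sheaf shrinks the associated pair with respect to $ \subset $. The trivial inclusion
$$
 \langle \cD f_1, \ldots, \cD f_m \rangle \subseteq \cD J
$$
is immediate, since each $ f_i $ lies in $ J $, so every $ D ( f_i ) $ is among the generators of $ \cD J $. The Diff Theorem then yields the pair inclusion $ (J, b) \subset (\cD J, b - m) $ for free.

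The main remaining step is the following monotonicity principle: if $ I_1 \subseteq I_2 $ are two quasi-coherent ideal sheaves of $ \cO_Z $ and $ c \in \IQ_+ $, then $ (I_2, c) \subset (I_1, c) $ as pairs. To check this along an arbitrary LSB over $ Z[t] $ permissible for $ (I_2[t], c) $, observe that at the generic point $ y $ of the permissible center in each step the containment $ I_{1,y} \subseteq I_{2,y} $ gives $ \ord_y ( I_1 ) \geq \ord_y ( I_2 ) \geq c $, so the center is also permissible for $ (I_1, c) $. The controlled transforms satisfy $ I_1' \subseteq I_2' $ because the inclusion $ I_1 \cO_{Z'} \subseteq I_2 \cO_{Z'} $ survives dividing through by the invertible ideal sheaf of the exceptional divisor to the power $ c $, and iterating (noting that base extension by the indeterminates $ ( t ) $ commutes trivially with ideal inclusions) establishes the principle. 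Applied to $ I_1 = \langle \cD f_1, \ldots, \cD f_m \rangle $ and $ I_2 = \cD J $, this yields $ (\cD J, b - m) \subset ( \langle \cD f_1, \ldots, \cD f_m \rangle, b - m) $.

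Chaining these two pair inclusions by transitivity of $ \subset $ produces the desired inclusion $ (J, b) \subset ( \langle \cD f_1, \ldots, \cD f_m \rangle, b - m) $. The equivalent $ \cap $-formulation then follows from Lemma \ref{Lem:Basic}(iv) by intersecting the just-proven inclusion with the trivial equivalence $ (J,b) \sim (J,b) $; the reverse inclusion $ ( \langle \cD f_1, \ldots, \cD f_m \rangle, b - m ) \cap (J,b) \subset (J,b) $ is immediate from the definition of intersection. The only point requiring any care is the monotonicity lemma, whose verification is essentially bookkeeping once the behavior of controlled transforms under ideal inclusion is noted; in particular no new Leibniz-type computation is needed, since the subtlety that $ \cD J $ and $ \langle \cD f_1, \ldots, \cD f_m \rangle $ do not generally coincide works in our favor (the second ideal is smaller, hence the resulting pair is larger).
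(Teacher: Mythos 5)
Your proof is correct and follows the same structure as the paper's: apply the Diff Theorem to obtain $(J,b) \subset (\cD J, b-m)$, then use the reversal of the pair order under inclusion of ideal sheaves to get $(\cD J, b-m) \subset (\langle \cD f_1, \ldots, \cD f_m \rangle, b-m)$, and chain by transitivity. The only cosmetic difference is that you verify the monotonicity principle $I_1 \subseteq I_2 \Rightarrow (I_2,c) \subset (I_1,c)$ directly from the definition of $\subset$ via the behavior of orders and controlled transforms, whereas the paper obtains it by invoking Lemma~\ref{Lem:Basic}(ii), which gives $(I_1,c) \cap (I_2,c) \sim (I_1 + I_2, c) = (I_2,c)$ and hence $(I_2,c) \subset (I_1,c)$.
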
	
	
	\begin{proof}
		By Proposition \ref{Prop:Diff} we have
		$
		(J, b) \subset (\cD J, b - d)
		$.
		Moreover, $ \langle \cD f_1, \ldots, \cD f_m \rangle \subseteq \cD J $ implies
		$
		(\cD J, b - d) \subset (  \langle \cD f_1, \ldots, \cD f_m \rangle,\, b - d ) 	
		$
		(use Lemma \ref{Lem:Basic}(2)).		
		This shows the assertion.	
	\end{proof}
	
	Since this is an immediate consequence of the Diff Theorem we do not distinguish between the corollary and the proposition.
	If we use them then we refer only to the Diff Theorem, Proposition \ref{Prop:Diff}.

	\smallskip
	
	Let $ R $ be a regular local ring with regular system of parameters $ (w_1, \ldots, w_n ) $.
	The following example shows that the equivalence of pairs is not stable if we pass from $ R $ to $ R_* := R [u_1, \ldots, u_n]/ \langle u_i - w_i^{a_i} \mid 1 \leq i \leq n \rangle $,
	for $ a_1, \ldots, a_n \in \IZ_+ $.

\begin{Ex} 
	Let $ R $ be the localization of $  \IC[x,y,z,t] $ at the maximal ideal $ \langle x,y,z,t \rangle $. 
	Consider the two pairs 
	\[ 
		\IE_1 = (z^3 - x^2 y^2,\, 3) \;\cap\; (t,\, 1)  	
		\hspace{5pt} \mbox{ and } \hspace{5pt}
		\IE_2 = (z^3 - x^2 y^2,\, 3) \;\cap\; (t^2-x y^2,\, 2). 
	\] 
	The Diff Theorem, Proposition \ref{Prop:Diff}, provides 
	(apply $ \frac{\partial}{\partial x}$ to $ \IE_1 $ and $ \frac{\partial}{\partial t}$ to $ \IE_2 $) 
	the equivalences
	\[ 
	\IE_1 \sim (z^3 - x^2 y^2,\, 3) \cap ( x y^2,\, 2)  \cap (t,\, 1) \sim \IE_2.
	\]	
	Let $ R_* := R [u]/ \langle u - x^3 \rangle $. 
	Then we obtain  
	\[ 
	\IE_{1,*} := (z^3 - u^6 y^2,\, 3) \;\cap\; (t,\, 1)  	
	\hspace{5pt} \mbox{ and } \hspace{5pt}
	\IE_{2,*} := (z^3 - u^6 y^2,\, 3) \;\cap\; (t^2 - u^3  y^2,\, 2). 
	\] 
	
	We claim that $ \IE_{1,*} \not\sim \IE_{2,*} $:
	We blow up with center $ V (u,z,t) $. 
	At the origin of the $ U $-chart, we have coordinates $ (u',y',z', t') = (u,y,\frac{z}{u}, \frac{t}{u}) $ and the transform of the pairs are 
	\[ 
	\IE_{1,*}' := (z'^3 - u'^3 y'^2,\, 3) \;\cap\; (t',\, 1),  	
	\hspace{5pt}\hspace{5pt}
	\IE_{2,*}' := (z'^3 - u'^3 y'^2,\, 3) \;\cap\; (t'^2 - u'  y'^2,\, 2). 
	\] 
	We observe that $ V(u',z',t') $ is permissible for $ \IE_{1,*}' $, 
	while it is not permissible for $ \IE_{2,*}' $.
	Hence,  $ \IE_{1,*} \not\sim \IE_{2,*} $.

	In Example~\ref{Ex:PolyNotUnique}, a more general variant of the example reappears in the context of polyhedra. 
	There, we also briefly discuss the origins of the example.  
	\end{Ex}

	%
	%
	%
	%
	%
	%
	%
	%
	%
	%
	%
	%
	%

	\section{Tangent cone, directrix and ridge}

	Let $ x \in Z $ be an arbitrary point and let $ R = \cO_{Z,x} $ be the regular local ring with maximal ideal $ \maxIdeal $ and residue field $ \resfield = R/\maxIdeal $. 
	Therefore we can associate the tangent space of $ Z $ at $ x $
	$$
	T_x ( Z ) := \Spec ( \gr_x (Z)),
	$$
	where $ \gr_x (Z) := \gr_M(R) := \bigoplus_{a \geq 0} \maxIdeal^a/ \maxIdeal^{a+1} $.
	
	Let $ \IE = (J,b) $ be a pair on $ Z $.
	By abuse of notation we neglect in $ \IE_x = (J_x, b) $ the index $ x $ and write also $ \IE = (J,b) $.
	In what follows we introduce the tangent cone, the directrix and the ridge of $ \IE $ at $ x $ and we discuss the aspect of their uniqueness up to equivalence.
	In order to achieve the last point we give an interpretation of these objects as idealistic exponents.
	
	\smallskip
	
	\begin{Def}
		Let $ f \in R $ and $ b \in \IQ_+ $ with $ b \leq \ord_x(f) $.
		We define the {\em $ b $-initial form of $ f $ (\wrt $ \maxIdeal $)}, \red{$ \ini (f,b) \in \maxIdeal^b / \maxIdeal^{b+1} $,} as
		$$
		\ini (f,b) := \left\{ \hspace{5pt} \begin{tabular}{cc}
		$ f \mod \maxIdeal^{ b + 1 } $, 	&	 if $ b \in \IZ_+ $, \\[5pt]
		$ 0 $,					&	 if $ b \notin \IZ_+ $.
		\end{tabular}
		\right.		
		$$
	\end{Def}
	
	Note that $ b <  \ord_x(f) $ implies $ \ini (f,b) = 0 $.
	
	\begin{Def}
		Let $ \IE = (J, b) $ be a pair on $ Z $ and $ x \in \Sing (\IE) $.
		Then we define the tangent cone $ \TC_x (\IE) \subset T_x ( Z ) $ of $ \IE $ at $ x $ as the subscheme defined by the homogeneous ideal $ \ini_x (J,b) \red{\,\subseteq \maxIdeal^b / \maxIdeal^{b+1}} \subset  \gr_x (Z) $, where
		$$
		\ini_x (J,b) :=  \left\{ \begin{tabular}{cc}
		$   J \mod \maxIdeal^{ b + 1 }  = \langle \ini (f, b) \mid f \in J \rangle $, 	&	 if $ b \in \IZ_{\ge 0} $, \\[5pt] 
		$ \langle 0 \rangle $,					&	 if $ b \notin \IZ_{\ge 0} $.
		\end{tabular}
		\right.		
		$$
	\end{Def}

	\begin{Rk}
		\label{Rk:2.3}
		\
		\begin{enumerate}
			\item[(1)]	 The ideal $ \ini_x (J,b) \subset \gr_x (Z) $ is well-defined and generated by homogeneous elements of degree $ b $, as $ x \in \Sing (\IE) $ and thus $ \ord_x ( J) \geq b $.			
			\red{If $ \ord_x (J) > b $, then $ T_x (\IE) = T_x (Z) $.}
			\item[(2)] 		The tangent cone $ \TC_x (\IE) $  is \emph{not} invariant under the equivalence relation $ \sim $.
			We overcome this later by using an idealistic interpretation of the tangent cone. 
			\item[(3)]	If $ \ord_x (\IE_2) > 1 $, then $ \ini_x(\IE_2)  = \langle 0 \rangle $,
			 and thus 
			$
			\TC_x ( \IE_1 \cap \IE_2 ) = \TC_x ( \IE_1). 
			$
		\end{enumerate}
	\end{Rk}

	\smallskip
	
	Let us for the moment consider a more general situation:
	Let $ K $ be a field, consider the polynomial ring $ S = K [ W ] = K [W_1, \ldots, W_n] $ as a graded ring and let $ I \subset S $ be a homogeneous ideal.
	Then $ I $ defines a cone $ C = C(I) = \Spec ( S/I ) $.
	In this setting we can define the directrix and the ridge of $ C $ which go back to Hironaka and Giraud.
	
	\begin{Def}
		{\red{The {\em directrix} $ \Dir(C) $ of the cone $ C = C(I) $ is the maximal vector subspace of $ \IA^n_K $ leaving the cone $ C $ stable under translations, i.e., such that we have $ C + \Dir(C) = C .$}}
	\end{Def}

	{\red{Hence, the directrix of $ C $ corresponds to the smallest $ K $-subvectorspace $ T = \bigoplus_{ j = 1 }^r K Y_j \subset S_1 = \bigoplus_{ i = 1 }^n K W_i $ generated by elements $ Y_1,\ldots Y_r \in  S_1 $ (homogeneous of degree one) such that
		$$
		(\, I \,\cap\, K[Y_1,\ldots,Y_r ] \,)\cdot S = I .
		$$
	In other words,}} $ T  = \bigoplus_{ j = 1 }^r K Y_j $ is the minimal $ K $-subspace such that $ I $ is generated by elements in $ K[ Y_1, \ldots, Y_r ] $.
			(i.e., $ ( Y_1, \ldots, Y_r ) $ is the smallest list of variables such that there exists a system of generators for $ I $ which can be expressed by $ ( Y) $).
	
	\begin{Def} 		
		We call $ I\Dir ( C ) := \langle Y_1, \ldots Y_r \rangle $ the \emph{ideal of the directrix}, 
		$$
		\Dir(C) = \Spec ( S /I\Dir ( C ) ) \subset  \IA^n_K = \Spec ( S ).
		$$
		We also say $ ( Y ) = ( Y_1, \ldots, Y_r ) $ defines the directrix and we implicitly assume that $ r $ is minimal.
	\end{Def}

	Recall that a polynomial $ \sigma \in K [W] = S $ is called additive if for any $ x, y \in K^n $ we have $ \sigma ( x + y ) = \sigma ( x ) + \sigma ( y ) $.
	
	\begin{Def}[{\red{\cite{GiraudEtude}, Ch. I, \S 5}}] 
		The {\em ridge} (or {\em fa\^ite} in French) $ \Rid (C) $
		{\red{is the maximal additive subgroup of $ \IA^n_k $
		leaving the cone $ C $ stable under translations,
		where we consider $ \IA^n_k $ as an additive group scheme.}}
	\end{Def} 

	{\red{Analogous to the directrix, the ridge of $ C $ corresponds to}}
	the smallest additive subspace $ K[ \sigma_1, \ldots, \sigma_s ] \subset S $ generated by additive homogeneous polynomials $  \sigma_1, \ldots, \sigma_s  \in  S $ such that
		$$
		(\, I \,\cap\, K[ \sigma_1, \ldots, \sigma_s ]\,)\cdot S = I .
		$$
		
	\begin{Def} 
		We call  $ I\Rid ( C ) := \langle \sigma_1, \ldots, \sigma_s \rangle $ the \emph{ideal of the ridge} and denote
		$$
		\Rid(C) := \Spec ( S /I\Rid ( C ) ) \subset  \IA^n_k
		\ \ \ \mbox{(group subscheme).}
		$$
		We say $ (  \sigma_1, \ldots, \sigma_s)  $ defines the ridge and implicitly assume that $ s $ is minimal.
	\end{Def}

	\begin{Rk}
		\label{Rk:DirRid}	
		In the case of $ \car{ K } = 0 $ the additive polynomials are those homogeneous of degree one. 
		Thus the previous definitions coincide in this situation, $ \Dir (C) = \Rid (C) $.
		
		If $ p = \car{ K } > 0 $ is positive, then the additive homogeneous polynomials are of the form $ \sigma = \sum_{i = 1 }^n \lambda_i W_i^q $, $ \lambda_i \in K $ and $ q = p^e $, $ e \in \IZ_{ \geq 0 } $.
		If moreover $ K $ is perfect, then $ \sigma = \psi^q $ for some $ \psi \in K [W] $ homogeneous of degree one.
		Hence the directrix is the reduction of the ridge, $ \Dir (C) = (\Rid (C))_{red} $, if $ K $ is perfect.
		
		For arbitrary $ K $ and $ \lambda \in K $, we do not know whether there is an element $ \rho  \in K $ such that $ \rho^q = \lambda $, $ q = p^e $ as before.
		But there is a purely inseparable finite extension $ K(\lambda) /K $ such that this property holds in $ K(\lambda) $;
		e.g. $ K(\lambda) = K[X]/ \langle X^q - \lambda \rangle $.
		Since the set of coefficients appearing in $ ( \sigma_1, \ldots, \sigma_s) $, $ \big\{ \lambda_i^{(j)} \in K \mid \sigma_j = \sum_{ i = 1 }^n \lambda_i^{(j)}\, W_i^{q_j}, \, q_j = p^{ e_j }, e_j \in \IZ_{ \geq 0 } ,\, j \in \{ 1, \ldots, s \} \big\} $, is a finite set, there exists a finite pure-inseparable extension $ K'/K $ such that $ \Dir (C)_{K'} = (\Rid (C)_{K'})_{red} $, where $ (.)_{K'} = (.) \times_K K' $.
	\end{Rk}
	
	For more details on the ridge (and in particular an intrinsic definition) see \cite{GiraudEtude, ComputeRidge}.
	Further, in \cite{CPS} the role of the ridge as an invariant of a singularity is discussed. 
	
	\smallskip
	
	We come back  to our situation (that is, where $ S = \gr_x ( Z ) $, $ C = \TC_x (\IE) = \Spec ( S / \ini_x (\IE)) $).
	
	\begin{Def}
		Let $ \IE = (J,b) $ be a pair on $ Z $ and $ x \in \Sing ( \IE) $.
		We define
		the {\em directrix of $ \IE $ at $ x $} by
		$ 
		\Dir_x (\IE) := \Dir (\TC_x (\IE)) 
		$,
		and the {\em ridge of $ \IE $ at $ x $} by
		$ 
		\Rid_x (\IE) := \Rid (\TC_x (\IE)) 
		$.
	\end{Def}
	
	\smallskip	
	
	Let us provide the \emph{idealistic interpretation} of the tangent cone $ \TC_x ( \IE ) $, the directrix $ \Dir_x ( \IE ) $ and the ridge $ \Rid_x ( \IE ) $ of $ \IE $ at $ x \in \Sing {\IE} $.
	First, we point out the following:
	
	\begin{Rk}
		\label{Rk:idealistic}
		Let $ \IE = (J,b) $ be a pair on $ Z $.
		By Lemma \ref{Lem:Basic}(1), $ \IE $ is equivalent to $ \IE^a := (J^a, a b ) $, for all $ a \in \IZ_+ $.	
		Let $ x \in \Sing ( \IE ) = \Sing (\IE^a) $ and $ R = \cO_{Z,x} $ as before.
		We denote by $ \resfield $ the residue field of $ R $.
		Let $ ( w ) = ( w_1, \ldots, w_n ) $ be a regular system of parameters for $ R $. 
		Then $ \gr_x ( Z ) \cong \resfield[W] = \resfield[W_1, \ldots, W_n] $, where $ W_i $ denotes the image of $ w_i $ in $ \maxIdeal / \maxIdeal^2 $ ($ \maxIdeal = \langle w_1, \ldots, w_n \rangle $), and $ \ini_x (\IE) = \langle \ini (f, b) \mid f \in J \rangle $.
		We want to show the following equality of ideals in $ \gr_x ( Z) $:
		$$
		\ini_x (\IE^a) = ( \ini_x (\IE) )^a. 
		$$
		
		Clearly, $ \ini (f + g , b) = \ini ( f , b ) +  \ini ( g, b) $ for $ f, g \in J $.
		Consider an element $ g \in J^a $ which is of the form $ g = g_1 \cdots g_a $, for $ g_1, \ldots, g_a \in J $.
		Since $ x \in \Sing ( \IE ) $ the initials $ \ini ( g_i, b ) $ are either zero or homogeneous of degree $ b $ ($ \ord_x ( g_i ) \geq b $), for all $ i \in \{ 1, \ldots, a \} $.
		Thus $ \ini (g, a b ) = \prod_{ i = 1}^a  \ini ( g_i, b ) $ and we get the desired equality $ \ini_x (\IE^a) = ( \ini_x (\IE) )^a $.
		
		If we put $ \ITC_x (\IE) := ( \ini_x (\IE), b ) $ and $ \ITC_x (\IE^a) := ( \ini_x (\IE^a), a b ) $, then the   last equation implies that these are equivalent pairs on $ T_x ( Z ) = \Spec (  \gr_x ( Z ) ) $.

		Let  $ I\Dir_x ( \IE ) = \langle Y_1, \ldots, Y_r \rangle $ be the ideal of the directrix with elements $ Y_j \in \resfield[W] , 1 \leq j \leq r $, which are homogeneous of degree one.
		By definition of the directrix, there exists a system of generators for $ \ini_x ( \IE ) $ which are contained in $ \resfield[Y_1,\ldots,Y_r] $ and $ (Y) $ is minimal with this property.
		This implies that there exists a system of generators for $ \ini_x ( \IE^a ) $ contained in $ \resfield[Y_1,\ldots,Y_r] $ and $ (Y) $ is also minimal:
		Suppose this is wrong, say there exist generators that are contained in $ \resfield[Z_1,\ldots,Z_s] $ for some $ s < r $.
		Then the same is true for the generators of $ \ini_x ( \IE ) $ which is a contradiction.
		This shows 
		$$ 
		\Dir_x (\IE) = 	\Dir_x (\IE^a).
		$$
		Thus the pairs $ \IDir_x ( \IE ) := (I\Dir_x ( \IE ), 1) $ and $ \IDir_x ( \IE^a ) := (I\Dir_x ( \IE^a ), 1) $ (defined on $ T_x ( Z ) $) are equal. 
		
		Now let  $ I\Rid_x (\IE) = \langle \sigma_1, \ldots, \sigma_s \rangle $  be the ideal of the ridge with additive homogeneous polynomials $ \sigma_{\textcolor{black}{j}} \in \resfield[ Y_1, \ldots, Y_r ] \subset \resfield [W] $ \textcolor{black}{of degree $ q_j =  p^{d_j} $ for some $ d_j \in \IZ_{\geq 0} $ ($ p =  \car{ \resfield } $)}, $ 1 \leq {\textcolor{black}{j}} \leq s $.
		\color{black}
		Giraud proved in \cite[3.3.3, 3.3.4, Definition 3.1(d)]{GiraudMaxPos} that there exist homogeneous generators $ G_1, \ldots, G_m $ of $ \ini_x (\IE) $ 
				such that
				every $ \sigma_j $ is given by $ \sigma_j = P_j (D^Y_A(G_i)\mid A,i ) $ for certain polynomials $ P_j = P_j (X_{i,A} \mid i, A ) $ 
				which are homogeneous of degree $ q_j $ if we assign to the variable $ X_{i,A} $ the weight $ b - |A| $. 
		Let $ D^Y_A $ be the Hasse-Schmidt derivative with respect to the variables $ (Y) $, for $ A = (A_1, \ldots, A_r) $ with $ |A|< b $.  
			The Diff Theorem (Proposition~\ref{Prop:Diff}) 
			implies that, for every $ i \in \{ 1, \ldots m \} $, we have 
			\begin{equation}
			\label{eq:Diff_Hasse_sim}
			(G_i , b ) \sim (G_i, b) \cap (D^Y_A (G_i), b - |A| ).
			\end{equation} 
			Note that $ D^Y_A (G_i) $ is homogeneous of degree $  b - |A| $ (if it is non-zero) since $ G_i $ is homogeneous of degree $ b $.
			Moreover, observe that Lemma~\ref{Lem:Basic}(2) and (3) imply
			\begin{equation}
			\label{eq:sum_and_product_polynomials}
			\left\{ 
			\begin{array}{c}
			(H_1,a ) \cap  (H_2, a ) \sim (H_1,a ) \cap  (H_2, a )  \cap (H_1 + H_2, a) \\[3pt]
			(H_1, a_1) \cap (H_2,a_2) \sim (H_1, a_1) \cap (H_2,a_2) \cap  (H_1 H_2, a_1 + a_2), 
			\end{array} 
			\right. 
			\end{equation}
			for $ H_1, H_2 \in \resfield[U,Y] $ and $ a, a_1, a_2 \in \IZ_+ $.
	Therefore, we have 
	\begin{equation}
	\label{eq:IT_sim_IDir}
	\hspace{-10pt}
	\left\{ 
	\hspace{-2pt} 
	\begin{array}{c} 
	\displaystyle 
	\ITC_x (\IE) =  ( \ini_x (\IE), b )
	\sim \bigcap_{i=1}^m (G_i,b) 
	\stackrel{\eqref{eq:Diff_Hasse_sim}}{\sim} 
	\bigcap_{i=1}^m (G_i,b) \cap \bigcap_{A,i} (D^Y_A (G_i), b - |A| )
	\stackrel{\eqref{eq:sum_and_product_polynomials}}{\sim} 
	\\[10pt]
	\displaystyle 
	\stackrel{\eqref{eq:sum_and_product_polynomials}}{\sim} 
	\underbrace{\bigcap_{i=1}^m (G_i,b)  \cap \bigcap_{A,i} (D^Y_A (G_i), b - |A| )}_{
		\displaystyle 
		\sim ( \ini_x (\IE), b ) } 
	\cap \bigcap_{j=1}^s \big( \, \underbrace{P_j (D^Y_A(G_i)}_{\displaystyle = \sigma_j}, \, q_j  \, \big)
	\sim
	\\[10pt]
	\displaystyle 
	{\sim} 
	( \ini_x (\IE), b )	\cap 
	\bigcap_{j=1}^s ( \sigma_j, q_j )
	\stackrel{\eqref{eq:sum_and_product_polynomials}}{\sim}
	\bigcap_{j=1}^s ( \sigma_j, q_j ) =: \IRid_x ( \IE ) ,
	\end{array}
	\hspace{-10pt} 
	\right. 
	\end{equation} 
	where we use in the last equivalence that $ \ini_x (\IE) $ has a generator systems contained in $ \resfield[\sigma_1, \ldots, \sigma_s] $ by definition of the ridge. 
	
	Let $ I\Rid_x (\IE^a ) = \langle  \gamma_1, \ldots, \gamma_t \rangle $,
	for additive polynomials $ \gamma_\ell \in \resfield[Y] $ homogeneous of degree $ p^{d_\ell} $, for $ 1 \leq \ell \leq t $. 
	Using the same arguments above, we get
	\[
	 ( \ini_x (\IE^a), a b )  
	\sim  \bigcap_{\ell=1}^t ( \gamma_\ell, p^{d_\ell}  )
	 =: \IRid_x ( \IE^a ).
	\]
	Using the previously shown equivalence, we get that
	\begin{equation}
	\label{eq:sim_in_ab_in_b_rid} 
		\IRid_x ( \IE^a ) \sim ( \ini_x (\IE^a), a b ) \sim ( \ini_x (\IE), b )  
		\sim  \IRid_x ( \IE ) 
	\end{equation} 
	are equivalent pairs on  $ T_x ( Z ) $.
	\end{Rk}
	
	This observation gives the hint that the tangent cone (resp. the ridge) of equivalent pairs might be related if we use an idealistic interpretation.
	Hence we introduce the following definitions of the tangent cone, the directrix and the ridge as pairs and prove that these actually provide well-defined idealistic exponents. 
	The variant of the tangent cone appeared already in the language of idealistic filtrations (\cite[Definition 1.1.2]{IFPObergurgl})
	and also in the language of Rees algebras
	(\cite[section 1.2]{AngelicaOrlandoElimination}), 
	but the concepts of directrix and ridge considered as pairs are new.
	
	\begin{Def}
		Let $ \IE = (J,b) $ be a pair on $ Z $ and $ x \in \Sing ( \IE ) $.
		Recall that $ \resfield $ denotes the residue field of $ Z $ at $ x $ and $ p =  \car{ \resfield } \geq 0 $.
		Let $ I\Dir_x ( \IE ) = \langle Y_1, \ldots, Y_r \rangle $ and $ I\Rid_x ( \IE ) = \langle \sigma_1, \ldots, \sigma_s \rangle $ for elements $ Y_j $ homogeneous of degree one, $ 1 \leq j \leq r $, and additive homogeneous polynomials $ \sigma_i $ of order $ p^{d_i} $, $ 1 \leq i \leq s $. 	
		We define the following pairs on $ T_x (Z) = \Spec ( \gr_x (Z) ) $:
		\begin{center}
			\begin{tabular}{ll}
				$ \ITC_x ( \IE ) = (\, \ini_x ( \IE ),\, b \,) $ & \em (idealistic tangent cone of $ \IE $ at $ x $), \\[5pt]
				$ \IDir_x ( \IE ) = (\, I\Dir_x ( \IE ),\, 1 \,) $  & \em (idealistic directrix of $ \IE $ at $ x $), \\[5pt]
				$ \IRid_x ( \IE ) = \bigcap_{i = 1 }^s (\, \sigma_i ,\, p^{d_i} \,) $  & \em (idealistic ridge of $ \IE $ at $ x $) .
			\end{tabular} 
		\end{center}	
	\end{Def}
	
	\smallskip
	
	\begin{Rk}
		\begin{itemize}
			\item[(1)]
			By Remark \ref{Rk:idealistic}, we have for a pair $ ( J, b ) $ and a positive integer $ a \in \IZ_+ $
			$$ 
			\begin{array}{c}
			\ITC_x (J,b) \sim \ITC_x (J^a, a b) ,\;\; 
			\IDir_x (J,b) = \IDir_x (J^a, a b)  ,\\[2pt]
			\IRid_x (J,b) \sim \IRid_x (J^a, a b ) .
			\end{array}
			$$ 
			\item[(2)]
			Let $ \IE_1 = (J_1, b_1) $ and $ \IE_2 = ( J_2, b_2) $ be two pairs on $ Z $ and $ x \in \Sing ( \IE_1 \cap \IE_2 ) $.
			Suppose $ b_1 = b_2 = : b $.
			By definition,
			$ \IE_1 \cap \IE_2 = ( J_1 + J_2 , b) $ and hence	
			$$
			\ini_x (J_1 + J_2 , b) =
			\ini_x ( J_1, b ) + \ini_x ( J_2, b ).
			$$
			
			For $ b_1 \neq b_2 $, let $ c := \mathrm{lcm}(b_1, b_2) $ and $ c_i := \frac{c}{b_i} $, $ i \in \{ 1, 2 \} $.
			Then the definition of $ \IE_1 \cap \IE_2 $, the previous, and Remark \ref{Rk:idealistic} imply
			$$
			\ini_x (\IE_1 \cap \IE_2 ) = \ini_x (\IE_1)^{c_1} + \ini_x (\IE_2)^{c_2}.
			$$
			
			Therefore we get: 
			$$
			\begin{array}{rcl}
			\ITC_x ( \IE_1 \cap \IE_2 ) & \sim &  \ITC_x ( \IE_1 ) \, \cap \,  \ITC_x (\IE_2 ) ,
			\\[5pt]
			\IDir_x ( \IE_1 \cap \IE_2 ) & = & \IDir_x ( \IE_1 ) \,\cap\,  \IDir_x ( \IE_2 )  , 
			\\[5pt]
			\IRid_x ( \IE_1 \cap \IE_2 ) & \sim &  \IRid_x ( \IE_1 ) \, \cap \,  \IRid_x ( \IE_2 ) .
			\end{array} 
			$$	
		\end{itemize}
	\end{Rk}
	
	\smallskip
	
	\begin{Prop}
		\label{Prop:SubsetAndSing}
		Let $ \IE = (J,b) $ be a pair on $ Z $ and $ x \in \Sing ( \IE ) $.
		We have
		\begin{enumerate}
			\item 
			$ \IDir_x ( \IE ) \subset \IRid_x( \IE ) {\, \red{\sim} \, } \ITC_x(\IE) $.
			\item 
			$ \Dir_x (\IE) =  \Sing (\IDir_x(\IE))  \subseteq \Sing (\IRid_x(\IE)) \subseteq \Sing (\ITC_x(\IE))$.
		\end{enumerate}
	\end{Prop}
	
	\begin{proof}
		Let $ (Y) = ( Y_1, \ldots, Y_r ) $ be elements (homogeneous of degree one) which determine $ \Dir_x (\IE) $ and extend these by $ (U) = (U_1, \ldots, U_e ) $ such that $ \gr_x ( Z ) \cong \resfield [U,Y] $.
		Further let $ (\sigma) = ( \sigma_1, \ldots ,\sigma_s ) $ be additive homogeneous polynomials which yield $ \Rid_x ( \IE ) $.

		Let $ ( T ) = (T_1, \ldots, T_a ) $ be a set of variables that are independent of $ ( U , Y ) $.
		If we pass from $ \resfield[U,Y] $ to $ \resfield[U,Y] \times_\resfield \resfield[T] = \resfield [U, Y , T] $, then the situation does not change since the generators of all three pairs are homogeneous.
		Hence, it suffices to consider the pairs as pairs on $  \resfield [U,Y] $.
		{\red{Since by definition there exist generators of $ \ini_x ( \IE ) $ which are contained in $ \resfield [\sigma] \subset \resfield [Y] $, any center which is permissible for $ \IDir_x ( \IE ) $ 
		is so for $ \IRid_x( \IE ) $. 
		After blowing up, either  $ \IDir_x ( \IE ) $ 
		is resolved or the situation is still the same.
		This shows the first inclusion of (1)}}.
		\red{The remaining equivalence  
		$
			\IRid_x(\IE) \sim \ITC_x (\IE)
		$
		has already been discussed in \eqref{eq:IT_sim_IDir}}.

		The first equality of (2) follows by the definition of the singular locus of a pair 
		and (1) implies the rest via the Numerical Exponent Theorem, Proposition \ref{Prop:NumExp}.
	\end{proof}
	
	In characteristic zero or if the characteristic $ p  > 0 $ is greater than $ b $, we have the following
	
	\begin{Cor}
		\label{Cor:DirRidITCchar0}
		Let $ \IE = (J,b) $ be a pair on $ Z $ and $ x \in \Sing (\IE) $.
		Assume $ \car{ \resfield }  = 0 $ or $ b < \car{ \resfield }  $, where $ \resfield $ denotes the residue field of $ Z $ at $ x $.
		Then
		$$
		\IDir_x ( \IE ) \sim \IRid_x( \IE ) \sim \ITC_x(\IE).
		$$
		In particular, $ \Dir_x (\IE) =  \Sing (\IDir_x(\IE)) = \Sing (\IRid_x(\IE)) = \Sing (\ITC_x(\IE)) $.
	\end{Cor}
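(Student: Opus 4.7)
My overall approach is to reduce both equivalences to the single new containment $\ITC_x(\IE) \subset \IDir_x(\IE)$. Lemma~\ref{Lem:SubsetAndSing}(i) already supplies $\IDir_x(\IE) \subset \IRid_x(\IE) \subset \ITC_x(\IE)$, so once the extreme converse is shown the ridge is sandwiched automatically. I would first dispatch the ridge--directrix identification using the hypothesis: an additive homogeneous polynomial over $\resfield$ has degree $p^{d}$ with $p = \car{\resfield}$, and if such a $\varphi_i$ truly contributes to a homogeneous generator of $In_x(\IE)$ of degree $b$, then $p^{d_i} \leq b < p$ forces $d_i = 0$. Hence every generator $\varphi_i$ of $I\Rid_x(\IE)$ is linear and $I\Rid_x(\IE) = I\Dir_x(\IE)$. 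Applying Lemma~\ref{Lem:Basic}\emph{(ii)} to the intersection $\IRid_x(\IE) = \bigcap_{i=1}^{l}(\varphi_i, 1)$ of pairs with common assigned number $1$ then collapses $\IRid_x(\IE)$ to $(I\Dir_x(\IE), 1) = \IDir_x(\IE)$.

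For the heart of the argument I would apply the Diff Theorem (Proposition~\ref{Prop:Diff}) to $\ITC_x(\IE) = (In_x(\IE), b)$ on $T_x(Z) = \Spec gr_x(Z)$ with the full $\cO_{T_x(Z)}$-module $\cD = \Diff^{\leq b-1}_{\IZ}(T_x(Z))$, obtaining $\ITC_x(\IE) \subset (\cD \cdot In_x(\IE), 1)$. It then remains to check $\cD \cdot In_x(\IE) \supseteq I\Dir_x(\IE) = \langle Y_1, \ldots, Y_r\rangle$, where $(U, Y)$ is a {\RSP} with $(Y)$ defining the directrix, because then the enlargement of the ideal (with the assigned number fixed at $1$) gives $(\cD \cdot In_x(\IE), 1) \subset (I\Dir_x(\IE), 1) = \IDir_x(\IE)$. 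To verify this containment I would compute, for each homogeneous generator $f = \sum_{|\alpha| = b} c_\alpha Y^\alpha \in In_x(\IE)$ and each multi-index $\beta$ with $|\beta| = b-1$,
\[
  \partial_Y^\beta f \;=\; \sum_{i=1}^{r} c_{\beta+e_i}\,(\beta_i + 1)\,\beta!\; Y_i,
\]
observe that under the hypothesis the factors $\beta!$ and $\beta_i + 1 \leq b$ are units in $\resfield$, and invoke the minimality clause in the definition of the directrix: were the span of all such derivatives strictly smaller than $\langle Y_1, \ldots, Y_r\rangle$, a suitable linear change of variables would let us rewrite every generator $f$ in fewer than $r$ new linear forms, contradicting the minimality of $r$. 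The concluding assertion on singular loci is immediate from the Numerical Exponent Theorem (Proposition~\ref{Prop:NumExp}).

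I expect the main obstacle to be exactly this last linear-algebra step, where the assumption $\car{\resfield} = 0$ or $b < \car{\resfield}$ is essential. In positive characteristic with $p \leq b$ the naive Taylor recipe breaks down: for instance $\partial_Y Y_i^p = 0$, and more generally the combinatorial coefficients $\beta!$ or $\beta_i + 1$ in the formula above can vanish modulo $p$, so the span of the $(b-1)$-th derivatives of the degree-$b$ generators may be a proper subspace of $\langle Y_1, \ldots, Y_r\rangle$. The hypothesis precisely forces every such coefficient to be invertible in $\resfield$, which is what allows all of $\langle Y_1, \ldots, Y_r\rangle$ to be recovered and the whole reduction to go through.
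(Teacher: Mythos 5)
Your argument is correct and rests on the same core idea as the paper's proof: reduce to the single containment $\ITC_x(\IE) \subset \IDir_x(\IE)$ via Lemma~\ref{Lem:SubsetAndSing}, and obtain it from the Diff Theorem with order-$(b-1)$ operators applied to $In_x(\IE)$, using the hypothesis $\car{\resfield}=0$ or $b<\car{\resfield}$ to ensure the combinatorial coefficients in the derivatives are units. The paper organizes this slightly differently: it constructs, one $j$ at a time, a specific Hasse-type operator $\cD_{M(j)}$ and an explicit element $Y_j^\ast$ (iteratively changing coordinates to keep the $Y_j^\ast$ independent), then intersects the pairs $(Y_j^\ast,1)$; you instead apply Diff once with the full module $\Diff^{\leq b-1}_\IZ$ and dispatch the linear-independence question by a dimension/minimality contradiction. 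Both are fine, and your one-shot version arguably avoids the bookkeeping of the paper's iterative coordinate changes. Your preliminary observation that the hypothesis already forces every additive generator $\varphi_i$ of $I\Rid_x(\IE)$ to be linear (since $p^{d_i}\leq b<p$) is a clean and valid way to see $\IRid_x(\IE)=\IDir_x(\IE)$ directly, though it is superfluous once the sandwich $\IDir\subset\IRid\subset\ITC\subset\IDir$ is in hand; the paper only mentions the converse difficulty in the subsequent remark. The only place where you are somewhat terse is the step ``were the span of the derivatives a proper subspace $\dots$ contradicting minimality of $r$'': for completeness one should note that in any linear coordinate system $Z$ extending a basis of that span, the vanishing of the $Z_k$-components of all $\partial_Z^\beta f$ for $k>s$ and $|\beta|=b-1$, combined with the invertibility of the factors $(\beta_k+1)\beta!$, forces $c_{\beta+e_k}=0$ for all such $\beta$, hence $f\in\resfield[Z_1,\ldots,Z_s]$. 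You clearly understand this, but making it explicit would remove any doubt about where the characteristic hypothesis enters.
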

	
	\begin{proof}
		By Proposition~\ref{Prop:SubsetAndSing} we only have to show $ \ITC_x(\IE) \subset \IDir_x ( \IE ) $.
		Let $ (R = \cO_{Z,x}, \maxIdeal, \resfield ) $ be the local ring of $ Z $ at $ x $ and consider a regular system of parameters $ (u,y) = (u_1, \ldots, u_e, y_1, \ldots, y_r) $ for $ R $ such that $ I\Dir_x ( \IE ) = \langle Y_1, \ldots, Y_r \rangle $, where $ Y_j $ denotes the image of $ y_j $ in $ \maxIdeal / \maxIdeal^2 $.
		Then  
		$$
		\IDir_x ( \IE ) = ( \langle Y_1, \ldots, Y_r \rangle, 1 ) \sim  (Y_1, 1) \cap \ldots \cap (Y_r, 1 ).
		$$
		Recall that $ \ITC_x(\IE)  = ( \ini_x (\IE), b ) $.
		By definition of the directrix, there exists a system of generators for $ \ini_x (\IE) $ such that every element is contained in $ \resfield [Y] $ and each $ Y_j $ appears to a non-zero power.
		Hence they lie in $ \langle Y \rangle^b \setminus \langle Y \rangle^{ b + 1 } $ and every element $ F \in \ini_x (\IE) $ in the given system of generators can be written as 
		$$
		F = \sum_{ \substack{B \in\IZ^r_{ \geq 0 } \\[3pt] |B| = b} } \coeff_B \,Y^B,
		$$
		for $ \coeff_B \in \resfield $.
		Further, for every $ 1 \leq j \leq r $, there exists a generator $ F(j) \in \ini_x (\IE) $ such that there is $ B(j) = (B_1, \ldots, B_r) \in \IZ_{ \geq 0 }^r $ with $ \coeff_{B(j)} \neq 0 $ and $ B_j \geq 1 $.
		Therefore, this is an element of $ \ini_x (\IE) $, where $ Y_j $ appears.
		
		Set $ M (j) := B(j) - e_j \in \IZ^r_{ \geq 0 } $.
		(Here $ e_j \in  \IZ^r_{ \geq 0 } $ denotes the $ j $-th unit vector with zero everywhere except the $ j $-th place, at which there is a one).
		Note that $ | M(j) | = b - 1 $.
		Let $ \cD_{ M(j) } \in \Diff_\resfield^{ \leq b - 1 } ( \resfield [Y] ) $ be the differential operator which is defined via
		$$
		\cD_{ M(j) } ( \coeff \, Y^B ) = \binom{B}{ M(j) } \, \coeff \, Y^{ B - M(j) }.
		$$	
		for $ \coeff \in \resfield $ and $ B \in \IZ^r_{ \geq 0 } $.
		In particular, $ \cD_{ M(j) } ( \coeff \, Y^{ B(j) } ) =  \coeff \,\binom{ B(j) }{ M(j) } \, Y_j = \coeff \,B_j \, Y_j $ and $ \cD_{ M(j) } ( \coeff \, Y^B ) = 0 $ if $ | B | = b $ and $ B \notin M(j) + \IZ_+^r $;
		consequently, 
		$$ 
		\cD_{ M(j) } (F (j)) = \coeff_{B(j)} \, B_j \, Y_j + \sum_{B'(i)} \coeff_{ B'(i) } \, B'_i \, Y_i,
		$$
		where $ B' ( i ) = ( B'_1, \ldots, B'_r ) \in \{ M(j) + e_i \mid i \in \{ 1, \ldots, r \} \setminus \{ j \} \} $.
		Since we have $ 1 \leq B_j \leq b $ and $ \car{ \resfield }  = 0 $ or $ b < \car{ \resfield }  $, we get that $  B_j $ (and thus $  \coeff_{B(j)}\, B_j $) is a unit in $ \resfield $.
		We define $ Y^\ast_j \in \resfield [ Y ] $ via
		$$
		Y^\ast_j := ( \coeff_{B(j)}\, B_j  )^{ - 1 } \, \cD_{ M(j) } \, F(j) = Y_j + \sum_{B'(i)} ( \coeff_{B(j)}\, B_j  )^{ - 1 }\coeff_{ B'(i) } \, B'_i \, Y_i .
		$$
		Let $ j = 1 $.
		We choose in $ R $ a system of representatives for $ \resfield = R / \maxIdeal $ and define with this $ y^\ast_1 \in R $ by replacing $ (Y) $ by $ ( y ) $ in the definition of $ Y^\ast_1 $.
		The system $ ( y^\ast_1, y_2, \ldots, y_r ) $ fulfills the same properties as $ ( y ) $. 
		So we may consider the regular system of parameters $ ( u; y^\ast_1, y_2, \ldots, y_r ) $ instead of $ ( u, y ) $ and put $ \cD_1 := \cD_{M(1)} $.
		(Note that $ \cD_1 $ is defined in terms of $ ( Y ) $).
		We repeat the above procedure to obtain $ y_2^\ast $ and $ \cD_2 $.
		We continue until we obtain $ ( y^\ast) = (y_1^\ast, \ldots, y_r^\ast ) $.
		The Diff Theorem \ref{Prop:Diff} yields, for all $ j \in \{ 1, \ldots, r \} $, that $ ( F(j), b ) \subset  ( \cD_j F(j), 1 )  = ( Y_j^\ast, 1 ) $.
		This implies
		\[
		\ITC_x(\IE)  = ( \ini_x (\IE), b ) \subset ( Y_1^\ast, 1) \cap \ldots \cap ( Y_r^\ast, 1 ) =  \IDir_x ( \IE ).
		\]
	\end{proof}
	
	\begin{Rk}
		\label{Rk:failurePoscharMax}
		For the arbitrary case the equivalences need not hold.
		One reason is that $ B_j $ may be zero in $ \resfield $. 
		Thus the assumption $ \car{ \resfield }  = 0 $ or $ b < \car{ \resfield }  $ is essential.
		%
		If $ \resfield $ is perfect, then we have that
		$ \IDir_x ( \IE ) = (Y_1, 1) \cap \ldots \cap (Y_r, 1 ) \sim (Y_1^{p^{d_1}}, p^{d_1}) \cap \ldots \cap (Y_r^{p^{d_r}}, p^{d_r} )  = \IRid_x ( \IE ) $, for certain $ d_j \in \IZ_{ \geq 0 }$, i.e., Corollary \ref{Cor:DirRidITCchar0} is also true in this case.
	\end{Rk}
	
	\smallskip
	
	There is not necessarily a relation between the tangent cones $ \TC_x ( \IE ) $ of equivalent pairs.
	(For example, $ ( x^2 y - z^3 , 3) \sim ( x^2, 2 ) \cap ( x^2 y - z^3 , 3)  \sim (x,1) \cap ( y,1) \cap ( z,1) $ over $ \IC $).
	For idealistic interpretations we have the following strong result.
	
	\begin{Prop}[Proposition \ref{MainProp:IdTanRidDir}]
		\label{Prop:ItcDirRidUnique}
		Let $ \IE_1 = (J_1, b_1) $ and $ \IE_2 = (J_2, b_2) $ be two pairs on $ Z $ with $ \IE_1 \subset \IE_2 $ and $ x \in \Sing (\IE_1) \subseteq \Sing (\IE_2) $.
		Then we have 
		\begin{enumerate}
			\item 
			$ \ITC_x (\IE_1) \subset  \ITC_x (\IE_2) $.
			\item 
			$ \Dir_x (\IE_1) \subseteq \Dir_x (\IE_2) $ and hence $ \IDir_x (\IE_1) \subset \IDir_x (\IE_2) $.
			\item 
			$ \IRid_x (\IE_1) \subset \IRid_x (\IE_2) $.
		\end{enumerate}
		By symmetry we get equivalence $ \sim $ and equality if $ \IE_1 \sim \IE_2 $.
	\end{Prop}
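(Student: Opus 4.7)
The heart of the argument is part (i); parts (ii) and (iii) will then be read off from (i) together with the intrinsic characterizations of directrix and ridge inside the tangent cone. Throughout I localize at $x$, writing $R = \cO_{Z,x}$ with residue field $\resfield$, and fix a {\RSP} $(w) = (w_1, \ldots, w_n)$ for $R$.

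For (i) my plan is to invoke Hironaka's $T$-chart trick. Introduce a new indeterminate $t$; since Definition \ref{Def:equiv} already builds in arbitrary extensions by indeterminates, $\IE_1 \subset \IE_2$ at once yields $\IE_1[t] \subset \IE_2[t]$ on $Z[t]$. Consider the \LSB{} $\pi_\alpha$ consisting of $\alpha$ successive blow-ups at the origin of $Z[t]$, each performed in the $T$-chart (i.e.\ $w_i \mapsto t \cdot w_i'$). After these $\alpha$ blow-ups a generator $f = \sum_\nu c_\nu w^\nu$ of $J_i$ becomes $f^{(\alpha)} = \sum_\nu c_\nu\, t^{\alpha(|\nu| - b_i)} (w^{(\alpha)})^\nu$, which lies in the new local ring precisely because $x \in \Sing(\IE_i)$ forces $\ord_x(J_i) \geq b_i$. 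A direct computation modulo $(w^{(\alpha)}, t)^{b_i + 1}$ then gives $in(f^{(\alpha)}, b_i) = in(f, b_i)$, so at the origin after $\pi_\alpha$ the tangent cone of $\IE_i^{(\alpha)}$ is $\TC_x(\IE_i) \times \mathbb{A}^1_T$; in particular $\pi_\alpha$ is permissible for $\IE_i[t]$ at every intermediate stage. Now, starting from an arbitrary \LSB{} $\mathcal{S}$ on $T_x(Z)[s]$ permissible for $\ITC_x(\IE_1)[s]$, I lift $\mathcal{S}$ to a \LSB{} $\tilde{\mathcal{S}}$ on the ambient scheme after $\pi_\alpha$ over $Z[t, s]$, by taking the same centers read through the identification of $T_x(Z)$ with the $V(t)$-component of the exceptional divisor. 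The coincidence of initial forms computed above ensures that permissibility of $\mathcal{S}$ for $\ITC_x(\IE_1)[s]$ propagates into permissibility of the composite $\tilde{\mathcal{S}} \circ \pi_\alpha$ for $\IE_1[t, s]$. Applying $\IE_1[t, s] \subset \IE_2[t, s]$ yields permissibility of the same composite for $\IE_2[t, s]$, and descending once more via the initial form along $V(t)$ gives permissibility of $\mathcal{S}$ for $\ITC_x(\IE_2)[s]$, establishing (i).

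Parts (ii) and (iii) follow from (i) via the Numerical Exponent Theorem (Proposition \ref{Prop:NumExp}) together with the intrinsic description of $\Dir_x(\IE)$ (resp.\ $\Rid_x(\IE)$) as the largest linear subspace (resp.\ additive subgroup scheme) of $T_x(Z)$ contained in $\Sing(\ITC_x(\IE))$. From (i) one obtains $\Sing(\ITC_x(\IE_1)) \subseteq \Sing(\ITC_x(\IE_2))$; the linear subspace $\Dir_x(\IE_1)$ sits in $\Sing(\ITC_x(\IE_1))$ by Lemma \ref{Lem:SubsetAndSing}\emph{(ii)}, hence in $\Sing(\ITC_x(\IE_2))$, and maximality forces $\Dir_x(\IE_1) \subseteq \Dir_x(\IE_2)$. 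The pair-level statement $\IDir_x(\IE_1) \subset \IDir_x(\IE_2)$ is then the formal consequence of $I\Dir_x(\IE_1) \supseteq I\Dir_x(\IE_2)$. Part (iii) is analogous: using the additive-differential-operator construction recalled around Remark \ref{Rk:DirRid} together with the Diff Theorem (Proposition \ref{Prop:Diff}) to extract $\IRid_x(\IE)$ from $\ITC_x(\IE)$ in a way compatible with $\subset$, the inclusion $\ITC_x(\IE_1) \subset \ITC_x(\IE_2)$ of (i) propagates to $\IRid_x(\IE_1) \subset \IRid_x(\IE_2)$.

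The most delicate step is the permissibility lifting in (i): one must verify, inductively along $\tilde{\mathcal{S}}$, that each transformed center remains inside the singular locus of the iterated transform of $\IE_1[t, s]$, and not merely inside that of its tangent cone at the relevant point. Concretely this amounts to checking that the exceptional $T$-direction contributes only higher-order terms to the initial forms at every subsequent stage of $\tilde{\mathcal{S}}$, so that the $V(t)$-component continues to faithfully encode $\ITC_x(\IE_i)$ throughout the sequence; the bookkeeping here is what makes the $T$-chart trick actually work.
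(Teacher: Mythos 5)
Your strategy for part (i)---Hironaka's $T$-chart trick over $Z[t]$ followed by lifting a {\LSB} from $T_x(Z)$ back to the ambient scheme---is exactly the paper's (run forward rather than by contradiction, which is fine), but you leave the crucial permissibility-lifting step unproved. Coincidence of the $b_i$-initial forms at the new origin $x_\alpha$ does \emph{not} by itself propagate permissibility along the lifted sequence $\tilde{\mathcal{S}}$: writing the transform as $f^{(\alpha)} = f_0(w/t^\alpha) + t^{\alpha(d(f)-b)}\, h_*$ with $d(f) := \ord_x(f - f_0) > b$, the error term can spoil permissibility at later stages unless $\alpha$ is controlled. The paper closes this by fixing $\alpha$ so large that $\alpha(d(f)-b) \geq b$ for every $f \in J_1$ and $g \in J_2$ (the condition $(\ast)$ in the paper's proof); since the lifted centers are obtained by intersecting the original ones with $V(t)$, that choice forces the error term to have order $\geq b$ along every such center, so permissibility of a lifted center for $\IE_i^{(\alpha)}$ is governed entirely by the initial form, i.e.\ by $\ITC_x(\IE_i)$. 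This is the single insight your closing remark about ``bookkeeping'' defers without supplying, and it must be made explicit. You should also reduce to $b_1 = b_2$ at the outset via Observation~\ref{Obs:idealistic} and $(J^a, ab) \sim (J, b)$, so that one $\pi_\alpha$ acts uniformly on both pairs.

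Your part (ii) is correct and in fact slightly cleaner than the paper's: you push $\Dir_x(\IE_1)$ through the inclusion $\Sing(\ITC_x(\IE_1)) \subseteq \Sing(\ITC_x(\IE_2))$ obtained from (i) and the Numerical Exponent Theorem, then invoke the characterization of $\Dir_x(\IE_2)$ as the largest linear subspace contained in $\Sing(\ITC_x(\IE_2))$; the paper instead notes that $\Dir_x(\IE_1)$ is a permissible center for $\ITC_x(\IE_2)$ and uses that every permissible center through the origin of a cone lies in its directrix. Both routes work. Part (iii), however, is too vague to be a proof: extracting the additive polynomials that generate $\IRid_x(\IE_i)$ from $In_x(\IE_i)$ is done from two different ideals, and nothing in the definitions or the Diff Theorem makes this extraction monotone for $\subset$. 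The argument that actually closes the case is again by contradiction and lifting as in (i): a witnessing {\LSB} over $\resfield[W]$ is permissible for $\ITC_x(\IE_1)$ by Lemma~\ref{Lem:SubsetAndSing}, and lifting it to $R_0$ through the same $T$-chart construction produces a {\LSB} that contradicts $\IE_1 \subset \IE_2$. You should replace your sketch for (iii) by that explicit lifting.
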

	
	This yields that the idealistic version of the tangent cone, the directrix and the ridge are uniquely determined by $ x $ and the equivalence class of $ \IE $, i.e., by the idealistic exponent $ \IE_\sim $.
	Thus $ \IE_\sim $ yields well-defined idealistic exponents $ \ITC_x (\IE)_\sim $, $ \IDir_x (\IE)_\sim $, and $ \IRid_x (\IE)_\sim $.
	In the special situation over perfect fields the implication $ \IE_1 \sim \IE_2 \Rightarrow \Dir_x (\IE_1)  = \Dir_x (\IE_2)  $ was already proven in \cite[Proposition 19.2]{HiroIdExp}.
	
	\begin{Rk}
		For the proof, we need the following: 
		Let $ f \in R $ and $ b \in \IQ_+ $ with $ \ord_M(f) \geq b $.
		We fix a regular system of parameters $ ( w ) = (w_1, \ldots, w_n) $ for $ R $.
		Then $ f $ has a finite expansion in $ R $ of the form
		$$
		f = \sum_{N} C_N w^N,
		$$	
		for $ C_N \in R^\times \cup \{ 0 \} $
		(since $ R = \cO_{Z,x} $ is a regular local ring, in particular Noetherian, and $ R \subset \widehat R $ is faithfully flat, where $ \widehat R $ denotes the completion of $ R $ with respect to its maximal ideal,
		see \cite[Proposition~2.1]{CPdim3}).
		We split this expansion into
		$$ 
		f = f_0 + h , 
		$$
		$$
		\mbox{ where } 
		f_0 := \sum_{N: |N|=b} C_N w^N
		\ \mbox{ and } h := f - f_0 \in \langle w  \rangle^{b+1}.
		$$ 
		Note that $ \ini(f,b) = \ini(f_0,b) $.
		If $ \ord_M (f) > b $ (which is for example the case if $ b \in \IQ_+ \setminus \IZ_+$), then $ f_0 \equiv 0 $.
		
		If $ \ord_M(f) = b $, then the residue classes $ C_N \mod M $ (with $ |N|=b $) are unique since they are determined by $ \ini(f,b) $.
		On the other hand, if $ C_N \neq 0 $, then the units $ C_N $ are not necessarily unique; 
		for example, for $ f = w_1^2 + w_1^5 + w_2^3 $ two possible choices are $ f_0 (w) := w_1^2 $ or $ \widetilde{f_0} :=  (1 + w_1^3) w_1^2 $. 
	\end{Rk} 
	
	\begin{proof}[Proof of Proposition \ref{Prop:ItcDirRidUnique}]
		By Remark \ref{Rk:idealistic} the result holds in the case $ \IE = (J, b) \sim (J^a, a b) $ for some $ a \in \IZ_+ $.
		Hence it suffices to consider $ b_1 = b_2 = b $.
	 	Let $ R = \cO_{Z,x} $ with maximal ideal $ \maxIdeal $ and residue field $ \resfield $
		and $ (w) = (w_1, \ldots, w_n) $ be a regular system of parameters for $ R $.
		
		Let $ \IE = (J,b) \in \{ (\IE_1)_x, (\IE_2)_x \} $.	
		We extend $ R $ to $ R_0 := R \times_\IZ \IZ[t]
		$, where  $ t $ is an independent variable.
		We use the notation $ \IE_0 := ( J^{(0)} := J[t], b ) $ and  $ V_0 := \Spec ( R_0 ) $.
		Let $ L_0 \subset V_0 $ be the line $ V(w) $ and $ x_0 \in L_0 \subset Z_0 $ be the closed point $ V(w,t) $.
		For $ \alpha \in \IZ_+ $, we consider the following {\LSB}, which is permissible for $ \IE_0 $ (since $ x \in \Sing ( \IE) $),
		\begin{equation}
		\label{diagram1again}
		\begin{array}{ccccccccc}
		L_0	&	\cong	&	\hspace{30pt} L_1	&	\cong	&	\ldots	&	\hspace{45pt}	L_{\alpha-1}	&	\cong	&	\hspace{30pt} L_\alpha \\[5pt]
		\cap	&&	\hspace{30pt}	\cap	&&			&	\hspace{45pt}	\cap && \hspace{30pt}\cap	\\[5pt]
		V_0		& \stackrel{\pi_1}{\leftarrow} &	Z_1 \supset V_1		& \stackrel{\pi_2}{\leftarrow} &	\ldots	&	Z_{\alpha-1}  \supset V_{\alpha-1}	& \stackrel{\pi_\alpha}{\leftarrow} 	&	Z_\alpha \supset V_{\alpha}	\\[5pt]
		\vin	&&	\hspace{30pt}	\vin	&&			&	\hspace{45pt}	\vin && \hspace{30pt}\vin	\\[5pt]
		x_0		&&	\hspace{30pt}	x_1		&&	\ldots	&	\hspace{45pt}	x_{\alpha-1}	&&	\hspace{30pt}x_{\alpha},	
		\end{array}
		\end{equation} 
		where $ \pi_{i}: Z_{i} \rightarrow V_{i-1} $ is the blowup with center $ x_{i-1} \in V_{i-1} $, $ L_i \cong L_0 $ is the strict transform of $ L_0 $, $ x_i $ denotes the unique intersection point of $ L_i $ with the exceptional divisor of the blowup $ \pi_{i} $ and $ V_i = \Spec (R_i) \subset Z_i $ is the $ T $-chart of the blowup, $ i \in \{ 1, \ldots, \alpha \} $.
		Recall that $ L_0 = V(w) $, hence $ x_i = V \left( \frac{w}{t^i}, t \right) $, $ L_i = V \left( \frac{w}{t^i} \right) $ and $ \left( \frac{w}{t^i}, t \right) $ are the variables in $ R_i $.
		
		Let $ f \in J $ be an arbitrary element.
		Consider an expansion of $ f $ in $ R $ as in the previous remark, $ f (w) = f_0 (w) + h (w) $, where $ f_0(w) = \sum_{N: |N|=b} C_N w^N $ and $ h (w) \in \langle w \rangle^{ b + 1 } $.
		As pointed out before, $  f_0 (w) $ is not unique since only the residue classes $ C_N \mod \langle w \rangle $, but not $ C_N $ itself are unique. 
		Nonetheless, this will not affect our arguments below as the important property for $ C_N \in R^\times $ is that its residue with respect to $ \langle \frac{w}{t^i}, t  \rangle $ does not change. 
		 
		Set 
		$ 
		d := d(f) := \ord_x ( h ) $. 
		By construction, we have
		$ d > b $.
		The transform of $ f $ in $ V_\alpha $ is given by
		\begin{equation}
		\label{niceform}
		f^{ ( \alpha ) } \left( \frac{w}{t^\alpha}, t \right) 
		= f^{ ( \alpha ) }_0 \left( \frac{w}{t^\alpha} \right) + t^{ \alpha \cdot ( d - b ) } \cdot h_\ast \, ,
		\hspace{10pt}
		\mbox{ for some } 
		h_\ast \in  \left\langle \frac{w}{t^\alpha} \right\rangle^{ b + 1}.
		\end{equation}
		Recall that $ x_{\alpha} = V( \frac{w}{t^\alpha}, t ) $.
		It is clear that the generators of the ideal of the tangent cone (and thus also its idealistic version) at $ x $ did not change under the extension to $ R_0 $.
		By the above, we see that the tangent cone at $ x_\alpha $ is the same as the one at $ x_0 $ before the permissible {\LSB} (\ref{diagram1again});
		just replace in $ \ini_x ( J^{(0)}, b ) $ the coordinates $ ( w ) $ by $ ( \frac{w}{t^\alpha} ) $ in order to get  $ \ini_{x_{\alpha}} ( J^{(\alpha)}, b ) $.
		
		Given $ \IE_1 \subset \IE_2 $, then we can perform the above permissible {\LSB} and get $ \IE_1^{ ( \alpha ) } \subset \IE_2^{ ( \alpha ) } $ on $ V_\alpha $.
		Further, every $ f^{(\alpha)} \in J_1^{(\alpha)} $ and $ g^{(\alpha)} \in J_2^{(\alpha)}  $ is of the form (\ref{niceform}).
		Now choose $ \alpha $ large enough such that 
		\begin{equation}
		\label{eq_altes_ast}
		\alpha \cdot ( d(f) - b ) \geq b 
		\hspace{10pt} \mbox{ and } \hspace{10pt}
		\alpha \cdot ( d(g) - b ) \geq b 
		\end{equation} 
		for every $ f^{(\alpha)} \in J_1^{(\alpha)} $ and $ g^{(\alpha)} \in J_2^{(\alpha)}  $.
		
		For simplicity, let us drop the indices and assume from the very beginning that $ \IE_1 \subset \IE_2 $ on $ V_0 $ are of the special type described above.
		By the previous discussion this is justified.
		As usual capital letters $ (W, T) $ denote the images of $ ( w, t) $ in $ \langle w, t \rangle / \langle w, t \rangle^2 $. 
		We want to point out that by (\ref{niceform}) the generators of $ \ini_x (\IE_1) $ and  $ \ini_x (\IE_2) $ are contained in $ \resfield [W] $.
		Hence we consider $ \ITC_x (\IE_1) $ and  $ \ITC_x (\IE_2) $ as pairs on $ \Spec ( \resfield [W] ) $.
		
		Since the tangent cones are generated by homogeneous elements, an extension by independent variables $ (t') = ( t'_1, \ldots, t'_a) $, for some $ a \in  \IZ_+ $, does not affect the situation.
		So it suffices to consider the case without an extension of the base.
		
		\smallskip
		
		For (1) we first want to show
		\begin{equation}
		\label{ITCsubset}	
		\ITC_x (\IE_1) \subset  \ITC_x (\IE_2).
		\end{equation}	
		Suppose this is wrong. 
		Then there exists a {\LSB} $ (\diamondsuit) $ over $ \resfield [W] $ which is permissible for $ \ITC_x (\IE_1) $, but not for $ \ITC_x (\IE_2) $.
		By (\ref{niceform}), $ \ini_x (\IE_1) $ is generated by the images of $ f_0 ( w ) $ in the graded ring and $ \ini_x (\IE_2) $  by the images of $ g_0 ( w ) $ (for $ f \in J_1 $ and $ g \in J_2 $).
		We can lift the centers of $ (\diamondsuit) $ back to $ R $ (by choosing a system of representatives of $ \resfield = R / \maxIdeal $ in $ R $ and using $ (w) $ instead of $ (W) $).
		Moreover, we can intersect the centers with $ V(t) $ and obtain a LSB over $ R_0 $.
		Because of the special form (\ref{niceform}) we get by blowing up these modified centers a {\LSB} $ ( \widetilde{ \diamondsuit } ) $ over $ V_0 $, which is permissible for $ \IE_1 $ by the permissibility of $ (\diamondsuit) $ and property \eqref{eq_altes_ast} of $ \alpha $.
		But since $ (\diamondsuit) $ is not permissible for $ \ITC_x (\IE_2) = (  \ini_x (\IE_2), b ) $, we also have that $ ( \widetilde{ \diamondsuit } ) $ is \emph{not} permissible for $ \IE_2 $.
		This contradicts $ \IE_1 \subset \IE_2 $ and proves (1).
		
		\smallskip
		
		We come to (2), $ \Dir_x (\IE_1) \subseteq \Dir_x (\IE_2) $.
		By Proposition~\ref{Prop:SubsetAndSing} $ \Dir_x ( \IE_i ) \subseteq \Sing ( \ITC_x ( \IE_i ) ) $ and by definition of the directrix, it is a permissible center for $  \ITC_x ( \IE_i ) $, $ i \in \{ 1, 2 \} $.
		Further (\ref{ITCsubset}) implies that $ \Dir_x (\IE_1) $ is a permissible center for $  \ITC_x ( \IE_2 ) $, which contains the origin.
		By the minimality of the directrix $ \Dir_x (\IE_2) $ any permissible center containing the origin must lie in $ \Dir_x (\IE_2) $.
		This implies $ \Dir_x (\IE_1) \subseteq \Dir_x (\IE_2) $.
		The second part of (2) follows by Lemma~\ref{Lem:Basic}(2).
		
		\smallskip
		
		Part (3), $ \IRid_x (\IE_1) \subset \IRid_x (\IE_2) $, \red{is a consequence of Proposition~\ref{Prop:SubsetAndSing}(1).
		We get}
		\[
		\red{\IRid_x(\IE_1) 
		\sim
		\ITC_x(\IE_1)
		\stackrel{\eqref{ITCsubset}}{\subset}  
		\ITC_x(\IE_2)
		\sim
		\IRid_x(\IE_2).}
		\]
		
		\vspace{-18pt}

	\end{proof}

	\color{black}

	We end this section by discussing the role of the codimension of the ridge of an idealistic exponent 
	as an invariant for desingularization. 
	First, we prove the following result:

	\begin{Thm}
		\label{Thm:codim_invariant} 
		Let $ ( I, b ) \sim ( I' , b ' ) $ 
		be two equivalent pairs
		with non-zero homogeneous additive ideals $I$ resp.~$I'$
		in $ \resfield[X] := \resfield[X_1, \ldots, X_n] $ generated by homogeneous additive elements of degrees $b$ resp. $b'$.
		Then $ \operatorname{codim}(I) = \operatorname{codim}(I') =: \tau_{st} $. 
	\end{Thm}
	
	\begin{proof}
		If $ \car{\resfield} =  0 $, then we must have $ b = b ' = 1 $. 
		Further, $ \tau_{st} $ is just the codimension of the singular locus of $ ( I, b) $, which coincides with the directrix of $ ( I, b ) $ by Corollary~\ref{Cor:DirRidITCchar0}.
		Hence, Proposition~\ref{Prop:ItcDirRidUnique} implies the assertion in characteristic zero.
		
		Suppose $ \car{\resfield} = p > 0 $. 
		The additive hypothesis provides $ b = p^c $, for some $ c \in \IZ_{ \geq 0 } $.
		Up to renumbering the variables $ X_i $,
		there exist generators of $ I  $ of the form
		\[
			\sigma_1 = X_1^b + \sum_{i=2}^n \lambda_{1,i} X_i^b,
			\ \ 
			\sigma_2 = X_2^b + \sum_{i=3}^n \lambda_{2,i} X_i^b,
			\ \ 
			\ldots, 
			\ \
			\sigma_{\tau} = X_\tau^b  + \sum_{i=\tau + 1}^n \lambda_{\tau,i} X_i^b,
		\]  
		for $ \lambda_{j,i} \in K $. 
		Add variables $ ( T) = ( T_1 , \ldots, T_m) $ 
		and in $ \resfield[X, T] $,
		let us look at the maximal ideal 
		$ \langle X_1, \ldots, X_n, Q_1 (T_1), \ldots, Q_m(T_m) \rangle $,
		where $ Q_j ( T_j ) \in \resfield [T_j] $ 
		are irreducible and 
		the residues of $ \lambda_{j,i} $ are all $ b $-powers.
		At the corresponding closed point $ x' \in \Spec(\resfield[X,T]) $,
		the reduced tangent cone of $ (I,b) $ is a linear space of codimension $ \tau $.
		It is the codimension of the singular locus of $ \ITC_{x'} (\IE_\sim) $,
		where $ \IE_\sim $ is the idealistic exponent defined by $ (I,b) \sim (I', b') $ and 
		$ \ITC_{x'} (\IE) $ is its tangent space.
		Hence the codimensions coincide and $ \tau = \tau_{st} $. 
	\end{proof}

	\begin{Cor}
		Let $ \IE_\sim $ be an idealistic exponent on a regular scheme $ Z $ defined by a pair $ \IE = ( J,b) $.
		For all $ x \in \Sing (\IE_\sim) $,
		define $ \tau_{st} (x) $ as the codimension of $ \Rid_x ( \IE) $ in $ T_x(Z) $. 
		Then $ -\tau_{st} $ is upper semi-continuous along $ \Sing(\IE_\sim) $.
	\end{Cor}

	Note that $ \tau_{st} (x) $ is an invariant of the idealistic exponent by Theorem~\ref{Thm:codim_invariant}. 
	Hence, the previous definition makes sense.  
	
	\begin{proof}
		First, assume that $ b = 1 $.
		We use the criterion for upper semi-continuity \cite[Lemma~1.34(a)]{CJS}.
		Therefore, we have to prove:
		\begin{enumerate}
			\item[(i)]
			If $ x, y \in \Sing ( \IE) $ and $ x \in \overline{\{ y \}} $, then $ - \tau_{st}(x) \geq - \tau_{st} (y) $. 
			
			\item[(ii)]
			For every $ y \in \Sing (\IE) $ there exists a dense open subset $ U \subset \overline{ \{ y \} } $ such that $ -\tau_{st} (x) = - \tau_{st} (y) $ for all $ x \in U $ . 
		\end{enumerate}
		First, we show (i):
		Let $ V \subset Z $ be an affine subset containing $ x $ and $ y $.
		Let $ f_1, \ldots, f_m $ be local generators for $ J $ in $ V $.
		We assume that $ \ord_y (f_1) = \ldots = \ord_y (f_a) = 1 < \min \{ \ord_y (f_i)  \mid i \in \{ a + 1, \ldots, m \} \} $
		and that $ ( f_1, \ldots, f_a) $ forms a regular sequence, i.e., $ f_i \notin \langle f_1, \ldots f_{i-1} \rangle $ for all $ i \in \{ 2 , \ldots, a \} $. 
		Then, we have $ a = \tau_{st} (y) $.  
		Since the order is upper semi-continuous and $ x \in \overline{ \{ y \} } $,
		we have $ \ord_x ( f_i ) \geq \ord_y (f_i) $.
		This implies that $ \tau_{st} (x) \leq \tau_{st} (y) $, which is the desired equality. 
		
		We come to (ii): 
		Let $ V' \subset Z $ be an affine neighborhood of $ y $.
		Let $ f_1, \ldots, f_m $ be local generators for $ J $ in $ V' $ as in the proof of (i).
		In particular, $ a = \tau_{st} (y) $.  
		Since the order is upper semi-continuous,
		there exists a dense open subset $ U \subset \overline{\{ y \}} $ such that 
		$ \ord_x (f_i) = \ord_y (f_i) $ for all $ i \in \{ 1, \ldots, m \} $ and every $ x \in U $. 
		Therefore, we have $  - \tau_{st} (x) = - \tau_{st} (y) $ for every $ x \in U $.
		
		In conclusion, $ - \tau_{st} $ is upper semi-continuous along $ \Sing ( \IE) $ if $ b = 1 $.

		\smallskip

		Suppose $ b \geq 2 $. 
		Consider an affine subset $ U \subset Z $. 
		let $ f_1, \ldots, f_m $ be local generators for $ J $ in $ U $.
		Let $ T_1, \ldots, T_m, S_1, \ldots, S_m $ be a set of independent new variables. 
		We define $ g_i := T_i S_i^{b-1} + f_i $ for $ i \in \{ 1, \ldots , m \} $.
		Set $ \widetilde{J} := \langle g_1, \ldots, g_m \rangle $ and $ \widetilde{\IE} := (\widetilde J, b ) $, which is a pair on $ U $.
		We have
		\begin{equation}
			\label{eq:proof} 
			\widetilde \IE \sim (\langle T, S \rangle , 1 )\cap ( \langle f_1, \ldots, f_m \rangle , b).
		\end{equation} 
		This implies $ \Sing( \IE) \cap U \cong \Sing ( \widetilde \IE )  $ and $  \Sing ( \widetilde \IE ) $ is contained in the locus, 
		where $ T_i = S_i = 0 $ for all $ i \in \{ 1, \ldots, m \} $. 
		By \eqref{eq:proof},
		$ \IRid_x ( \widetilde \IE ) \sim (\langle T, S \rangle ,1 ) \cap \IRid_x (\IE) $,
		for $ x \in \Sing (\IE) \cap U $. 
		Therefore, we get
		\begin{equation}
		\label{eq:proof2}
			\operatorname{codim}_x (\Rid_x (\widetilde \IE)) =
			\operatorname{codim}_x (\Rid_x (\IE)) + 2 b  
		\end{equation}
		Since we have $ \ord_x ( \widetilde J)  = b $, 
		for every $ x \in \Sing ( \widetilde \IE ) $,
		we get $ \IRid_x ( \widetilde \IE ) = \Rid_x ( \widetilde J ) $,
		where the latter is the ridge of the tangent cone of $ \widetilde J $ at $ x $. 
		In particular, the codimension of both coincide. 
		By \cite[Th\'eor\`eme~1.2]{CPS}, the function defined by 
		$ x \mapsto - \operatorname{codim}_x ( \Rid_x ( \widetilde J )) $ is upper semi-continuous.
		(Note that the Hilbert-Samuel of $ \widetilde J $ function is constant along the singular locus of $ \widetilde \IE $ 
		as $ ( g_1, \ldots, g_m ) $ is a standard basis at every point in there -- for the definition of a standard basis, see \cite[Definition~2.7(1)]{HomeworkDim2}, for example.)  
		Therefore, \eqref{eq:proof2} provides that $ - \tau_{st} $ is upper semi-continuous. 
	\end{proof}
	
	\begin{Cor}
		Let $ \IE_\sim $ be an idealistic exponent on a regular scheme $ Z $ defined by a pair $ ( J,b) $ and $ x \in \Sing (\IE) $.
		Let $ \pi  \colon Z' \to Z $ be a permissible blow-up for $ \IE $
		and $ x' \in \pi^{-1} (x) $.
		Suppose $ x' \in \Sing (\IE')\subset Z' $, where $ \IE' $ is the transform of $ \IE $.
		Then, we have 
		\[
			\tau_{st} (x') \geq \tau_{st} (x). 
		\] 
	\end{Cor}

	\begin{proof}
		By \cite[Proposition 4.2, p.~II-33]{GiraudEtude} 
		(for $ N = b $), we have
		that the dimension of the ridge does not increase 
		after blowing up a permissible center. 
		Since $ \tau_{st} (x) $ is defined as the codimension of the ridge, 
		we obtain the desired inequality.
	\end{proof}

	\color{black}

	%
	%
	%
	%
	%
	%
	%
	%
	%
	%
	%
	%
	%

	\section{Characteristic polyhedra of pairs and idealistic exponents}
	\label{sec:3}
	
	We introduce the Newton polyhedron and the characteristic polyhedron of a pair.
	The latter is achieved by imitating the construction of Hironaka's characteristic polyhedron of a singularity.
	After that we discuss the notion of the characteristic polyhedron of an idealistic exponent.
	
	\smallskip
	
	Let $ \IE  = ( J, b ) $ be a pair on $ Z $ and $ x \in \Sing (\IE) $.
	Denote as usually by $ ( R = \cO_{Z,x}, \maxIdeal, \resfield) $ the local ring of $ Z $ at $ x $.
	By abuse of notation we skip the index $ x $ and write $ \IE = (J,b) $ instead of $ \IE_x $. 	
	Fix a system $ (u) = ( u_1, \ldots, u_e)$ of elements in $ \maxIdeal $ which can be extended to a regular system of parameters for $ R $.
	We consider various choices of a system $ (y) = (y_1, \ldots, y_r) $ such that 
	$ (u, y) $ is a regular system of parameters for $ R $.  
	
	\smallskip
	
	Let $ ( f ) = ( f_1, \ldots, f_m ) $ be a set of generators of $ J $.
	Every element $ g\in J $ has a finite expansion in $ R $ of the following form (see \cite[Proposition~2.1]{CPdim3}):
	\begin{equation}
	\label{expansion}
	g = \sum_{ (A,B) \in \IZ^{e+r}_{ \geq 0 } } \coeff_{A,B} \, u^A \, y^B
	\end{equation}
	with coefficients $ \coeff_{ A, B } \in R^\times \cup \{ 0 \} $.
	Denote by $ C_{ A, B, i } $ the coefficients of the expansion of $ f_i $, $ 1 \leq i \leq m $.

	\begin{Def}
		\label{Def:NewtonPolyandPolyNonIntrinsic}
		For the given data we introduce the following objects.
		\begin{enumerate}
			\item[(1)]	The {\em Newton polyhedron} $ \Delta^\NW (\IE, u, y) $ (or $ \Delta^\NW_x (\IE, u, y) $) of $ \IE = (J,b) $ at $ x $ \wrt $ ( u, y ) $ is defined to be the smallest closed convex subset of $ \IR^{e+r}_{ \geq 0 } $ containing all elements of the set
			$$
			\left\{ \frac{ (A, B)}{ b } + \IR^{e+r}_{ \geq 0 }  \;\bigg|\; 1 \leq i \leq m \,\wedge\, C_{ A, B, i }  \neq 0 \,\wedge\, | B | \leq b \right\}.
			$$ 
			Let $ \IE' $ be another pair on $ Z $ which is singular at $ x $.
			Then the polyhedron $ \Delta^\NW (\IE \cap \IE', u, y) \subset \IR^{e+r}_{ \geq 0 } $ is  the smallest closed convex subset of $ \IR^{e+r}_{ \geq 0 } $ containing $ \Delta^\NW (\IE, u, y) $ and $ \Delta^\NW (\IE', u, y) $.
			
			\smallskip 
			
			\item[(2)]	We define the {\em projected polyhedron}
			$ \Delta (\IE; u; y) $ (or $ \Delta_x (\IE; u; y) $) of $ \IE = (J,b) $ at $ x $ \wrt $ ( u, y ) $ as  the smallest closed convex subset of $ \IR^e_{ \geq 0 } $ containing all elements of the set
			$$
			\left\{ \frac{ A }{ b - | B | } + \IR^e_{ \geq 0 }  \;\bigg|\; 1 \leq i \leq m \,\wedge\, C_{ A, B, i }  \neq 0 \,\wedge\, | B | < b \right\}.
			$$
			Note that $ \Delta (\IE \cap \IE', u, y) \subset \IR^e_{ \geq 0 } $ is the smallest closed convex subset containing $ \Delta (\IE; u; y) $ and $ \Delta (\IE'; u; y) $.
		\end{enumerate}
		If there is no confusion possible, we just say $ \Delta^\NW (\IE, u, y) $ is the Newton polyhedron of $ \IE $ and $ \Delta (\IE; u; y) $ is the polyhedron of $ \IE $.
	\end{Def}
	%
	

	\begin{Lem}
		\label{Lem:DeltaNindepGen}
		The Newton polyhedron does not depend on the choice of the generating set $ ( f ) = (f_1, \ldots, f_m ) $ of $ J $.
		More precisely,
		$ \Delta^\NW (\IE, u, y) $ coincides with  $ \Delta ( \widetilde{S}) $, which we define to be the smallest closed convex set containing all elements of the set
		$$ 
		\widetilde{S} :=
		\left\{ \frac{ (\widetilde{A},\widetilde{B}) }{ b } + \IR^{e+r}_{ \geq 0 } \;\bigg|\, 
		\begin{array}{c}
		\exists\, g = \sum_{(A,B)} \coeff_{A,B} \, u^A \, y^B \in J \,:\\
		C_{ \widetilde{A},\widetilde{B}} \neq 0  \,\wedge\, | \widetilde{ B } | \leq b
		\end{array}
		\, \right\}.
		$$
	\end{Lem}
	
	\begin{proof}
		Since $ f_1, \ldots, f_m \in J $, we get the inclusion
		$ 
		\Delta^\NW (\IE, u, y) \subseteq \Delta (  \widetilde{ S }).
		$
		On the other hand, let $ g \in J = \langle f_1, \ldots, f_m \rangle $.
		Then $ g = \sum_{i = 1}^m \lambda_i f_i $ for some $ \lambda_i \in R $. 
		Therefore we get that, for every $ (A,B) \in \IZ^{e+r}_{ \geq 0 } $ appearing with non-zero coefficient in some $ g $ and with $ | B | \leq b $, we have $ \frac{ ({A},{B}) }{ b } \in  \Delta^\NW (\IE, u, y) $.
		This completes the proof.
	\end{proof}
	
	\begin{Prop}
		\label{Prop:PolyPorojection}
		The polyhedron $ \poly \IE uy $ associated to a pair $ \IE = (J, b )  $ on $ R $ is a certain projection of the corresponding Newton polyhedron $ \Delta^\NW ( \IE, u, y) $.
	\end{Prop}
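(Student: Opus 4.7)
The plan is to exhibit the projection explicitly and then check that it intertwines the two polyhedra. Splitting coordinates as $\IR^n_{\geq 0} = \IR^e_{\geq 0} \times \IR^r_{\geq 0}$ and writing $v = (v_u, v_y)$, I define
\begin{equation*}
\pi \colon \{ v \in \IR^n_{\geq 0} : |v_y| < 1 \} \longrightarrow \IR^e_{\geq 0}, \qquad \pi(v_u, v_y) := \frac{v_u}{1 - |v_y|},
\end{equation*}
where $|v_y| = (v_y)_1 + \cdots + (v_y)_r$. Geometrically, $\pi$ is a central projection from the affine hyperplane $\{|v_y| = 1\}$ onto the coordinate subspace $\{v_y = 0\} \cong \IR^e_{\geq 0}$.

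First I would record the key identity
\begin{equation*}
\pi\!\left(\frac{(A,B)}{b}\right) \;=\; \frac{A/b}{1 - |B|/b} \;=\; \frac{A}{b - |B|},
\end{equation*}
valid for every $(A,B) \in \IZ^n_{\geq 0}$ with $|B| < b$. In particular $\pi$ sets up a bijection between the part of the generating set $S(f,b;u,y)$ of $\Delta^\NW(\IE,u,y)$ with $|B| < b$ and the generating set $S_\ast(f,b;u,y)$ of $\Delta(\IE,u,y)$ from (\ref{eq:PolyIdExpConcrete}). I would then prove the equality
\begin{equation*}
\Delta(\IE,u,y) \;=\; \pi\bigl(\Delta^\NW(\IE,u,y) \cap \{|v_y|<1\}\bigr).
\end{equation*}
For the inclusion $\supseteq$ I rely on the fact that $\pi$ is convexity-preserving on its domain: a direct computation shows that for $v^{(1)}, v^{(2)}$ in the domain and $t \in [0,1]$ one has $\pi\bigl(tv^{(1)} + (1-t)v^{(2)}\bigr) = \lambda_1 \pi(v^{(1)}) + \lambda_2 \pi(v^{(2)})$ with nonnegative weights $\lambda_1, \lambda_2$ summing to $1$ (the weights are the quantities $t(1-|v_y^{(1)}|)$ and $(1-t)(1-|v_y^{(2)}|)$, suitably normalized). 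Combined with the key identity this places every $\pi$-image of a point of $\Delta^\NW(\IE,u,y) \cap \{|v_y|<1\}$ inside $\Delta(\IE,u,y)$. For $\subseteq$, any point $A^\ast + w \in \Delta(\IE,u,y)$ with $A^\ast = A/(b-|B|)$ a generator and $w \in \IR^e_{\geq 0}$ equals $\pi\bigl((A + (b-|B|)w,\, B)/b\bigr)$, and the preimage lies in $\Delta^\NW(\IE,u,y) \cap \{|v_y|<1\}$ by the $\IR^n_{\geq 0}$-stability of the Newton polyhedron.

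The main subtlety will be the non-linearity of $\pi$: a priori the image of a convex polyhedron under such a map need not be convex at all. The reparametrization used in the $\supseteq$ step is precisely what bypasses this issue. Once it is in place, the remainder of the proof is a routine unpacking of Definition \ref{Def:NewtonPolyandPolyNonIntrinsic} together with Lemma \ref{Lem:DeltaNindepGen} to conclude that the generating points of the two polyhedra correspond under $\pi$.
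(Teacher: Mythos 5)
Your map $\pi(v_u,v_y)=v_u/(1-|v_y|)$ is exactly the composite of the $r$ single-coordinate central projections from $(0,\ldots,0,1)$ that the paper's proof invokes (one for each $y_j$): applying them one at a time to $\frac{(A,B)}{b}$ gives $\frac{(A,B_1,\ldots,B_{r-1})}{b-B_r}$, then $\frac{(A,B_1,\ldots,B_{r-2})}{b-B_r-B_{r-1}}$, and so on down to $\frac{A}{b-|B|}$, so the underlying geometric idea is the same; you simply write it as a single rational map. The identity $\pi\bigl(\frac{(A,B)}{b}\bigr)=\frac{A}{b-|B|}$, the weight formula $\lambda_i \propto t_i(1-|v^{(i)}_y|)$, and the $\subseteq$ step are all correct.

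There is, however, a gap in your $\supseteq$ step. Convexity-preservation of $\pi$ only lets you handle convex combinations of points that already lie in the domain $\{|v_y|<1\}$. But a point $v\in\Delta^\NW(\IE,u,y)\cap\{|v_y|<1\}$ is, in general, a convex combination $\sum t_i v^{(i)}$ in which some constituent $v^{(i)}$ has $|v^{(i)}_y|\ge 1$; for instance, every generator $\frac{(A,B)}{b}$ with $|B|=b$ (these are allowed, since the definition of $\Delta^\NW$ requires only $|B|\le b$) sits on the hyperplane $\{|v_y|=1\}$, yet points just inside the polyhedron near such a vertex do have $|v_y|<1$. For those terms your weight $t_i(1-|v^{(i)}_y|)$ is zero or negative, and $\pi(v^{(i)})$ is not even defined, so the convex-combination argument as written does not apply. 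The conclusion is still true, but it needs a different justification. One clean fix: for any supporting inequality $\ell(w)\ge c$ of $\Delta(\IE,u,y)$ with $\ell$ having nonnegative coefficients and $c\ge 0$, check directly that every generator $\frac{(A,B)}{b}+\IR^n_{\ge 0}$ (including those with $|B|=b$) satisfies $\ell(v_u)+c\,|v_y|\ge c$; since this inequality is linear and monotone, it holds on all of $\Delta^\NW(\IE,u,y)$, and dividing by $1-|v_y|>0$ gives $\ell(\pi(v))\ge c$. Intersecting over all such half-spaces yields $\pi(v)\in\Delta(\IE,u,y)$. Alternatively, one can carry out your bookkeeping explicitly and observe that the terms with $|v^{(i)}_y|\ge 1$ contribute a nonnegative vector plus a scaling factor $\ge 1$, both of which $\Delta(\IE,u,y)$ absorbs because it is stable under $+\IR^e_{\ge 0}$. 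Either patch closes the gap without changing your overall strategy, which otherwise matches the paper's.
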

	
	\begin{proof}
		This follows immediately by investigating how the projection of a point $ \frac{( A,B)}{b} $ from $ (0, \ldots, 0, 1) \in \IR^{e+r}_{ \geq 0 } $ onto $ \IR^{ {e+r}-1} \times \{ 0 \}$ is determined.
		Applying this several times we obtain the assertion.	
		For more details see \cite[Proposition 2.1.3 and Lemma 2.4.1]{BerndThesis}.
	\end{proof}
	
	\begin{Cor}
		\label{Cor:DeltaIEuyIndepGenerators}
		The polyhedron $ \poly \IE uy $ of a pair $ \IE = ( J, b) $ is independent of the chosen set of generators $ ( f ) = ( f_1, \ldots, f_m ) $. 
	\end{Cor}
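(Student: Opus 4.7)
The plan is to deduce this corollary directly by combining the two preceding results, namely Lemma \ref{Lem:DeltaNindepGen} and Proposition \ref{Prop:PolyPorojection}. There is essentially nothing new to prove; the point is only to observe that ``taking the projection'' commutes with ``varying the generating set'', because the projection described in Proposition \ref{Prop:PolyPorojection} is a purely geometric operation on subsets of $\IR^n_{\geq 0}$ that depends on $b$ and on the distinguished last coordinates, but not on which set of generators of $J$ one started with.

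More precisely, the first step is to invoke Lemma \ref{Lem:DeltaNindepGen}, which tells us that the Newton polyhedron $\Delta^{\NW}(\IE, u, y)$ coincides with $\Delta(\widetilde{S}(\IE, u, y))$, a set built only from the ideal $J$ itself, and in particular does not depend on the chosen generators $(f_1, \ldots, f_m)$. The second step is to apply Proposition \ref{Prop:PolyPorojection}, which exhibits $\Delta(\IE, u, y)$ as the iterated projection of $\Delta^{\NW}(\IE, u, y)$ from the unit vectors corresponding to the $y$-coordinates onto the subspace $\IR^e \times \{0\}^r$. Since both the object (the Newton polyhedron) and the projection (depending only on $b$ and on the choice of splitting $(u, y)$) are independent of $(f)$, so is their composite $\Delta(\IE, u, y)$.

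As an alternative, and essentially equivalent, route one may argue directly from the description (\ref{eq:PolyIdExpConcrete}): repeat the argument of Lemma \ref{Lem:DeltaNindepGen} one dimension lower. Namely, any $g \in J$ is an $R$-linear combination of the $f_i$, so its expansion contributes points $(A,B)$ to $S_\ast(g, b; u, y)$ only at locations already dominated by points coming from some $f_i$ with $|B| < b$; conversely, the $f_i$ themselves lie in $J$. Thus
\[
\Delta\bigl(S_\ast(f, b; u, y)\bigr) \;=\; \Delta\bigl(\widetilde{S}_\ast(\IE, u, y)\bigr),
\]
where the right-hand side is the intrinsic variant ranging over all $g \in J$. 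This gives generator-independence of $\Delta(\IE, u, y)$ without passing through the higher-dimensional Newton polyhedron.

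I do not expect a genuine obstacle in either approach; the only subtle point, which is already addressed in Lemma \ref{Lem:DeltaNindepGen}, is that multiplying an $f_i$ by an element $\lambda \in R$ and then summing can only add new vertices that lie in the convex hull of the $S(f_i, b; u, y)$, so the generated polyhedron is unchanged. Thus the cleanest write-up is the first approach: cite Lemma \ref{Lem:DeltaNindepGen} for generator-independence of $\Delta^{\NW}$, cite Proposition \ref{Prop:PolyPorojection} to realise $\Delta(\IE, u, y)$ as a canonical projection thereof, and conclude.
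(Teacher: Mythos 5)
Your first (and recommended) route is exactly the paper's proof: Corollary \ref{Cor:DeltaIEuyIndepGenerators} is stated there as an immediate consequence of Lemma \ref{Lem:DeltaNindepGen} together with Proposition \ref{Prop:PolyPorojection}, just as you argue. The alternative direct argument you sketch is a harmless variant but not needed.
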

	
	\begin{proof}
		This is an immediate consequence of Lemma \ref{Lem:DeltaNindepGen} and Proposition \ref{Prop:PolyPorojection}.
	\end{proof}

	The Newton polyhedron as well as the projected polyhedron depend heavily on the choice of the system $ (y) $ (for fixed $ ( u ) $).
	Further, they are not necessarily invariant under the equivalence relation $ \sim $ .
	
	\begin{Ex}
		Consider the pair 
		$$ 
		\IE = ( y^2 + u_1^7 u_2^3, 2 ) = ( z^2 + 2 z u_1 + u_1^2 + u_1^7 u_2^3, 2 ) 
		$$
		over any field $ K $, where $ y := z + u_1 $ and the point of interest $ x $ is the origin.
		As we see in the picture, the projected polyhedra differ:  
		\[
		\begin{tikzpicture}[scale=0.7]
		
		\draw[<->, thick] (0,3.3)--(0,0)--(6.7,0);
		
		\foreach \x in {0,...,6}
		\draw (\x,0.1) -- (\x,-0.1) node [below] {\x};
		
		\foreach \x in {1,...,3}
		\draw (0.1,\x) -- (-0.1,\x) node [left] {\x};

		\fill[lightgray] (1,0)--(6.5,0)--(6.5,3)--(1,3);
		\filldraw[cyan] (1,0) circle (2.5pt);
		\filldraw[cyan] (3.5,1.5) circle (2.5pt);
		\draw[very thick,cyan] (1,3.05)--(1,0)--(6.55,0);
			
		\node at (3.5,-1.2) {$ \Delta ( \IE, u, z) $};

		\end{tikzpicture} 
		\hspace{25pt}
		\begin{tikzpicture}[scale=0.7]
		
		\draw[<->, thick] (0,3.3)--(0,0)--(6.7,0);

		\foreach \x in {0,...,6}
		\draw (\x,0.1) -- (\x,-0.1) node [below] {\x};
		
		\foreach \x in {1,...,3}
		\draw (0.1,\x) -- (-0.1,\x) node [left] {\x};
		
		\fill[lightgray] (3.5,1.5)--(6.5,1.5)--(6.5,3)--(3.5,3);
		\filldraw[cyan] (3.5,1.5) circle (2.5pt);
		\draw[very thick,cyan] (3.5,3.05)--(3.5,1.5)--(6.55,1.5);	
		
		\node at (3.5,-1.2) {$ \Delta ( \IE, u, y) $};

		\end{tikzpicture} 
		\]
	\end{Ex}	
	
	\begin{Ex}
		\label{Ex:PolyNotUnique}
		The Newton polyhedron and the polyhedron of $ \IE $ may change under the equivalence $ \sim $\,.
		The origin of this example is \cite[Example 5.14, p.788]{BMexc} and it has been slightly modified and worked out for our setting together with Vincent Cossart.
		
		Let $ K = \IC $, $ d \in \IZ_+ , d \geq 2 $.
		We look at the origin of $ \IA^4_\IC $. 
		Consider the two pairs 
		$$
		\begin{array}{l}
		\IE_1 = (z^d - x^{d-1} y^{d-1},\, d) \;\cap\; (t,\, 1)  	\\[5pt]
		\IE_2 = (z^d - x^{d-1} y^{d-1},\, d) \;\cap\; (t^{d-1}-x^{d-2} y^{d-1},\, d-1 )
		\end{array} 
		$$
		First, $ (t, z) $ yields the directrix in both cases; 
		therefore $ (u) = (x,y) $ and $ (y) = (t,z) $.
		
		The set of vertices of $ \Delta ( \IE_1; t,z; x,y) $ is 
		$
		V_1 = \left\{\, \left(\frac{d-1}{d},\, \frac{d-1}{d}\right) \right\} 
		$
		and the one for $ \Delta ( \IE_2; t,z; x,y) $ is 
		$
		V_2 = \left\{\, \left(\frac{d-1}{d},\, \frac{d-1}{d}\right);\, \left(\frac{d-2}{d-1},\, 1\right) \right\}  .
		$
		Clearly the polyhedra are different which implies that also the Newton polyhedra differ.
		
		From the Diff Theorem, Proposition \ref{Prop:Diff}, we obtain by applying the differential operators $ \frac{\partial}{\partial x}$ and $ \frac{\partial^{d-2}}{\partial t^{d-2}}$ that
		$$
		\IE_1 \sim (z^d - x^{d-1} y^{d-1},\, d) \cap ( x^{d-2} y^{d-1},\, d - 1)  \cap (t,\, 1) \sim \IE_2.
		$$
		Therefore $ \IE_1 $ and $ \IE_2 $ are two equivalent pairs whose associated polyhedra differ!
		The picture for $ d = 2 $ looks as follows:
		
			\[
		\begin{tikzpicture}[scale=0.55]
		
		\draw[<->, thick] (0,6.7)--(0,0)--(6.7,0);
		
		\foreach \x in {0,1}
		{
		\draw (4*\x,0.1) -- (4*\x,-0.1) node [below] {\x};
		\draw (0.1, 4*\x) -- (-0.1,4*\x) node [left] {\x};
		}

		\draw (2,0.1) -- (2,-0.1) node [below] {1/2};
		\draw (6,0.1) -- (6,-0.1) node [below] {3/2};
		\draw (0.1,2) -- (-0.1,2) node [left] {1/2};
		\draw (0.1,6) -- (-0.1,6) node [left] {3/2};

		\fill[lightgray] (2,2)--(6.5,2)--(6.5,6.5)--(2,6.5);
		\filldraw[cyan] (2,2) circle (2.5pt);
		\draw[very thick,cyan] (2,6.55)--(2,2)--(6.55,2);	
		
		\node at (3.5,-1.45) {$ \Delta ( \IE_1; t,z; x,y) $};
	
		\end{tikzpicture} 
		\hspace{25pt}
		\begin{tikzpicture}[scale=0.55]
		
		\draw[<->, thick] (0,6.7)--(0,0)--(6.7,0);
		
		\foreach \x in {0,1}
		{
			\draw (4*\x,0.1) -- (4*\x,-0.1) node [below] {\x};
			\draw (0.1, 4*\x) -- (-0.1,4*\x) node [left] {\x};
		}
		
		\draw (2,0.1) -- (2,-0.1) node [below] {1/2};
		\draw (6,0.1) -- (6,-0.1) node [below] {3/2};
		\draw (0.1,2) -- (-0.1,2) node [left] {1/2};
		\draw (0.1,6) -- (-0.1,6) node [left] {3/2};

		\fill[lightgray] (0,4)--(2,2)--(6.5,2)--(6.5,6.5)--(0,6.5);
		\filldraw[cyan] (2,2) circle (2.5pt);
		\filldraw[cyan] (0,4) circle (2.5pt);
		\draw[very thick,cyan] (0,6.55)--(0,4)--(2,2)--(6.55,2);	
		
		\node at (3.5,-1.45) {$ \Delta ( \IE_2; t,z; x,y) $};
		
		\end{tikzpicture} 
		\]
	\end{Ex}
	
	\smallskip
	
	The last example plays also a crucial role if there exist exceptional components of a resolution process.
	It forces us in \cite{BerndBM} (or see also \cite{BerndThesis}) to introduce idealistic exponents with history, which take the preceding resolution process into account.
	
	\medskip
	
	We overcome the dependence on $ ( y) $ by introducing the characteristic polyhedron.
	For this, let us recall the construction of Hironaka's characteristic polyhedron.
	More detailed references are \cite[section 7]{CJS}, \cite[section 2.2]{BerndThesis}, or Hironaka's original work \cite{HiroCharPoly}.
	
	\smallskip
	
	Let $ ( R,  \maxIdeal, \resfield = R / \maxIdeal) $ be a regular local ring, $ J \subset R $ be a non-zero ideal, and $ ( u , y ) = ( u_1, \ldots, u_e; y_1, \ldots, y_r ) $ be a regular system of parameters of $ R $.
	Let $ \hR $ be the $ M $-adic completion of $ R $.
	Note that so far we do not make any other assumptions on $ ( u,y) $, for example, we do not suppose that $ ( y) $ is related to the directrix of $ J $.
	Set $ R' := R / \langle u \rangle $ and $ J' := J \cdot R' $.
	
	\begin{Def}
		\begin{enumerate}
			\item
			Let $ f \in R $ be an element in $ R $, $ f \notin \langle u \rangle $.
			We expand $ f $ in a \emph{finite} sum as in \eqref{expansion},
			$$
			f = \sum_{(A,B) \in \IR^{e + r }_{\geq 0 } } C_{A,B} \, u^A \, y^B
			$$
			with coefficients $ C_{A,B} \in R^\times \cup \{ 0 \} $.
			We define $ n := n_{(u)} ( f ) $ to be the order of $ f \mod \langle u \rangle $ in the ideal generated by $ \overline{ y_j } = y_j \mod \langle u \rangle $, $ j \in \{ 1, \ldots, r \} $.
			The {\em polyhedron $ \poly fuy $ associated to $ ( f ,u, y ) $} is defined as the smallest closed convex subset of $ \IR^e_{ \geq 0 } $ containing all elements of the set
			$$
			\left\{
			\;	\frac{A}{n - |B|} + \IR^e_{\geq 0 } \;\bigg| \; C_{A,B} \neq 0 \, \wedge \, |B| < n
			\;
			\right\} . 
			$$	
			
			\item
			Let $ ( f ) = ( f_1, \ldots, f_m ) $ be a system of elements in $ R $ with $ f_i \notin \langle u \rangle $, for every $ i $.
			The {\em polyhedron $ \poly fuy $ associated to $ ( f, u, y ) $} is defined as the smallest closed convex subset of $ \IR^e_{ \geq 0 } $ containing $ \bigcup_{ i = 1 }^m \poly{f_i}{u}{y} $. 
		\end{enumerate}	
	\end{Def}
	
	The polyhedron $ \poly fuy $ clearly depends on the choice of the system $ (f ) = ( f_1, \ldots, f_m ) $.
	A special class of system of generators for an ideal $ J $ are so called $ ( u ) $-standard bases (see \cite[Definition (2.20)]{HiroCharPoly}).
	Since the polyhedra $ \poly \IE uy $ are independent of the choice of the generators (Corollary \ref{Cor:DeltaIEuyIndepGenerators}) we do not recall this technical definition.
	We only remark that they are generators $ ( f ) = ( f_1, \ldots, f_m ) $ of $ J $ such that $ f_i \notin \langle u \rangle $ and which are ordered by the order of $ f \mod \langle u \rangle $, and moreover $ m $ is as small as possible.
	
	\begin{Def}
		Let $ (0) \neq J \subset R $ be an ideal and $ ( u ) = (u_1, \ldots, u_e ) $ be a system of elements as before.	
		We define
		$$
		\cpoly Ju = \bigcap_{(\hy)} \bigcap_{ (\hf) }  \poly \hf u\hy,
		$$
		where the first intersection ranges over all systems $ ( \hy ) $ extending $ ( u ) $ to a regular system of parameters of $ \hR $ and the second runs over all possible $ ( u ) $-standard bases $ ( \hf ) = ( \hf_1, \ldots, \hf_m ) $ of $ \widehat{J} := J \cdot \hR  $.
		The polyhedron $ \cpoly Ju $ is called the characteristic polyhedron of $ J $ \wrt $ ( u ) $.
	\end{Def}
	
	Note that the above is an ad-hoc definition. 
	In \cite{HiroCharPoly}, the characteristic polyhedron $ \cpoly Ju $ is defined without using the completion. 
	
	Suppose we have any $ ( u ) $-standard basis $ ( f ) $ for $ J $ and a system $ ( y ) $  extending $ ( u ) $ to a regular system of parameters of $ R $ given.
	Further, assume that the images of $  ( y ) $ in $ R' $ define the directrix of $ J $.
	In \cite[Theorem (4.8)]{HiroCharPoly}, Hironaka proves that 
	one can construct from this data a suitable $ ( u ) $-standard basis $ ( \widehat f ) $ of $ J \widehat R $ and a system of elements $ ( \widehat y ) $ extending $ ( u ) $ to a regular system of parameters for $ \widehat R $ such that 
	\begin{equation}
	\label{eq:Hiro_char_computed}
	\poly{\hf}{u}{\hy} = \cpoly Ju.
	\end{equation}

	In \cite{CSCcompl} Cossart and the author extend a result of \cite{CPcompl} investigating under which conditions it is possible to attain the characteristic polyhedron without passing to the completion.
	
	\smallskip
	
	We imitate the definition of $ \cpoly Ju $ in order to define $ \cpoly \IE u $:
	
	\begin{Def}
		Let $ (J, b) $ be a pair on $ R $ and let $ ( u ) = (u_1, \ldots, u_e ) $ be a system of regular elements that can be extended to a regular system of parameters of $ R $.	
		We define
		$$
		\cpoly \IE u := \bigcap_{(\hy)} \poly \IE u\hy,
		$$
		where the intersection ranges over all systems $ ( \hy ) $ extending $ ( u ) $ to a regular system of parameters of $ \hR $.
		We call $ \cpoly \IE u $ the \emph{characteristic polyhedron of the pair $ \IE $ \wrt $ ( u ) $}.
	\end{Def}
	
	In order to prove the analog to \eqref{eq:Hiro_char_computed}, 
	we introduce 
	
	\begin{Def}
		Let $ \IE = ( J, b ) $, $ R $, and $ ( u ) $ as before.
		Let $ ( y ) $ be a system extending $ ( u ) $ to a regular system of parameters for $ R $
		and let $ ( f_1 , \ldots, f_m) $ be a set of generators for $ J $.
		Let $ v \in \poly \IE uy \subset \IR^e_{\geq 0 } $ be a vertex.
		
		\begin{enumerate}
			\item
			The {\em $ (v,b) $-initial form} of an element $ f = \sum C_{A,B} u^A y^B \in J $ (expansion as in \eqref{expansion}) 
			is defined as 
			$$	 
			\ini_v ( f,b)_{u,y} = \sum \overline{C_{A,B}} \, U^A Y^B \in K[U,Y]
			$$
			where the sum ranges over those $ ( A,B) $ such that, $ |A| = 0 $ and $ |B| = b $, or $ \frac{A}{b - |B|} = v $,
			and $ \overline{C_{A,B}} := C_{A,B} \mod M \in K$.
			
			\smallskip 
			
			\item 
			The vertex $ v \in \poly \IE uy $ is called \emph{solvable} if 
			there exist constants $ \lambda := (\lambda_1, \ldots, \lambda_r) \in K^r $ and polynomials $ P_i \in K[X_1, \ldots, X_r] $, for $ 1 \leq i \leq m $, such that
			$$
			\ini_v(f_i, b)_{u,y}  = P_i (Y - \lambda U^{ v}),
			$$
			where we abbreviate $ Y - \lambda U^{v} := ( Y_j - \lambda_j U_1^{v_1} \cdots U_e^{v_e})_{1\leq j \leq r} $. 
			
		\end{enumerate}
	\end{Def}
	
	If $ v $ is solvable then $ v \in \IZ^e_{\geq 0} $.
	Further, we can choose lifts $ \rho_j \in R^\times \cup \{ 0 \} $ such that $ \rho_j \equiv \lambda_j \mod M $ and define 
	$$
	z_j := y_j - \rho_j u^{v},
	\ \ 1 \leq j \leq r.
	$$
	
	\red{In \cite{HiroCharPoly}, Hironaka makes two preparations on the initial forms at a vertex: 
	normalization and dissolution.
	While we discuss the latter here, the normalization is not necessary since it is a procedure to optimize the choice of the generators and our variant of the characteristic polyhedron for idealistic exponents does not depend on the choice of the generators by Corollary~\ref{Cor:DeltaIEuyIndepGenerators}.}
	
	\begin{Lem}
		\label{Lem:cleaning}
		Suppose $ v \in \poly \IE uy $ is a solvable vertex.
		Using the previous notation, we have:
		\begin{enumerate}
			\item $ v \notin \poly \IE uz $.
			
			\smallskip 
			
			\item If $ w \in \poly \IE uy $ is a vertex with $ w \neq v $, then $ w $ is also a vertex of $\poly \IE uz $.
			
			\smallskip 
			
			\item $ \poly \IE uz \subset \poly \IE uy $.
		\end{enumerate} 
	\end{Lem}
	
	\begin{proof}
		Let $ u^A y^B $ be a monomial appearing with non-zero coefficient in an expansion of some $ f_i $.
		After the change from $ ( y) $ to $ ( z ) $, we get
		$$
		u^A (z + \rho u^{v} )^B
		=
		\sum_{C=0}^{B} \binom{B}{C} \rho^C u^A ( u^{v})^{|C|} z^{B-C} 
		$$
		where $ C = (C_1, \ldots, C_r ) $, and we use the abbreviations
		$ \sum\limits_{C=0}^{B}  := \sum\limits_{C_1=0}^{B_1} \cdots \sum\limits_{C_r=0}^{B_r} $, and
		$ \binom{B}{C} := \binom{B_1}{C_1} \cdots \binom{B_r}{C_r} $, 
		and $ \rho^C := \rho_1^{C_1} \cdots \rho_r^{C_r} $. 
		Therefore, the points coming from $ ( A,B) $ that possibly appear in $ \poly \IE uz $ are
		\begin{equation}
		\label{eq:possible_vertex}
		w' :=
		\dfrac{A +  v \cdot |C| }{b - (|B|-|C|)}
		= 
		\dfrac{b-|B|}{b - |B| + |C| } \cdot \dfrac{A}{b-|B|}
		+ \dfrac{|C|}{b - |B|+|C|} \cdot  v.
		\end{equation}
		By the definition of the projected polyhedron, this can only provide a point in $ \poly \IE u z $ if $ |B|- |C| <  b $. 
		Hence we assume from now on $ |B|- |C| <  b $.

		If $ |B| > b $, then $ |C| > b - |B| + |C| $ and the point $ w' $ (potentially lying in $ \poly \IE u z $) fulfills $ w' \in v + \IR^e_{\geq 0} $, but $ w' \neq v $, by the second summand on the right in \eqref{eq:possible_vertex}.
		The same is true if $ |B| = b $ and $ |A| \neq 0 $.
		
		If $ |B| < b $ then the monomial $ u^A y^B $ provides the point $  w := \frac{A}{b - |B|} $ in $ \poly \IE uy $.
		If $ |C| \neq 0 $, then \eqref{eq:possible_vertex} shows that the associated point $ w' $  must be on the line connecting $ w $ and $ v $, but $ w' \neq w $ and $ w' \neq v $.
		For $ C = (0, \ldots, 0 ) $, we get the monomial $ u^A z^B $.
		
		In conclusion, under the above conditions the points that possibly newly appear in $ \poly \IE u z $ are contained in $ \poly \IE uy $, but are not equal to $ v $.
		
		It remains to consider the monomials with $|B| = b $ and $ |A|=0 $, or with $ \frac{A}{b-|B|} = v $.
		These are precisely those appearing in $ \ini_v(f_i,b) $.
		By definition of $ (z) $, we eliminate them all, when we pass from $ ( y ) $ to $ ( z ) $ and terms newly created from them are contained in $ v + \IR^e_{\geq 0} $, but $ \neq v $.
		
		Combining the latter with the conclusion provides that we have after the change to $ ( z ) $:
		There is no monomial contributing to $ v $, i.e., $ v\notin \poly \IE uz $.
		Further, if $ w \in \poly \IE uy $, $ w \neq v $, is a vertex, then so it is in $ \poly \IE uz $.
		Finally, every newly created vertex of $ \poly \IE uz $ is contained in $ \poly \IE uy $, i.e., $ \poly \IE uz \subset \poly \IE uy $.	
		This ends the proof.
	\end{proof}		
	
	\smallskip
	
	The following example show that, in general, it may happen that a vertex is not solvable, but it can be eliminated via a change in $ ( y ) $.
	
	\begin{Ex}
		\label{Ex:AssumpThmNecessary}
		Consider the pair $ \IE = ( \langle f_1, f_2 \rangle, 2 ) $ over a field $ \resfield $,  $ \car \resfield = p \geq 3 $, given by 
		$$ 
		f_1 ( u, y ) = y_1^2 + h_1 (u_1) 
		\hspace{10pt} \mbox{ and } \hspace{10pt}
		f_2 ( u, y ) = u_3 y_2 + ( y_2 + u_2^n )^p + h_2 (u_1).
		$$
		for some $ h_1, h_2 \in \resfield [u_1] $ and an integer $ n \in \IZ_+ $, $ n \geq 2 $.
		The system $ ( y_1, y_2, u_3) $ defines the directrix $ \Dir_x ( \IE ) $. 
		We assume that $ h_1 ( u_1 ) $ is such that $ \poly {f_1}uy = \cpoly{ f_1 }u $ has no solvable vertex.
		Then there is also no solvable vertex in $  \poly{ (f, 2) }uy $.
		
		Consider the system $ ( z ) = ( z_1, z_2 ) := ( y_1, y_2 + u_2^n ) $. 
		We get that 
		$$ 
		f_1 ( u, z ) = z_1^2 + h_1 (u_1) 
		\hspace{10pt} \mbox{ and } \hspace{10pt}
		f_2 ( u, z ) = u_3 z_2 - u_2^n u_3 + z_2^p + h_2 (u_1)
		$$
		and still there are no solvable vertices for $ \poly{ f_1 }uz  =  \poly{ f_1 }uy = \cpoly{ f_1}u $ and $  \poly{ (f, 2) }uz $.
		On the other hand, set $ v := \left( 0, \frac{ n p }{ 2 }, 0 \right) $ and $ w :=  \left( 0, \frac{ n }{ 2 }, \frac{ 1 }{ 2 } \right) $.
		We have that 
		$$ 
		(0,0,1), v \in \poly{(f, 2)}{ u}{ y},
		\ \ \ w \notin  \poly{ (f, 2) }uy,
		$$  
		$$ 
		(0,0,1), w \in  \poly{ ( f, 2)}uz,
		\ \ \  v \notin \poly{( f, 2) }uz.
		$$
		
		Hence, the polyhedra $ \poly{ (f, 2) }uy  $ and $ \poly{( f, 2 ) }uz $ are essentially different.
		In general, it is not possible to make $ \Delta ( (f, b) ; u, y ) $ independent of the choice of the system $ ( y ) $ (with our definitions).

		Nonetheless, we may obtain intrinsic information from the polyhedra.
		Namely, in both cases the point $ (0, 0, 1 ) $ appears in the polyhedra and we have (cf.~Definition \ref{def:delta(Delta)} and Lemma \ref{Lem:DeltaEins})
		$$
		\begin{array}{rl}
		\min\{ |\widetilde v| = \widetilde v_1 + \widetilde v_1 \mid \widetilde v = (\widetilde v_1, \widetilde v_2)\in \poly{( f, 2) }uy \} 
		& = \\[5pt]
		= \min\{ |\widetilde w| \mid \widetilde w \in \poly{( f, 2) }uz \}
		& = 1.
		\end{array}
		$$
	\end{Ex}
	
	\smallskip
	
	In order to be able to speak of a minimal polyhedron, we have to put additional conditions on the system $ ( y )$ that we are starting with.
	For Hironaka's characteristic polyhedron this problem is overcome by imposing on the system $ (y) $ that its images in $ R' $ define the directrix of $ J' $ in $ R' $.
	This is a crucial assumption that holds for all interesting cases when studying singularities.

	\begin{Not}
		\label{nota:E'}
		Given $ \IE= (J,b) $ on $ R $, we define 
		$ \IE' := (J', b) $ to be the pair on $ R' = R/ \langle u \rangle $ induced by $ J' = J R' $.
		Denote by $ x' $ the point in $ \Spec(R') $ defined by the ideal generated by images of $ (y) $ in $ R' $.
		In particular, we can speak of the directrix $ \Dir_{x'} (\IE' )$.
	\end{Not} 
	
	\begin{Lem}
		\label{lem:cleaning2}
		Let $ \IE $ be a pair on a regular local ring $ R $ and
		let $ ( u ,y ) $ be a regular system of parameters for $ R$ such that the initial forms of $ ( y ) $ define the directrix of $ \Dir_{x'}( \IE') $.
		Let $ v \in \poly \IE uy $ be a vertex.
		Then we can eliminate $ v $ via a change $ z_j := y_j - r_j u^{v} $ (i.e., $v \notin \poly \IE uz $), for $ r_j \in R^\times \cup \{ 0 \} $, $ 1 \leq j \leq r $, if and only if $ v $ is solvable. 
	\end{Lem}

	\begin{proof}
		Suppose $ v $ is not solvable, but the change $ z_j := y_j - r_j u^{v} $ eliminates $ v $.
		Let $ ( f_1 \ldots, f_m ) $ be a system of generators for $ J $. 
		Since $ ( y ) $ determines $ \Dir_{x'} (\IE') $, there exists for every $ y_j $ some $ f_{i(j)} $ such that $ B_j \neq 0  $ in a monomial $ y^B $ which appears with non-zero coefficient in an expansion of $ f_{i(j)} $ as in \eqref{expansion}.
		Since $ v $ is not solvable, there exist no polynomials $ P_i(Z) $ such that 
		$ \ini_v(f_i,b)_{u,z} = P_i(Z) $, for all 
		$ i \in \{ 1, \ldots, m \} $.
		In other words, there is at least one $ i $ with	$ \ini_v(f_i,b)_{u,z} \notin K[Z] $.
		But then, by definition, $ v \in \poly \IE uz $ which is a contradiction.	
		
		The if-part follows by definition and this completes the proof.
	\end{proof}

	\begin{Thm}[Theorem \ref{MainThm:CharPolyAchieved}]
		\label{Thm:BerndHiro}
		Let $ \IE = (J,b) $ be a pair on a regular local ring $ R $ and let $ ( u, y ) = ( u_1, \ldots, u_e; y_1, \ldots, y_r ) $  be a regular system of parameters for $ R $ such that the initial forms of $ ( y ) $ define the directrix of $ \Dir_{x'}( \IE') $.
		
		There exist elements $ ( y^* ) = ( y^*_1, \ldots, y^*_r ) $ in $ \hR $ such that $ ( u, y^* ) $ is a regular system of parameters for $ \hR $, $ ( y^* ) $ yields $ \Dir_{x'}( \IE') $, and 
		\[ 
		\poly{ \IE }{ u }{ y^* } = \cpoly{ \IE }{ u }
		\] 
	\end{Thm}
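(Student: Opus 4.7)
The strategy is to adapt Hironaka's proof of Theorem \ref{Thm:Hironaka} to the setting of pairs by performing \emph{vertex preparation} of the polyhedron $\poly{\IE}{u}{y}$ in the completion $\hR$. Fix a finite set of generators $(f) = (f_1, \ldots, f_m)$ of $J$; by Corollary \ref{Cor:DeltaIEuyIndepGenerators} the polyhedron does not depend on this choice. Each $f_i$ admits an expansion in $\hR$,
\begin{equation*}
f_i \;=\; \sum_{(A,B) \in \IZ^{e+r}_{\geq 0}} C_{A,B,i}\, u^A\, y^B, \qquad C_{A,B,i} \in \hR^\times \cup \{0\},
\end{equation*}
and by hypothesis on $(y)$ the coefficients with $|B|=b$, $A=0$ yield generators of $In_x(\IE)$. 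For a vertex $v$ of $\poly{\IE}{u}{y}$ I introduce the initial form
\begin{equation*}
in_v(\IE, u, y) \;:=\; \sum_{\substack{(A,B,i)\,:\, |B|<b,\\ A=(b-|B|)v}} \overline{C_{A,B,i}}\; U^A\, Y^B \;\in\; \resfield[U,Y],
\end{equation*}
and call $v$ \emph{solvable} when there exist $\lambda_{j,v} \in \hR$, each a scalar multiple of a monomial in $(u)$ of exponent $v$, such that the simultaneous substitution $y_j \mapsto y_j - \lambda_{j,v}$ removes $v$ from the polyhedron. This matches Hironaka's classical condition phrased as an algebraic identity between $in_v(\IE,u,y)$ and the initial forms of $(y)$.

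The proof then alternates two operations. \emph{Normalization} rewrites the $f_i$ so that no coefficient contributes to a point in the strict interior of $\poly{\IE}{u}{y}$; this is purely algebraic and leaves $(y)$ unchanged. \emph{Vertex solving} picks the smallest solvable vertex in some fixed total order on $\IR^e_{\geq 0}$ (for instance the lexicographic order on $(|v|, v_1, \ldots, v_e)$) and performs the substitution $y_j \mapsto y_j - \lambda_{j,v}$. Because each $\lambda_{j,v}$ lies in $\langle u \rangle$, both the {\RSP}-property of $(u,y)$ and the assertion that the initial forms of $(y)$ generate $I\Dir_x(\IE)$ are preserved. As we run through vertices in the chosen order, the $u$-degrees of the corrections $\lambda_{j,v}$ tend to infinity, so the resulting sequence $(y^{(n)})$ is Cauchy in the $\maxIdeal$-adic topology of $\hR$ and converges to a limit $(y^*) = (y_1^*, \ldots, y_r^*)$ with $y_j^* \equiv y_j \pmod{\maxIdeal^2}$.

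It remains to verify the conclusions. The congruence $y_j^* \equiv y_j \pmod{\maxIdeal^2}$ gives that $(u, y^*)$ is a {\RSP} of $\hR$ whose initial forms still determine $\Dir_x(\IE)$. By construction no vertex of $\poly{\IE}{u}{y^*}$ is solvable, and the standard comparison lemma (cf.\ \cite{HiroCharPoly}, (4.8)) adapts to pairs: for any other {\RSP} $(u, y')$ of $\hR$ one has $\poly{\IE}{u}{y^*} \subseteq \poly{\IE}{u}{y'}$, whence intersecting over $(y')$ yields $\poly{\IE}{u}{y^*} = \cpoly{\IE}{u}$. The main obstacle I anticipate is the faithful translation of the solvability criterion and the normalization step to the pair setting: the weight in the denominator is the assigned number $b$, not the actual order of $f_i$ modulo $\langle u \rangle$, so the algebra controlling which vertices can be removed by a coordinate change must be reworked with this bookkeeping throughout. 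Once that adaptation is in place, the termination and convergence arguments follow Hironaka's original reasoning essentially verbatim.
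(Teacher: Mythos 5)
Your strategy---redoing Hironaka's vertex preparation directly in the pair setting---is genuinely different from the paper's route, which instead reduces to the already-known Theorem~\ref{Thm:Hironaka} for the \emph{ideal} $I=\langle g\rangle$ generated by those $(u)$-standard-basis elements with $n_{(u)}(g_\alpha)=b$, obtains $(\hy)$ from that theorem, and then proves minimality of $\poly{\IE}{u}{\hy}$ for the pair by a careful three-step analysis of the substitution $y_j = L_j(z)+H_j(u,z)+Q_j(u)$. Your approach could in principle work, but as written it has a genuine gap at exactly the point you flag as the ``main obstacle'' and then wave away. The crucial step is the final assertion that the vertex-preparation output $(y^*)$ satisfies $\poly{\IE}{u}{y^*}\subseteq\poly{\IE}{u}{y'}$ for \emph{every} admissible $(y')$; you justify this by claiming ``the standard comparison lemma (cf.\ \cite{HiroCharPoly}, (4.8)) adapts to pairs,'' which is precisely the content that needs a new proof and cannot be cited. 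In particular, that is the only place where the hypothesis that $(y)$ gives the \emph{whole} directrix can enter, and it must enter essentially: the paper's Example~\ref{Ex:AssumpThmNecessary} shows the conclusion of the theorem fails when $(y)$ gives only part of the directrix, so any correct argument has to locate where this hypothesis is used, and your sketch never uses it beyond ``it is preserved.''

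Concretely, the $b$-versus-$n_{(u)}(f_i)$ bookkeeping issue you raise is not cosmetic. Generators with $n_{(u)}(f_i)>b$ still contribute points $\frac{A}{b-|B|}$ with $|B|<b$ to $\poly{\IE}{u}{y}$, and these are not in Hironaka's polyhedron $\poly{f}{u}{y}$; their vertices obey a different solvability calculus, and there is no reason for the termination/convergence argument to transfer ``essentially verbatim.'' The paper sidesteps this entirely: it isolates the order-$b$ subideal $I$, applies Hironaka verbatim \emph{to} $I$, and then proves (in step~(3), via the identity $(\ast\ast)$ and the observation that a translation $y_j\mapsto z_j+D_{C,j}u^C$ would create an eliminable vertex $C$ of $\poly{f^{(b)}}{u}{z}$, contradicting minimality of $\cpoly{I}{u}$) that the higher-order generators cannot make $\poly{\IE}{u}{\cdot}$ depend on the choice of prepared coordinates. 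That argument uses the full-directrix hypothesis exactly where you need it---to guarantee there is a monomial with $|B|=b$ that makes $C$ appear as a vertex under the substitution. Your proposal needs an analogous lemma and currently does not supply one.
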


	\begin{proof}
		First, let us explain our variant of Hironaka's procedure to make the polyhedron minimal starting with a given choice for the system $ ( u, y ) $.
		Set
		$$
		\Delta( y ) := \poly{ \IE}uy \subset \IR^e_{ \geq 0}.
		$$
		This polyhedron has finitely many vertices.
		We equip $ \IR^e_{\geq 0 } $ with the total ordering defined by the lexicographical order of $ ( |v| ,  v_1, \ldots, v_e ) $, for $ v = ( v_1, \ldots, v_e ) \in \IR^e_{\geq 0} $.
		We pick the minimal vertex \wrt this total ordering, say $ v^{(1)} $.
		
		If $ v^{(1)} $ is not solvable, we keep it and consider the next smallest vertex \wrt the total ordering.
		
		If $ v^{(1)} $ is solvable, we eliminate it via a change $ y_j \mapsto z_j := y_j - r_j u^A $, as before, $ 1 \leq j \leq r $.
		Set 
		$$ 	
		\Delta ( z )  := \poly{ \IE}uz .
		$$
		By Lemma \ref{Lem:cleaning}, $ \Delta(z) \subset \Delta(y) $ and $ v \notin \Delta(z) $.
		We then repeat the previous for the smallest vertex of $ \Delta(z) $ which is strictly bigger then $ v^{(1)} $.
		
		This procedure is not necessarily finite
		(e.g.~apply it for the ideal $ J = \langle y^{p^2} + y^p - u^{2p} \rangle $ and $ b = p = \car{ \resfield } $).
		But nevertheless we attain an element $ \widehat y = y + \sum_A \lambda_A \, u^A \in \hat R $ for which $  \poly{ \IE}u{\widehat y} \subseteq \Delta(y) $ and no change of the type above can make $  \poly{ \IE}u{\widehat y }$ strictly smaller.
		
		\smallskip
		
		If we start with another choice for $ ( y ) $, say $ ( z ) $, then we can apply the above procedure and obtain $ ( \hz ) $ with $ \poly{\IE}{ u }{ \hz } \subset \poly{\IE}{ u }{ z } $.
		It remains to show
		\begin{equation}
		\label{eq:polystarsame}	
		\poly \IE u{\hy} = \poly \IE u{\hz} ,
		\end{equation}
		which then implies the assertion of the theorem.
		%
		%
		By abuse of notation, we write in the following $ ( y ) $ (resp. $ ( z ) $) instead of $ ( \hy ) $ (resp. $ ( \hz ) $).
		Consider $ h \in J $ and let $ h = \sum_{(A,B)} \coeff_{A,B} \, u^A y^B $ be an expansion as in \eqref{expansion}.
		Then the smallest closed convex subset of $ \IR^e_{\geq 0} $ containing the points of the set
		$$ 
		S (h,b;u,y) := \left\{ \frac{A}{ b - | B | } + \IR^e_{\geq 0 } \;\bigg|\; \coeff_{A,B} \neq 0 \wedge |B| < b \right\}.
		$$
		 coincides with the polyhedron $ \poly {(h,b)}{u}{y} $.
		Moreover, $ \poly \IE uy $ is the smallest closed convex subset of $ \IR^e_{\geq 0} $ containing $ \bigcup_{ h \in J } \poly {(h,b)}{u}{y} $.
		
		For every $ y_j $, $ 1 \leq j \leq r $, we have an expansion which is of the form
		\begin{equation}
		\label{eq:expan_y_by_z_u}
		y_j = L_j ( z ) + H_j ( u, z)  + Q_j ( u ), \hspace{15pt} \mbox{ where }
		\end{equation}
		\begin{itemize}
			\item[$ \diamond $]	
			$ L_j ( z ) = \sum_{i=1}^r \epsilon_{ij} z_i $  with $ \epsilon_{ij} \in R^\times \cup \{ 0 \} $, and 
			$$ 
			\langle \,  L_1 (z) , \ldots, L_r (z) \, \rangle = \langle z_1, \ldots, z_r \rangle \subset R,
			$$ 
			\item[$ \diamond $]	
			$ H_j ( u, z ) \in  \langle u, z \rangle^2 \cap \langle z \rangle $,
			and 
			\item[$ \diamond $]					
			$ Q_j (u) = \sum_A D_{A,j} u^A $, for $ D_{A,j} \in R^\times \cup \{ 0 \} $ and $ |A|\geq 1 $ if $ D_{A,j} \neq 0 $.
		\end{itemize} 
		We split the substitution from $ ( y ) $ to $ ( z ) $ into the following three steps: 
		$$
		y_j 
		\;\;\stackrel{ \displaystyle{( 1 )} }\longmapsto\;\;
		L_j ( z ) 
		\;\;\stackrel{ \displaystyle{( 2 )} }\longmapsto\;\;
		L_j ( z )  +  H_j ( u, z)
		\;\;\stackrel{ \displaystyle{( 3 )} }\longmapsto\;\;
		L_j ( z )  +  H_j ( u, z) + Q_j ( u ), 
		$$
		for $ 1 \leq j \leq r  $.
		We show that the polyhedra after each step coincide with $ \poly \IE uy \subset \IR^e_{\geq 0} $. 
		
		In {\bf step (1)}, a monomial $ u^A y^B $ is mapped to $ u^A \prod_{j=1}^r L_j ( z )^B $.
		By the special form of $ L_j ( z ) $, we obtain the same point $ \frac{A}{ b - |B|} $ after the substitution.
		Although some monomials contributing to a point $ v $ of the polyhedron might vanish under the change from $ ( y ) $ to $ ( z ) $ it can never happen that all of them disappear, i.e., $ v $ is still appearing in the polyhedron \wrt $ ( z ) $.
		
		Next we come to {\bf step (2)}.
		By the first step, we may assume that it is given by 
		$$ 
		y_j = z_j + H_j ( u, z) .
		$$
		Consider an expansion of $ H_j ( u , z ) $ as in \eqref{expansion},
		$ H_j ( u , z ) = \sum_{(C,D)} \lambda_{j,C,D} \,u^C z^D $,
		for certain $ \lambda_{j,C,D} \in R^\times \cup \{ 0 \} $.
		If $ \lambda_{j,C,D} \neq 0 $, we have $ |D| \geq 1 $, and if $ |D| = 1 $ then $ |C| \neq 0 $. 
		(Otherwise the monomial could be shifted into $ L_j ( z ) $). 
		
		Pick $ (C,D) \in \IZ^e_{ \geq 0 } \times \IZ^r_{ \geq 0 } $ with $ \lambda_{j,C,D} \neq 0 $, for some $ j \in \{ 1 ,\ldots, r \} $.
		Let us consider the substitution 
		$$
		y_j \stackrel{ \displaystyle{( 2_{C,D} )} }  = z_j + \lambda_{j,C,D} \,u^C z^D ,
		\hspace{15pt} 1 \leq j \leq r .
		$$
		An easy computation shows
		$$
		\begin{array}{ll}
		y^B 
		&
		\displaystyle
		= \sum_{M_1 = 0}^{B_1} \cdots \sum_{ M_r = 0}^{ B_r } \lambda_{M,C,D}\, ( u^{C} z^D )^{ M_1 + \ldots + M_r }  \, z^{B - (M_1, \ldots , M_r)} =
		\\[8pt]
		&
		\displaystyle
		= \sum_{ M = 0}^{B} \lambda_{M,C,D} \,( u^{C} )^{ |M| } \, z^{B - M + D \cdot |M| }, 
		\end{array}
		\eqno{(\ast)}
		$$
		where we set $ \lambda_{M,C,D} := \prod_{ j = 1 }^r \binom{B_j}{M_j}\lambda_{j,C,D}^{ M_j } $
		Using this for $ C_{A,B} u^A y^B $, we see that all points which might appear are of the form
		\[ 
		v' := \frac{A + C \cdot |M|}{ b - ( \, |B - M| + |D| \cdot |M| \, ) } 
		\]
		For $ M = ( 0, \ldots, 0 ) $ we obtain the same point $ \frac{A}{b - |B|} $ with the same coefficient $ C_{A,B} $. 
		Since $ |D| \geq 1 $, and $ C \neq 0 $ if $ |D| = 1 $, we get, for $ |M| \neq 0 $,
		$$ 
		v' \in  \left( \, \frac{A}{b - |B|} + \IR^e_{\geq 0 } \,\right) \setminus \left\{ \frac{A}{b - |B|} \right\}.
		$$	
		Therefore vertices are not touched and the polyhedron does not change under $ ( 2_{C,D} ) $.
		We apply this for each $ (C,D) $ with non-zero coefficients and get that the polyhedron does not change in step (2).
		
		Finally, we have to deal with {\bf step (3)}:
		By the first two steps we may assume that the substitution is given by
		$$ 
		y_j = z_j + Q_j ( u ), \hspace{15pt} 1 \leq j \leq r,
		$$
		for some $ Q_j (u) = \sum_{ A \in \IZ^e_{ \geq 0 } } \,D_{A,j} u^A $, for $ D_{A,j} \in R^\times \cup \{ 0 \} $ and $ |A|\geq 1 $.
		By the construction of $ ( y ) = ( \widehat{y}) $ and $ ( z ) = ( \widehat{z} ) $ at the beginning of the proof, the corresponding polyhedra must coincide, $ \poly \IE u{\hy} = \poly \IE u{\hz} $.
		\red{Suppose this is not the case. 
			Then there exists $ v \in \IR_{\geq 0}^e $ such that 
			either $ v \in \poly \IE u{\hy} $ and $ v \notin \poly \IE u{\hz} $
			or $ v \notin \poly \IE u{\hy} $ and $ v \in \poly \IE u{\hz} $.
			By the construction of $ ( \hy  ) $ (resp.~$ (\hz) $) no change of the form 
			$ y_j \mapsto  y_j + D_{A,j} u^A $ (resp.~$ z_j \mapsto  z_j - D_{A,j} u^A $) can make $ \poly \IE u {\hy} $ (resp.~$ \poly \IE u {\hz} $) strictly smaller.
			Therefore, both cases are impossible and we must have equality.}
		%
		%
		This completes the proof of Theorem \ref{Thm:BerndHiro}.
	\end{proof}

	\smallskip
	
	Note that for the definition of $ \cpoly Ju $ the points of the form $ \frac{A}{n_i - |B|} $, $ n_ i = n_{(u)} ( f_i ) $, are considered.
	On the other hand, $ \cpoly \IE u $ is defined by those of the form $ \frac{A}{b  - |B|} $ and in general $ b \leq n_i $.
	Therefore the two polyhedra do not necessarily coincide:

	\begin{Ex}
		Let $ K $ be a field of characteristic $ p = 3 $ and let $ b = 2 $.
		Let 
		$$ 
		f_1 = z_1^2 + u_1^3 
		\hspace{10pt}
		\mbox{ and }
		\hspace{10pt}
		f_2 = z_2^3 + z_2^2 u_2^2  + u_2^9 
		$$ 
		and $ J = \langle f_1, f_2 \rangle \subset K [ u, z ]_{\langle u, z \rangle} $.
		The set of vertices of the polyhedron $ \poly fuz $ is 
		$
		\left\{  v := \left( \frac{ 3}{2}, 0 \right),  w := (0, 2) \right\}.
		$
		One can show that we have $ \poly fuz = \cpoly Ju $.
		
		On the other hand, let $ \IE := ( \langle f_1, f_2 \rangle , b = 2 ) $.
		The set of vertices for $ \poly \IE uz  $ is
		$
		\left\{  v = \left( \frac{ 3}{2}, 0 \right),  \widetilde{v} :=  \left(0, \frac{9}{2} \right) \right\}.
		$
		The vertex $ \widetilde{v} $ can be eliminated by changing the coordinates to $ (x_1, x_2) := (z_1, z_2 + u_2^3 ) $.  
		Thus $  \poly \IE uz  \neq \cpoly \IE u $. 
		Since the order of $ f_2 $ at the origin is three and thus bigger than $ b = 2 $, the directrix of  $ \IE = ( \langle f_1, f_2 \rangle, 2 ) $ is only given by $ Z_1 $! 	 
		If we set $ y_1 : = z_1 $ and $ u_3 := z_2 $, then $ f_2 \in \langle u_1, u_2, u_3 \rangle $, which means that Hironaka's characteristic polyhedron $ \cpoly J{u_1,u_2,u_3} $ is not defined.
		
		\emph{Therefore there is an essential difference between the polyhedron of the ideal $ J $ and the polyhedron of the pair $ \IE = ( J, b) $.}
	\end{Ex}
	
	\smallskip
	
	\noindent {\bf Characteristic polyhedra of idealistic exponents.} ---
	As we have seen in Example \ref{Ex:PolyNotUnique}, the characteristic polyhedron $ \cpoly{ \IE }{ u } $ may not behave well under the equivalence relation $ \sim $.
	Indeed, the polyhedra in the mentioned example are already minimal \wrt the choice of $ ( y ) $ (cf.~Proposition \ref{Prop:maxSamePoly}) and although the pairs in this example are equivalent, the polyhedra do not coincide.
	
	In Theorem \ref{Thm:deltaINV} below we proof that 
	$$ 
	\delta_x (  \IE ; u  ) := \inf\{ |v| \mid v \in \cpoly \IE u \}  
	\ \ \ 
	\mbox{(cf.~Definition \ref{def:delta(Delta)})}	
	$$
	coincides for equivalent pairs.
	Therefore this is an invariant of the idealistic exponent $ \IE_\sim $.
	In \cite{BerndBM} this result is used to deduce that the invariant of Bierstone and Milman for their constructive resolution of singularities in characteristic zero can be obtained solely by considering polyhedra.
	This means from this point of view it is not necessary to have a unique polyhedron of an idealistic exponent.
	
	One way to obtain a unique characteristic polyhedron for an idealistic exponent would be to characterize a canonical representative.
	Since the changes of the polyhedra occur when we apply differential operator a candidate for the canonical representative could be by applying all differential operators $ \Diff^{< b}_\IZ ( R ) $ on $ \IE = (J,b) $ and then reducing via $ (I^a, ac) \sim ( I, c) $, $ a \in \IZ_+ $, as much as possible.
	In fact, we show in Lemma \ref{Lem:PolyEquiv} that the reduction in the last step does not change our polyhedron.
	These considerations already appear in Hironaka's work \cite{HiroThreeKey}, where he uses the notion of \emph{Diff-full} pairs (\cite[Definition 11.2]{HiroThreeKey}) and shows how to obtain such a situation (\cite[Lemma 11.2]{HiroThreeKey}).
	The previous idea also appears in \cite[Theorem 3.11]{AnaMariOrlando}, where a canonical representative for a Rees algebra given over a perfect field is determined.

	\smallskip
	
	\begin{Rk}[Quasi-homogeneous characteristic polyhedra]
		\label{Rk:qh_setting}
		Let $ R $ be a regular local ring and $ ( u, y ) = ( u_1, \ldots, u_e; y_1, \ldots, y_r ) $ be a regular system of parameters for $ R $.
		So far we never used weights on the elements of $ ( u, y ) $, respectively, to be more precise, we assigned to each of them the weight $ 1 $.
		For an element $ g = \sum C_{A,B} u^A y^B \in R $ and a positive rational number $ b \in \IQ_+ $ the polyhedron $ \poly{ (g,b) }uy $ was then defined via the points $ \frac{ A }{ b - |B| } $ with $ C_{A,B} \neq 0 $ and $ |B| < b $ (Definition \ref{Def:NewtonPolyandPolyNonIntrinsic}).
		But, in principle, we are not forced to consider only this situation -- and in order to obtain refined information on the singularity it might also be useful to change the view in certain directions.
		The following generalization can also be done for Hironaka's characteristic polyhedron, see \cite[section 2]{PCQO}, where it is used to prove a characterization theorem for quasi-ordinary hypersurface singularities.
		
		Let $ \nu : R \to \IQ \cup \{ \infty \} $ be a monomial valuation on $ R $ defined by 
		$$
		\nu ( u_i ) = \alpha_i , \hspace{10pt}
		\nu ( y_j ) = \beta_j, \hspace{10pt}
		\nu ( \lambda ) = 0, \hspace{10pt}
		\mbox{ and }\hspace{10pt}
		\nu ( 0 ) = \infty,
		$$
		where $ \lambda \in R^\times $ is a unit in $ R$ and $ \alpha_i, \beta_j \in \IQ_{\geq 0 } $ are non-negative rational numbers, $ 1 \leq i \leq e $ and $ 1 \leq j \leq r $.
		(In fact, one might even consider monomial valuations of rank strictly bigger than $ 1 $, e.g., with values in $ \IQ^d_{\geq 0 } $, for some $ d \geq 2 $, see \cite{PCQO}).
		The example which we are having in mind and which will appear in \cite{BerndBM} is, when $ ( y) $ determines the directrix of a pair $ \IE $, $ \poly \IE uy = \cpoly \IE u $ is minimal, $ \alpha_i = \frac{1}{\delta} < 1  $, for all $ i $, where $ \delta := \delta (  \IE ; u ) $  (Definition \ref{def:delta(Delta)}), and $ \beta_j = 1 $, for all $ j $. 
		
		For $ g \in R $ and $ b \in \IQ_+ $ as above set $ \IE : = ( g,b) $.
		Then we define the \textit{associated $ \nu $-polyhedron} $ \Delta^\nu ( \IE ; u; y ) $ as the smallest closed convex subset of $ \IR^e_{\geq 0 } $ containing all elements of
		$$
		\left\{ \;
		\frac{\atA}{ b - |\btB| } + \IR^e_{\geq 0 } \mid C_{A,B} \neq 0 \wedge | \btB | < b 
		\; \right\},
		$$ 
		where we use the abbreviations $ \atA := ( \alpha_1 A_1, \ldots, \alpha_e A_e ) $ and $ \btB := ( \beta_1 B_1, \ldots, \beta_r B_r ) $.
		One possibility to define the \emph{characteristic $ \nu $-polyhedron} is by 
		$$ 
		\Delta^\nu ( \IE ; u ) := \bigcap_{ ( \hy ) } \Delta^\nu ( \IE ; u; \hy ),
		$$ 
		where the intersection runs over all systems $ ( \hy ) $ extending $ ( u ) $ to a regular system of parameters for $ \hR $ and which fulfill the additional condition $ \nu ( \hy_j ) = \beta_j $, $ 1 \leq j \leq r $.
		We set $ \delta^\nu := \delta ( \Delta^\nu ( \IE ; u ) ) $.
		
		All the notions and results of before can then be developed and proven in this setting.
		We only remark that in the $ \nu $-variant of Theorem \ref{Thm:BerndHiro} the assumption on the system $ ( y ) $ becomes: $ ( y ) $ determines the $ \nu $-directrix which is the directrix of the $ \nu $-initial forms on the weighted graded ring, where the weight is induced by $ \nu $.
		Furthermore, Theorem \ref{Thm:deltaINV} and Lemma \ref{Lem:DeltaEins} below are also true in the quasi-homogeneous situation.
	\end{Rk}

	%
	%
	%
	%
	%
	%
	%
	%
	%
	%
	%
	%
	%

	\section{First properties, \textcolor{black}{maximal contact,} and invariants of the polyhedron}

	In this section, we discuss some properties of the characteristic polyhedron and deduce crucial information on the singularity from it. 
	
	\smallskip 
	
	We have seen that the polyhedron may change under the equivalence relation $ \sim $, Example \ref{Ex:PolyNotUnique}.
	But we can also say when the polyhedron is stable.
	
	\begin{Lem}
		\label{Lem:PolyEquiv}
		Let $ \IE = (J,b) $ and $ \IE_i = (J_i,b_i) $, $ i \in \{1, 2 \}$, be pairs on $ Z $ and $ x \in \Sing ( \IE ) $ resp.~$ x \in \Sing ( \IE_1 \cap \IE_2 ) $.
		As usual $ (R, \maxIdeal , \resfield ) $ denotes the regular local ring of $ Z $ at $ x $ and $ (u,y) \color{black} = (u_1, \ldots, u_e, y_1, \ldots, y_r )$ is a regular system of parameters for $ R $.
		We abbreviate the notation by setting $ \Delta (J,b) :=  \poly{(J_x,b)}uy $.
		\begin{enumerate}
			\item 
			If $ a \in \IZ_+ $, then $ \Delta (J,b) = \Delta (J^a, ab) $. 
			
			\item 
			Suppose $ b_1, b_2 \in \IZ_+ $ and let $ c \in \IZ_+ $ with $ b_1 \mid c $ and $ b_2 \mid c $.
			Then  
			$$
			\Delta ( (J_1,b_1) \cap (J_2,b_2) ) = \Delta\left ( J_1^{\frac{c}{b_1}} + J_2^{\frac{c}{b_2}}, c \right).
			$$
			
			\item 
			Suppose $ R $ is of the form $ R = S [w_1,\ldots, w_d]_{\langle v, w \rangle } $, for a regular local ring $ S $ with regular system of parameters $ ( v ) = ( v_1, \ldots, v_c ) $.
			For $ Q \in \IZ^d_{ \geq 0 } $, let $ \cD_{Q,\textcolor{black}{w}} \in \Diff^{\leq q}_{S} (  R ) $, $ q := |Q| $, be the differential operator defined by 
			\[  
			\cD_{Q,\textcolor{black}{w}} ( \coeff\, w^D)  = \binom{D}{Q} \coeff \, w^{D-Q} 
			\]
			for $ \coeff \in S $.
			We set $ \cD_{Q,\textcolor{black}{w}}^{log} := w^Q \cD_{Q,\textcolor{black}{w}}  \in \Diff^{\leq q}_{S} ( R ) $.
			Then
			\[ 
			\Delta \big( \, (J, b) \cap ( \cD_{Q,\textcolor{black}{w}}^{log} \textcolor{black}{(}J \textcolor{black}{)}, b - q)\,\big) =  \Delta ( J, b).
			\]
			
			\item 
		 	\color{black} 
		 	Suppose $ R $ is of the form $ R = S [y_1,\ldots, y_r]_{\langle u, y \rangle } $, for a regular local ring $ S $ with regular system of parameters $ ( u ) = ( u_1, \ldots, u_e ) $.
			Let $ \cD_{Q,y} \in \Diff^{\leq q}_{S} (  R ) $ be as in (3),
			for $ Q \in \IZ^r_{ \geq 0 } $ and $ q := |Q| $.
			We have
			\begin{equation}
			\label{eq:poly_D_y_same}   
			\Delta \big( \, (J, b) \cap ( \cD_{Q,y} (J), b - q)\,\big) =  \Delta ( J, b).
			\end{equation} 
		\end{enumerate}
	\end{Lem}
	
	\begin{proof}
		The proofs emerge from a study of the vertices' behavior under the equivalences $ ( J, b ) \sim ( J^a, ab) $, $ ( J_1, b_1 ) \cap ( J_2, b_2 ) \sim ( J_1^{\frac{c}{b_1}} + J_2^{\frac{c}{b_2}}, c ) $, $ (J, b) \cap ( \cD_{Q, \textcolor{black}{w}}^{log} J, b - q) \sim ( J, b ) $.
		For a detailed proof of \textcolor{black}{(1)--(3)}, see \cite[Lemma 2.4.4 and Lemma 2.4.5]{BerndThesis}.
		
		\color{black} Let us discuss (4).
		As $ \poly{\IE_1 \cap \IE_2}{u}{y} $ is the smallest closed convex set containing $ \poly{\IE_1}uy $ and  $ \poly{\IE_2}uy $,
		it suffices to show 
		$    \Delta \big( \, ( \cD_{Q,y} (J), b - q)\,\big) 
		\subseteq  \Delta ( J, b) $.
		We prove this inclusion for any $ f \in J $,
		i.e., we show
		\begin{equation}
		\label{eq:poly_D_y_same_f}   
		\Delta \big( \, ( \cD_{Q,y} (f), b - q); u ;  y \,\big) \subseteq  \Delta ( (f, b);u;y).
		\end{equation} 
		By \cite[Remark~1.9]{BerndPartial}, we can write $ f $ as
		$ f = \sum_{\substack{B \in \IZ^e_{ \geq 0 }\\|B|<b}} f_B (u)\, y^B + h, $
		for some $ h \in \langle y \rangle^b $ and coefficients $ f_B(u) \in S $.
		We have $ \cD_{Q,y} ( h  ) \in \langle y \rangle^{b-q} $.
		Hence, this element cannot contribute to $ \poly{( \cD_{Q,y} (f), b - q)}uy $.
		
		Let $ u^A y^B $ be a monomial appearing in $ f $ with non-zero coefficient in $ S^\times $.
		Suppose that $ |B|< b $.
		Hence, the monomial corresponds to $ v := \frac{A}{b-|B|} \in  \poly{(f,b)}uy $. 
		For $ \binom{B}{Q} u^A y^{B- Q} $, we obtain in $ \poly{( \cD_{Q,y} (f), b - q)}uy $ the same point:
 		\[ 
			\frac{A}{b- q - |B-Q|} 
			= \frac{A}{b- q - |B|+|Q|} = \frac{A}{b-|B|} = v 
		\] 
		This provides \eqref{eq:poly_D_y_same_f}, which implies \eqref{eq:poly_D_y_same}. 
	\end{proof}
	
	\color{black}
	
	The following examples shows that, in general, 
	part (4) does not hold if the differential operator involves the variables $ (u) $.
	
	\begin{Ex}
		\label{Ex:poly_derivative_bad}
		Let $ k $ be a field of characteristic different from $ 2 $ and $ 3 $. 
		Consider the pair $ ( f, 3 ) $ in $ k [u_1, u_2, y]_{\langle u_1, u_2, y \rangle} $,where
		\[
			f = y^3 + u_1^2 ( y + u_2^\alpha ) + u_2^\beta, 
		\]
		for some $ \alpha, \beta \in \IZ_{\geq 4} $ with $ 3 \alpha < \beta $. 
		The polyhedron $ \poly{(f,3)}uy $ has vertices 
		$ ( 1, 0) $, 
		$ \frac{(2, \alpha)}{3} $,
		$ \frac{(0, \beta)}{3} $.
		If we apply the derivative $ \frac{\partial}{\partial u_1} $ twice, we get
		\[
			(f,3) \sim (f,3) \cap (y + u_2^\alpha , 1).
		\] 
		The second pair on the right hand side provides the new vertex 
		$ (0,\alpha) $ in the polyhedron $ \poly{(f,3) \cap (y + u_2^\alpha , 1)}uy $. 
		So, we have
		\[ 
		\Delta \big( \, (f, 3) \cap ( \tfrac{\partial^2}{\partial u_1^2} (f), 1 ); u ;  y\,\big) \supsetneq   \Delta ( (f, 3); u ;  y).
		\]
	\end{Ex}

	\color{black}

	\smallskip
	
	Let us now introduce the concept of maximal contact, which is an important tool in the proof of resolution of singularities in characteristic zero.
	It allows to reduce the local resolution problem to one in smaller dimension so that one can apply an induction.
	Classical references are \cite{GiraudMaxZero,AHV}.
	
	\begin{Def}
		Let $ \IE = (J,b) $ be a pair on $ Z $ and $ x \in \Sing ( \IE ) $.
		Let $ (z) = (z_1, \ldots, z_s) $ be a system of elements in the local ring $ ( R = \cO_{Z,x}, \maxIdeal) $ which can be extended to a regular system of parameters for $ R $.
		We say $ W := V(z) $ has {\em maximal contact with $ \IE $ at $ x $} if the following equivalence holds
		\[ 
		\IE_x = ( J_x, b ) \sim (z, 1) \cap (J_x,b).
		\]
	\end{Def}
	
	In particular, the images of $ (z) $ in $ \maxIdeal / \maxIdeal^2 $ are part of a minimal generating system for the ideal of the directrix $ \Dir_x (\IE) $.
	By the Numerical Exponent Theorem \ref{Prop:NumExp}, this can happen only if $ \ord_x ( \IE) = 1 $. 
	We discuss later (Lemma \ref{Lem:ExistenceMaxContact}) assumptions under which a maximal contact subvariety exists. 
	
	\smallskip
	
	Now, we prove that the projected polyhedron $ \poly \IE uy $ is independent of the choice of $ ( y ) $ if $ V(y) $  has maximal contact with $ \IE$ (at $ x $) 
	\textcolor{black}{and if the elements $ ( y ) $ are obtained by applying differential operators in the variables $ ( y) $}.
	In particular, it coincides with the characteristic polyhedron if $ (y) $ define the directrix of $ \IE' $ at $ x' $ (cf.~Theorem \ref{Thm:BerndHiro}).
	Hence we obtain the analogue of Cossart's result \cite{CosPoly} (which is in the complex analytic case) in our setting (cf.~\cite[Corollaire, p.~18]{CosPoly}). 
	
	\begin{Prop}
		\label{Prop:maxSamePoly}
		Let $ \IE = (J,b) $ be a pair on $ ( R, \maxIdeal ) $.
		Fix a system of elements  $ ( u ) = ( u_1,, \ldots, u_{ d }) $ which can be extended to a regular system of parameters for $ R $.
		Let $ (y) = ( y_1, \ldots, y_s ) \subset R $ 
		\color{black} 
		be elements such that
		\begin{enumerate}
			\item[$(i)$] 
			$ ( u,y ) $ is a regular system of parameters for $ R $, 
			
			\item[$(ii)$]
			 $ R \cong S[y]_{\langle u ,y \rangle} $,
			 where $ S $ is a regular local ring with regular system of parameters $ ( u ) $,
			 and
			 
			 \item[$(iii)$] 
			 that, for every $ j \in \{ 1, \ldots, d \} $,
			 there exists a differential operator $ \cD_{Q_j,y} \in  \Diff^{\leq b-1}_{S} (  R ) $
			 and an element $ f_j \in J $ such that
			 $
			 \cD_{Q_j,y} (f_j) = y_j. 
			 $
		\end{enumerate}
	
	\color{black} 	
		
		Then 
		\textcolor{black}{$ V ( y ) $ has maximal contact with $ \IE $ at the origin and}
		the polyhedron 
		$ 
		\poly \IE uy
		$
		is independent of the choice of $ ( y ) $ with these properties.
		This means if $ (z) \subset R $ is 
		\textcolor{black}{a system of elements such that analogue of (i), (ii), (iii) holds,} 
		then 
		\[   
			\poly \IE uy  =  \poly \IE uz .
		\] 
		Furthermore, we have that $ \poly \IE uy = \cpoly \IE u $ if the images of $ ( y ) $ in $ R' = R/\langle u \rangle $ define the directrix of $ \IE' = ( JR', b ) $.
	\end{Prop}
	
	\begin{proof}
		\color{black} 
		The Diff Theorem (Proposition~\ref{Prop:Diff}) and $ \cD_{Q_j,y} \in  \Diff^{\leq b-1}_{S} (  R ) $ imply 
		\[
			(J,b) 
			\sim
			(J,b) \, \cap \, \bigcap_{j=1}^s ( \cD_{Q_j,y}(f_j),1)
			=
			(J,b) \, \cap \, (y,1).
		\]
		Hence, $  V(y) $ has maximal contact.
		
		Let us consider $ ( z ) = (z_1, \ldots, z_s) $ as in the statement. 
		Since $ (u, z ) $ is a regular system of parameters for $ R $, we can express $ (y) $ by these elements.
		As in the proof of Theorem \ref{Thm:BerndHiro}  (cf.~(\ref{eq:expan_y_by_z_u})), we have
		$
		y_j =  L_j ( z ) + Q_j' ( u ) + H_j ( u ,z ) ,
		$
		where $ L_j ( z ) = \sum_{i=1}^r \epsilon_{ij} z_i $  with $ \epsilon_{ij} \in R^\times \cup \{ 0 \} $, and $\langle \,  L_1 (z) , \ldots, L_r (z) \, \rangle = \langle z_1, \ldots, z_r \rangle \subset R $,
		$ H_j ( u, z ) \in  \langle u, z \rangle^2 \cap \langle z \rangle $,
		and 
		$ Q_j' (u) = \sum D_{A,j} u^A $, for $ D_{A,j} \in R^\times \cup \{ 0 \} $ and $ |A|\geq 1 $.
		As we already have seen in the proof of Theorem \ref{Thm:BerndHiro} (steps (1) and (2)), we do not change the polyhedron if we replace $ y_j $ by $  L_j ( z )  + H_j ( u ,z ) $.
		
		Let us assume that $ y_j = z_j + Q_j' ( u ) $.
		Hence, the differential operators $ \cD_{Q_j,z} $ can be replaced by $ \cD_{Q_j,y} $. 
		By Lemma~\ref{Lem:PolyEquiv}(4) and assumption $ (iii) $ for $ (z) $, we have
		\[
			\poly{(J,b)}uy 
			= 
			\Delta \Big( 
			(J,b) \cap \bigcap_{j=1}^s(y_j - Q_j'(u),1)
			; u ; y \Big) . 
		\]
		In particular, $ A \in \poly{(J,b)}uy $ for each $ A \in \IZ_{\geq 0}^d $ with $  D_{A,j} \neq 0 $. 
		By the analogous argument, we have $ A \in  \poly{(J,b)}uz $ for every $ A \in \IZ_{\geq 0}^d $ such that $  D_{A,j} \neq 0 $.
		
		Let $ g \in J $ and let $ u^\alpha y^\beta $ be a monomial appearing in an expansion of $ g $. 
		Consider the change $ y_j \mapsto  y_j + D_{A,j} u^A $, for $ j \in \{ 1, \ldots, s \} $. 
		As in the proof of Theorem~\ref{Thm:BerndHiro} (Step 2),
		the points possibly created by the change in the polyhedron are of the form
		\[
			v' := \frac{\alpha + A \cdot |M|}{ b - ( \, |\beta - M|  \, ) } ,
		\] 
		where $ M = (M_1, \ldots, M_s) $ and $ M_j \in \{ 0, \ldots, \beta_j \} $ for $ j \in \{ 1, \ldots, s \} $. 
		
		If $ | \beta | < b $, then $ u^\alpha y^\beta $ provides a point $ v \in \poly{(J,b)}uy $
		which remains (case $ M = (0, \ldots, 0 ) $).
		For $ |M|> 0 $, we have
		\[
			v' = \frac{b-|\beta|}{b - |\beta|+| M|} \cdot v 
			+ \frac{|M|}{b - |\beta|+ | M|} \cdot  A.
		\]
		Hence, $ v' $ is a point lying on the line segment connecting $ v $ and $ A $, but is different from $ v $ and $ A $ since $ |M|>0 $. 
		Therefore, $ v' $ is contained in both polyhedra,
		$ \poly{(J,b)}uy $ and $ \poly{(J,b)}uz  $. 
		
		On the other hand, if $ |\beta| \geq b $, 
		then we have $ b - | \beta | + |M| \geq |M| $ and thus
		\[
			\frac{|M|}{b - |\beta|+ | M|} \cdot  A  \in A + \IR_{\geq 0}^d.
		\]
		This implies that $ v' \in A + \IR_{\geq 0}^d $,
		i.e., $ v' \in  \poly{(J,b)}uy $ and $ v' \in \poly{(J,b)}uz $. 
		
		Applying this argument for all terms of $ Q_j' (u) $,
		we obtain the desired equality $ \poly \IE uy  =  \poly \IE uz  $. 
		
		\color{black}

		Suppose $ ( y ) $ fulfills the \textcolor{black}{directrix} condition of the last part.
		Then Lemma \ref{lem:cleaning2} holds.
		\textcolor{black}{If a change $ z_j := y_j + \lambda_j u^v $ eliminates a vertex $ v \in \poly{\IE}uy $, for some $ \lambda_j \in R^\times \cup \{ 0 \} $,
			then $ (z) = (z_1, \ldots, z_s) $ fulfills the properties $ (i), (ii), (iii) $ above.
		This implies $ \poly \IE uy  =  \poly \IE uz  $ in contradiction to the assumption that the change eliminate $ v $.
		Hence, the last} statement follows.
	\end{proof}

	While maximal contact is one of the key ingredients for resolution of singularities over fields of characteristic zero, it does not always exist in general \cite{Nara, GiraudMaxPos, CosIsThere}. 
	Further, the polyhedron $ \Delta (\IE, u) $ may change under $ \sim $.
	Nonetheless, we can extract invariants of the idealistic exponent $ \IE_\sim $ from the characteristic polyhedron of a representative.

	\begin{Def}
		\label{def:delta(Delta)}
		Let  $ \Delta \subset \IR^d_{ \geq 0 } $ be any subset.
		We define 
		$$ 
		\delta ( \Delta ) := \inf \{ \, |v| = v_1 + \ldots + v_d \mid v = (v_1, \ldots, v_d) \in \Delta  \, \}.
		$$
		If $ \Delta = \Delta_x(\IE; u;y) $, then we set $ \delta_x (\IE; u;y) := \delta (\Delta_x( \IE; u;y) ) \in \frac{1}{b!} \IZ_{+}  \cup \{ \infty \}$.
	\end{Def}
	
	Note the following: 
	We have $ \delta_x (\IE;u;y) = \infty $ if and only if $ f \in \langle y \rangle^b $ for every $ f \in J_x $, $ \IE= (J,b) $,
	(Definition \ref{Def:NewtonPolyandPolyNonIntrinsic}(2)). 
	Furthermore, the latter is equivalent to $ V(y) $ being a permissible center for $ \IE$ (Definition \ref{Def:perm}).
	
	\smallskip
	
	\begin{Prop}
		\label{Prop:deltaEuyInv}
		Let $ \IE_1 \sim \IE_2 $ be two equivalent pairs on $ Z $ and $ x \in \Sing (\IE_1 ) $.	
		Let $ (u,y) = (u_1, \ldots, u_d; y_1, \ldots, y_s) $ be a regular system of parameters for $ \cO_{Z,x} $.
		\begin{enumerate}
			\item Then we have:
			$$ 
			\delta_x ( \IE_1; u; y ) = \delta_x ( \IE_2; u; y ) .
			$$
			\item Let $ ( u, z ) $ be another choice for the regular system of parameters and suppose $ (z,1) \cap \IE_1 \sim (y,1) \cap \IE_1 $.
			Then
			$$ 
			\delta_x ( \IE_1; u; y ) = \delta_x ( \IE_1; u; z ) .
			$$
			\red{In particular, if there are maximal contact
			coordinates, this number is independent of the choice of the maximal contact coordinates.}
		\end{enumerate}
	\end{Prop}

	Note that the condition $ (z,1) \cap \IE_1 \sim (y,1) \cap \IE_1 $ does not necessarily imply that $ V(y) $ has maximal contact with $ \IE_1 $ at $ x $ (i.e., $ \IE_1 \sim (y,1) \cap \IE_1 $).

	\begin{Rk}
		The proof of Proposition~\ref{Prop:deltaEuyInv} is  similar to the one for the Numerical Exponent Theorem, Proposition \ref{Prop:NumExp}.
		The idea is to construct a {\LSB} contradicting the equivalence $ \IE_1 \sim \IE_2 $ 
		if the desired statement would be false.
		An analogous result for the characteristic polyhedron of ideals is shown in \cite[Theorem 4.18]{HomeworkDim2}.
		More generally, \cite{CosRevista} (over $ \IC $) and \cite{CJSc} (arbitrary case) discuss results concerning the question which data obtained from the characteristic polyhedron of an ideal $ J $ is an invariant of the singularity $ \Spec(R/J) $ (and additional data connected to exceptional divisors arising along a resolution process). 
	\end{Rk}

	\begin{proof}
		Set $ \IE = ( J, b) := \IE_i := (J_i, b_i ) $, for some $ i \in \{ 1, 2\} $.
		We abbreviate 
		\[ 
		 	\delta := \delta_x ( \IE; u; y ) .
		\] 
		Since $ x \in \Sing(\IE) $, we have that 
		$
			\delta \geq 1 
		$:
		Suppose that $ \delta < 1 $, then there would be an element $ h \in J $ such that a monomial $ u^A y^B $ with 
		$ \frac{|A|}{b - |B|} = \delta < 1 $ appears with non-zero coefficient in an expansion of $ h $.
		But this would imply $ |A|+|B| < b $, i.e., $ \ord_x(h) < b $, which contradicts the hypothesis $ x \in \Sing (\IE) $.
		
		\smallskip 
		
		Next, we construct a permissible {\LSB}, whose length is closely connected to $ \delta $:
		Let $ f \in J $.
		From an expansion \eqref{expansion}, we can deduce that $ f $ can be written as a finite sum
		\[ 		
		f = f_b(y) + \sum_{(A,B): |B|<b} C_{A,B} u^A y^B + h_+ ,
		\] 
		with $ f_b(y) = \sum_{B: |B|=b} C_{0,B} y^B \in \langle y \rangle^b $, units $ C_{0,B}, C_{A,B} \in R^\times \cup \{ 0 \} $, and $ h_+ \in \langle y \rangle^{b+1} $.
		We introduce a new independent variable $ t $ and work in $ S := R[t] $.
		We blow up with center $ V ( t, u,y ) $ which is permissible for $ \IE $ as $ x \in \Sing(\IE) $.
		We consider the origin of the $ T $-chart, i.e., the point with coordinates 
		$ (t', u',y') = (t, \frac{u}{t}, \frac{y}{t})$.
		Then $ f $ becomes
		$$
		f_b(y') + \sum_{(A,B): |B|<b} C_{A,B} \, t'^{|A|+|B|-b} u'^A y'^B + \widetilde  h_+ ,
		\, \, \, \mbox{ for } \widetilde h_+ \in \langle y' \rangle^{b+1}.
		$$
		Note that the exponent of $ t' $ in the sum can be written as
		$$ 
		|A|+|B|-b = \left(\frac{|A|}{b- |B|} - 1 \right)(b - |B|) \geq (\delta - 1) (b - |B|) .
		$$
		Furthermore, we may choose $ f \in J $ such that there exist $ ( A, B) $ with $ C_{A,B} \neq 0 $ and
		$|A|+|B|-b = (\delta - 1) (b - |B|) $.
		
		We repeat the last step $ N $-times (i.e., next we blow up $ V(t',u',y' ) $ and consider the origin of the $ T' $-chart), for some $ N \in \IZ_+ $, $ N \gg  0 $.
		Let $ ( \widetilde t, \widetilde u, \widetilde y) := ( t, \frac{u}{t^N}, \frac{y}{t^N}) $ be the resulting local coordinates and let $ \widetilde \IE $ be the transform of $ \IE $. 
		In the transform of $ f \in J $, the $ t $-power for a monomial resulting from $ u^A y^B $ with $ |B|<b $ is 
		\[
		N(|A|+|B|-b) \geq N (\delta - 1) (b - |B|)
		\]
		and there exist $ f \in J $, where equality holds. 
		Therefore, we have:
		\[
			\forall \, v = (v_t, v_u) \in \poly{\widetilde{\IE}}{\widetilde t, \widetilde u}{\widetilde{y}} 
			\, : \, 
			v_t \geq N( \delta - 1)  ,
		\]
		\[
			\exists \, v = (v_t, v_u) \in \poly{\widetilde{\IE}}{\widetilde t, \widetilde u}{\widetilde{y}} 
			\, : \, 
			v_t = N( \delta - 1)  .
		\]
		Thus, the polyhedron $  \poly{\widetilde{\IE}}{\widetilde t, \widetilde u}{\widetilde{y}} \subset \IR^{1+d} $ can be pictured as:
		\[
		\begin{tikzpicture}[scale=0.5]
		
			\foreach \x in {0,...,8}
			\draw (\x,0.1) -- (\x,-0.1);
			
			\foreach \x in {0,...,6}
			\draw (0.1,\x) -- (-0.1,\x);
	
			\draw[<->, thick] (0,6.7)--(0,0)--(8.7,0);
			\node at (9.2,0) {$ v_t $};
			\node at (-0.5, 6.7) {$ v_u $};

			\fill[lightgray] (2.5, 6.5)--(2.5,4.5)--(4,2)--(6.5,1)--(8.5,1) -- (8.5,6.5);
			\draw[thick,dashed,gruen] (2.5,4.5)--(2.5,-0.25);
			\draw[very thick,cyan] (2.5, 6.5 )--(2.5, 4.5)--(4,2)--(6.5,1)--(8.5,1);
			
			\node at (2.5,-0.6) {\textcolor{gruen}{\small $ N(\delta - 1) $}};
			\node at (10.5,3.5) {$ \poly{\widetilde{\IE}}{\widetilde t, \widetilde u}{\widetilde{y}} $};

		\end{tikzpicture} 
		\]	
		If $ \delta > 1 $ and $ N $ is large enough, we have $ N(\delta - 1) \geq 1 $, i.e., $ V (\widetilde t, \widetilde y) $ is permissible for $ \widetilde \IE $. 
		If we consider the origin of the $ \widetilde T $-chart, we have coordinate 
		$ (\widetilde t', \widetilde u', \widetilde y') := (\widetilde t, \widetilde u, \frac{\widetilde y}{\widetilde t}) $.
		Let $ \widetilde E ' $ be the transform of $ \widetilde E $. 
		We observe that $ (v_t, v_u) \in \poly{\widetilde{\IE}}{\widetilde t, \widetilde u}{\widetilde{y}} $
		is mapped to $ (v_t - 1, v_u)\in \poly{\widetilde{\IE}'}{\widetilde t', \widetilde u'}{\widetilde{y}'} $.
		We may repeat the last step $ P := \lfloor N (\delta - 1) \rfloor $ times and obtain a {\LSB}, say $ \mathcal{S}(\delta, N) $, that is permissible for $ \IE $. 
		Clearly, if we choose $ N $ such that $ N \delta \in \IZ_+ $, then $ P = N (\delta - 1) $.   
		
		\smallskip
		
		\noindent 
		\emph{Let us come to (1):}
		Set $ \delta_ 1 := \delta_x ( \IE_1; u; y ) $ and $ \delta_2 := \delta_x ( \IE_1; u; y ) $. 
		Suppose the statement is wrong;
		without loss of generality, we assume that $ \delta_2 < \delta_1 $. 
		If $  \delta_1 = \infty $, then $ V(y) $ is permissible for $ \IE_1 $ and hence also for $ \IE_2 $ since $ \IE_2 \sim \IE_1 $.
		But this is a contradiction to $ \delta_2 < \delta_1 = \infty $.
		
		Thus, we may suppose $ \delta_2 < \delta_1 < \infty $. 
		Since $ \delta_2 \geq 1 $, we have $ \delta_1 > 1 $. 
		Choose $ N \in \IZ_+ $ such that $ N \delta_1, N\delta_2 \in \IZ_+ $.
		By construction, $ \mathcal{S}(N,\delta_1) $ is permissible for $ \IE_1 $.
		Since $ \IE_2 \sim \IE_1 $, the {\LSB} $ \mathcal{S}(N,\delta_1) $ is also permissible for $ \IE_2 $. 
		But this provides a contradiction: 
		Using the analogous notation of before, we have
		\[
		\exists \, v = (v_t, v_u) \in \poly{\widetilde{\IE_2}}{\widetilde t, \widetilde u}{\widetilde{y}} 
		\, : \, 
		v_t = N( \delta_2 - 1)  .
		\]
		Further, along the blowups of the second part of $ \mathcal{S} (N,\delta_1) $ the point $ (v_t,v_u) $ is mapped to 
		$ (v_t - P, v_u ) $, where $ P = N(\delta_1 - 1) $. 
		But we have 
		\[ 
			v_t  - P = v_t = N( \delta_2 - 1) -  N(\delta_1 - 1) =  N(\delta_2 - \delta_1) < 0 ,
		\]
		which implies that $ \mathcal{S}(N,\delta_1) $ is not permissible for $ \IE_2 $. 
		
		\smallskip
		
		\noindent 
		\emph{For {(2)}:}
		Set $ \IE := \IE_1 $.
		Observe that $ \delta_x ( \IE \cap (y,1); u; y ) = \delta_x (\IE; u ; y) $.
		We define $ \delta_y := \delta_x (\IE; u ; y) $ and $ \delta_z := \delta_x (\IE; u ; z) $.
		Suppose the statement is wrong;
		without loss of generality, we assume that $ \delta_y < \delta_z $. 
		If $  \delta_z = \infty $, then $ V(z) $ is permissible for $ \IE \cap (z,1) $ and hence also for $ \IE \cap (y,1)$. 
		On the other hand, every center that is permissible for $ \IE \cap (y,1)$ must be contained in $ V (y) $.
		Hence, we have $ V(z) \subseteq V (y) $ and equality follows as both have the same dimension. 
		But $ V(y) $ begin permissible for $ \IE $ contradicts $ \delta_y  < \delta_z = \infty $.
		
		Thus, we may suppose $ \delta_y < \delta_z < \infty $. 
		Since $ \delta_y \geq 1 $, we have $ \delta_z > 1 $. 
		Choose $ N \in \IZ_+ $ such that $ N \delta_y, N\delta_z \in \IZ_+ $.
		After the first block of permissible blowups of $ \mathcal{S}(N,\delta_z) $,
		we obtain that $ V(\widetilde t, \widetilde z)  $ is permissible for $ \widetilde \IE $ 
		(using the notation analogous to the beginning of the proof).
		By repeating the argument of the $ \delta_z = \infty $ case and using that $ t $ is a variable that is independent of $ (y) $ and $ (z) $, we obtain that 
		$ V(\widetilde t, \widetilde y)  $ is also permissible for $ \widetilde \IE $.
		Continuing this, we obtain a step-by-step identification of $ \mathcal{S}(N,\delta_z) $ and $ \mathcal{S}(N,\delta_y) $, which eventually leads to a contradiction 
		since both have different lengths as 
		$ \delta_y < \delta_z $.
	\end{proof}

	{\red{Proposition~\ref{Prop:deltaEuyInv} provides one of our main results.}}
	
	\begin{Thm}[Theorem \ref{MainThm:deltaINV}]
		\label{Thm:deltaINV}
		Let $ \IE $ be a pair on $ Z $ and $ x \in \Sing ( \IE ) $. 
		Let $ ( u, y ) $ be a regular system of parameters of $ \cO_{ Z,x } $ such that $ ( y ) $ determines $ \Dir_{x'} ( \IE' ) $ (Notation \ref{nota:E'}).
		
		The rational number $ \delta_x ( \IE, u ) $ does not depend on $ ( y ) $ and is invariant under the equivalence relation $ \sim $.
		Therefore $ \delta_x ( \IE, u )_\sim $ is an invariant of the idealistic exponent $ \IE_\sim $ and $ ( u ) $.
	\end{Thm}
	
	\begin{proof}
		By definition, $ \delta_x ( \IE, u ) $  does not depend on $ ( y ) $.
		By Theorem \ref{Thm:BerndHiro} it is attained by some $ ( \hy ) $ living in the completion of the local ring at $ x $.	
		Proposition \ref{Prop:deltaEuyInv} implies then the invariance under $ \sim $.
	\end{proof}
	
	If we drop the assumption on $ ( y ) $ to give the directrix, then we do not know if there is a polyhedron which is independent of the system $ ( y ) $;
	we have shown in Example \ref{Ex:AssumpThmNecessary} that we are not able to make $ \Delta ( (f, b) ; u, y ) $ independent of this choice.
	
	But still we can say something in the case, where $ (y) = (y_1, \ldots, y_s ) $ can be extended to a system $ (y_1, \ldots, y_r), r > s ,$ which yields the directrix:
	
	\begin{Lem}
		\label{Lem:DeltaEins}
		Let $ \IE = (J,b) $ be a pair on $ Z $ and $ x \in \Sing ( \IE ) $ as before 
		(thus $ \ord_x ( J ) \geq b $).
		Fix a system of elements $ ( u_1, \ldots, u_d ) $ in $ R = \cO_{Z,x} $ which can be extended to a regular system of parameters for $ R $.
		Let $ ( y ) = ( y_1, \ldots, y_s ) $ be such an extension of $ (u) $.
		Assume that $ (y, u_{ e + 1 }, \ldots, u_d ) $, $ e < d $, defines the directrix $ \Dir_{x'} ( \IE' ) $.
		Then we have
		$$
		\delta_x ( \, \poly{\IE}{u_1,\ldots, u_d}{ y_1, \ldots, y_s} \, ) = 1 
		$$
	\end{Lem}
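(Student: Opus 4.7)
The plan is to prove $\delta_x(\poly{\IE}{u}{y}) = 1$ by establishing the two inequalities $\geq 1$ and $\leq 1$ separately, using the hypothesis $x \in \Sing(\IE)$ for the former and the directrix-minimality built into the assumption on $(y_1,\ldots,y_s,u_{e+1},\ldots,u_d)$ for the latter.

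I would first dispatch the lower bound. Since $x \in \Sing(\IE)$ we have $\ord_x(J) \geq b$, so for every $f \in J$ and every term $\coeff_{A,B}\,u^A y^B$ in an $\widehat R$-adic expansion of $f$ with $\coeff_{A,B} \neq 0$ and $|B| < b$, one obtains $|A| + |B| \geq b$, hence $|A|/(b-|B|) \geq 1$. By Corollary~\ref{Cor:DeltaIEuyIndepGenerators} every generating point of $\poly{\IE}{u}{y}$ is of the form $A/(b-|B|) + \IR^d_{\geq 0}$ for such a triple, so the norm bound $\geq 1$ propagates from the generating points to every $v$ in the closed convex hull (a convex combination of generating points translated by a vector in $\IR^d_{\geq 0}$).

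The upper bound is the heart of the argument: it suffices to produce a single point of the polyhedron with norm exactly $1$. The hypothesis says that $(Y_1,\ldots,Y_s, U_{e+1},\ldots,U_d)$ is a \emph{minimal} list of linear forms such that $In_x(\IE) \subset \resfield[Y_1,\ldots,Y_s, U_{e+1},\ldots,U_d]$. Minimality rules out the possibility that some $U_i$ with $i > e$ fails to appear in any generator of $In_x(\IE)$; otherwise $U_i$ could be deleted from the list, contradicting minimality. Pick such an $i \in \{e+1,\ldots,d\}$ together with a generator $F$ of $In_x(\IE)$ in which $U_i$ occurs, and lift $F$ to an element $f \in J$ with $\ord_x(f) = b$. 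The expansion of $f$ then contains a term $\coeff_{A,B}\,u^A y^B$ with $|A|+|B| = b$ and $A_i \geq 1$; in particular $|B| = b - |A| < b$, and the associated generating point $A/(b-|B|)$ lies in $\poly{\IE}{u}{y}$ with norm $|A|/(b-|B|) = 1$.

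The only obstacle worth mentioning is the minimality step; the remainder is a direct unfolding of the defining formula for $\poly{\IE}{u}{y}$ recorded in~(\ref{eq:PolyIdExpConcrete}). It is worth noting that this lemma is strictly weaker than a minimality statement for the polyhedron itself: Example~\ref{Ex:AssumpThmNecessary} showed that $\poly{\IE}{u}{y}$ need not equal $\cpoly{\IE}{u}$ when $(y)$ fails to generate the full directrix, but the present statement says that $\delta_x$ is nonetheless forced to equal $1$ whenever the "missing" directrix-directions already appear among the $(u)$-variables.
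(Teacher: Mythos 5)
Your proof is correct and follows essentially the same route as the paper's: the lower bound $\delta_x \geq 1$ comes from $\ord_x(J)\geq b$, and the upper bound comes from using $e<d$ together with the minimality of the directrix to exhibit an $f\in J$ whose degree-$b$ initial form involves some $U_i$ with $i>e$, hence producing a generating point $A/(b-|B|)$ of the polyhedron of norm exactly $1$. The paper phrases the directrix step as ``there is an $f\in J$ with $in(f,b)\notin \resfield[Y_1,\ldots,Y_s]$'' rather than isolating a specific variable $U_i$, but these are the same observation; the remaining steps are identical.
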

	
	In particular, this is independent of the choice of $ ( y ) $ and invariant under $ \sim $.
	Therefore it is an invariant only depending on the idealistic exponent $ \IE_\sim $ and $ ( u ) $.
	
	\begin{proof}
		By assumption there is an $ f \in J $ with $ \ini ( f , b ) \notin \resfield [ Y_1, \ldots, Y_s ] $.
		Hence its expansion $ f = \sum_{(A,B)} \coeff_{A,B} \, u^A \, y^B $ there is an $ (A,B) $ such that
		$$
		\coeff_{A,B} \neq 0, 
		\hspace{10pt}
		|A| \neq 0 
		\hspace{10pt}
		\mbox{ and }
		\hspace{10pt}
		|A| + |B| = b.
		$$
		Since $ I\Dir_x (\IE) = \langle Y_1, \ldots, Y_s, U_{ e + 1 }, \ldots U_d \rangle $, we can choose $ ( A,B) $ such that the corresponding monomial cannot be deleted by any coordinate changes.
		Then $ ( A,B) $ yields in the polyhedron $  \Delta_x (  \IE; u_1,\ldots, u_d; y_1, \ldots, y_s  ) $ the point $ v : = \frac{ A }{ b - |B| } $ with $ | v | = 1 $.
		Further, $ \ord_x ( J ) \geq b $ implies
		$$
		\delta_x (  \,  \poly { \IE }{ u_1,\ldots, u_d }{ y_1, \ldots, y_s } \, ) \geq 1.
		$$
		Together this yields the assertion.
	\end{proof}

	\color{black}
	
	Using the invariant $ \delta_x (\IE;u;y) $ of Proposition~\ref{Prop:deltaEuyInv}, we may introduce the following refinement of the $ b $-initial form (and the tangent cone). 
	
	\begin{Def}
		\label{Def:in_delta_E}
		Let $ R $ be a regular local ring
		with maximal ideal $ \maxIdeal $ and residue field $ \resfield = R/\maxIdeal $.
		Let $ \IE = ( J ,b ) $ be a pair on $ R $.
		We assume that the closed point $ x $ of $ \Spec(R) $ is contained in $ \Sing (\IE ) $.	
		Let $ (u,y) = (u_1, \ldots, u_d; y_1, \ldots, y_s) $ be a regular system of parameters for $ R $
		and set 
		\[  
		\delta := \delta_x (\IE;u;y) .
		\] 
		For $ f = \sum_{(A,B)} C_{A,B} u^A y^B \in J $ with $ C_{A,B} \in R^\times \cup \{ 0 \} $,
		we define the 
		{\em $ (\delta,b) $-initial form of $ f $}
		as 
		\[
		\ini_\delta (f,b) 
		:= 
		\sum_{|B|=b} \overline{C_{0,B}} Y^B 
		+
		\sum_{\frac{A}{b- |B|} =  \delta } \overline{C_{A,B}} U^A Y^B
		\in \resfield [U_1, \ldots, U_d, Y_1, \ldots, Y_s] , 
		\]
		where $ \overline{C_{A,B}} $ are the residue classes in $ \resfield $. 
		Furthermore, we introduce
		\[
		\ini_\delta ( \IE) = \ini_\delta ( J,b )
		= \langle \ini_\delta (f,b) \mid f \in J \rangle. 
		\]
	\end{Def}
	
	\smallskip 
	
	We have the following corollary of Proposition~\ref{Prop:deltaEuyInv}.
	
	\begin{Thm}[Theorem \ref{MainThm:deltaINV_2}]
		\label{Thm:deltaINV_2}
		Let $ \IE_1 \sim \IE_2 $ be two equivalent pairs on $ Z $ and $ x \in \Sing (\IE_1 ) $.	
		Let $ (u,y) = (u_1, \ldots, u_d; y_1, \ldots, y_s) $ be a regular system of parameters for $ R := \cO_{Z,x} $.
		Suppose that $ \IE_i $ is determined by $ (J_i,b_i) $ in $ R $, for $ i \in \{ 1, 2 \} $.  
		If we define $ \delta := \delta_x ( \IE_1; u; y ) $, 
		then we have  
		\[ 
		(\ini_{\delta}(\IE_1),b_1) \sim (\ini_{\delta}(\IE_2),b_2)
		.
		\]
	\end{Thm}
	
	\begin{proof}
		Since $ x \in \Sing(\IE_1 ) $, we have $ \delta \geq 1 $.
		If $ \delta = 1 $ and $ I\Dir_x (\IE_1) =  \langle Y_1, \ldots, Y_s \rangle $, then $ \ini_\delta (f) = \ini (f,b) $
		and $ (\ini_{\delta}(\IE_i),b_i) = \ITC_x ( \IE_i) $ is the idealistic tangent cone pair for $ i \in \{ 1, 2 \} $. 
		Hence,  Proposition~\ref{Prop:ItcDirRidUnique}(1) provides the statement. 
		 
		The idea for the general case is analogous to the one for the proof of Proposition~\ref{Prop:ItcDirRidUnique}(1). 
		Assume the claimed equivalence is wrong,
		$ (\ini_{\delta}(\IE_1),b_1) \not\sim (\ini_{\delta}(\IE_2),b_2) $. 
		Then there exists a {\LSB} that is permissible for one, but not the other.
		In order to lift this to the original pairs $ (J_1, b_1) $ and $(J_2, b_2)$, we first blow up $ V(u,y,T) $, where $ T $ is a new variable, and consider the origin of the $ T $-chart. 
		We repeat this $ d $ times and we assume that $ d \gg 1 $ is large enough such that $ V(y,T) $ is permissible and such that $ d \delta \in \mathbb{Z} $ is an integer. 
		Then, we blow up $ N := d ( \delta -1 ) $ times the center $ V(y,T) $ (here, we use that this number is an integer; 
		even though it is possibly zero). 
		Let $ \IE = (J,b) \in \{ \IE_1, \IE_2 \} $.
		Along the described sequence of blow-ups, an element 
		\[ 
		f = 
		\sum_{|B|=b} C_{0,B} y^B 
		+ 
		\sum_{\frac{|A|}{b-|B|} =\delta } C_{A,B} u^A y^B 
		+ 
		\sum_{\frac{|A|}{b-|B|} > \delta} C_{A,B} u^A y^B 
		+ h \in J ,
		\]
		for $ h \in \langle y \rangle^{b+1} + ( \langle y \rangle^b \cap \langle u \rangle ) $
		(and the other notation as in the previous definition), 
		transforms to $ f' := T^{-db-Nb}f $, which is
		\[ 
		\sum_{|B|=b} \widetilde{C_{0,B}} y^B 
		+ 
		\sum_{\frac{|A|}{b-|B|} =\delta } \widetilde{C_{A,B}} u^A y^B 
		+ 
		\underbrace{ 
			\sum_{\frac{|A|}{b-|B|} > \delta} \widetilde{C_{A,B}} u^A y^B T^{ (\frac{d |A|}{b-|B|} - d - N ) (b- |B|)} 
		}_{\displaystyle =: g }
		+ 
		T^\alpha \widetilde{h},
		\]
		where $ \widetilde{h} \in  \langle y \rangle^{b+1} + ( \langle y \rangle^b \cap \langle u \rangle ) $ and $ \alpha \in \IZ_+ $.
		(Clearly, we abuse notation here since we still denote the variables by $ (u,y, T) $ after the sequence of blow-ups.)
		Note that we write $ \widetilde{C_{A,B}} $ since the units $ C_{A,B}$ might have changed slightly, but that is not a problem. 
		
		If we choose $ d $ large enough, then we can get that $ T^b $ divides $ g  $ since $ \frac{|A|}{b-|B|} > \delta \geq 1 $ for all appearing $ (A,B) $.
		In particular, we have
		\[ 
		f' 
		\equiv \ini_\delta(f,b) \mod \langle T \rangle 
		\]
		Let us point out that we are working in $  S := R[\frac{u}{T^{d+N}}, \frac{y}{T^{d+N}}, T ] $
		(where $ (u,y) $ are the original variables in $ R $)
		and hence, $ S / \langle T \rangle \cong \resfield [U,Y] $
		by sending the residues of 
		$ \frac{y_j}{T^{d+N}} $ (resp.~of $ \frac{u_i}{T^{d+N}} $)
		modulo $ \langle T \rangle $
		to $ Y_j $ (resp.~$ U_i $) for all $ j $ (resp.~$ i $),
		and $ r \in R $ is sent to its residue in $ \resfield = R / \maxIdeal $, 
		as $ R \cap \langle T \rangle_S  = \maxIdeal $. 
		
		Applying the same arguments as in the proof of Proposition~\ref{Prop:ItcDirRidUnique}(1),
		we obtain that any {\LSB},
		which is permissible for
		$ (\ini_{\delta}(\IE_1),b_1) $, but not for 
		$ (\ini_{\delta}(\IE_2),b_2) $, 
		lifts to a {\LSB} in $ R[T] $, 
		which is permissible for $ \IE_1 $, but not for $ \IE_2 $. 
		This contradicts the hypothesis $ \IE_1 \sim \IE_2$.
		The case that there is a {\LSB}, which is permissible for
		$ (\ini_{\delta}(\IE_2),b_2) $, but not for 
		$ (\ini_{\delta}(\IE_1),b_1) $,
		follows analogously. 
	\end{proof}
	
	\color{black}

	A first application of the characteristic polyhedra of idealistic exponents and these results is given in \cite{BerndBM}, where the author deduces the invariant of Bierstone and Milman for constructive resolution of singularities in characteristic zero purely by considering certain polyhedra and their projections.

	%
	%
	%
	%
	%
	%
	%
	%
	%
	%
	%
	%
	%

	\section{Idealistic coefficient exponents}
	
	A useful tool to study the singularities at a point $ x \in Z $ in characteristic zero is the coefficient ideal \wrt a closed subscheme of maximal contact.
	
	We now give the precise definition of the coefficient ideal in the idealistic setting.
	But we do not restrict our attention to characteristic zero and admit an arbitrary residue field of $ Z $ at $ x $.
	It is known that the concept of maximal contact does not work in full generality. 
	Therefore we define the coefficient pair \wrt any regular subvariety $ W = V(z) = V(z_1, \ldots, z_s ) $ containing $ x $;
	we only want to assume that $ (z) $ is part of a regular system of parameters for the local ring $ R $ of $ Z $ at $ x $.
	(The interesting case for us is, when $ W = V ( y_1, \ldots, y_s ) $ ($ s \leq r $), where $ (y) = (y_1, \ldots, y_r ) $ is such that the image of $ (y) $ in $ \gr_x (Z) $ defines the directrix $ \Dir_x ( \IE ) $).
	
	\begin{Def}
		\label{Def:IdCoeffExp}
		Let $ \IE = (J,b) $ be a pair on $ Z $ and $ x \in Z $.
		Let $ (R = \cO_{Z,x}, \maxIdeal, \resfield) $ be the regular local ring of $ Z $ at $ x $.
		We consider a fixed system of elements $ (u) = (u_1, \ldots, u_d) $ which can be extended to a regular system of parameters for $ R $.
		Let $ (z) = ( z_1, \ldots, z_s ) $ be elements of $ R $ such that $ (u,z) $ is a regular system of parameters for $ R $.
		We define the {\em coefficient pair $ \ID_x (\IE, u, z )  $ of $ \IE $ at $ x $ \wrt $ (z) $} as the pair on $ W = \Spec ( R/\langle z \rangle ) $ which is given by the following construction:
		By \cite[Remark~1.9]{BerndPartial}, any $ f \in J_x $ may be written as
		$$
		f = f (u,z) = \sum_{\substack{B \in \IZ^s_{ \geq 0 }\\|B|<b}} f_B (u)\, z^B + h, 
		$$
		for some $ h \in \langle z \rangle^b $ and coefficients $ f_B(u) \in R $ that do not depend on $ (z) $.
		Then we set $ \ID(f,u,z)  := \bigcap\limits_{\substack{B \in\IZ^s_{ \geq 0 } \\[3pt] |B| < b}} ( f_B (u) , \, b -|B| ) $ and define further 
		$$
		\ID_x (\IE, u, z) := \bigcap_{ f \in J_x } \ID(f, u, z)  =  \bigcap_{ l = 0 }^{ b - 1 } \;\; (\; I(l, u, z) ,\; b - l \;),
		$$
		where $ I (l, u, z)  = \langle \, f_B \mid f \in J_x, B \in \IZ_{ \geq 0 }^s : |B| = l \,\rangle $.
	\end{Def}
	
	The idea of coefficient ideals goes back to Hironaka 
	(in the context of idealistic exponents this appears in \cite[Theorem 1.3, p.908]{HiroKorean} and \cite[section 8, Theorem 5, p.111]{HiroIdExp})
	and was developed by Villamayor (for basic objects) and Bierstone-Milman (for presentations).
	
	\smallskip 
	
	We may consider $ \ID_x (\IE, u, z )  $ as a pair on $ R/\langle z \rangle $ as well as on $ R $.
	
	An important invariant of the singularity of $ \IE $ at $ x $ is the order of the coefficient pair \wrt a system $ ( y ) $ which determines $ \Dir_x ( \IE ) $.
	Using Definition~\ref{def:delta(Delta)}, this can be recovered from the polyhedron $ \poly \IE uy $:

		\begin{Lem}
			\label{Lem_d_xPolyhedral}
			Let $ \IE = (J,b)  $ be a pair on $ Z $, $ x \in \Sing (\IE ) $, and $ (u,y) $ a regular system of parameters for the local ring $ \cO_{Z,x} $.
			Then we have
			\begin{enumerate}
				\item
				$ \Delta (\IE, u, y) = \Delta^\NW ( \ID_x (\IE, u, y) , u) \subseteq \IR^d_{ \geq 0 }  $
				and 
				
				\item 
				$
				\delta_x ( \Delta (\IE, u, y) )
				$
				coincides with the order of $ \ID_x (\IE, u, y)  $ at $ x $. 
			\end{enumerate}
		\end{Lem}
		
		This is an immediate consequence of Definition~\ref{Def:NewtonPolyandPolyNonIntrinsic}.		
		Note that we did not make any further assumptions on the system $ ( y ) $ (e.g. that it yields the directrix of $ \IE $).

	\vspace{5pt}
	In our context one of the first questions coming into one's mind may be the following:
	Are the coefficient pairs of equivalent pairs also equivalent?
	For the idealistic approach there is no reference known to the author where this is proven.
	Hence we give the affirmative answer in
	
	\begin{Thm}[Theorem~\ref{MainProp:IdCoeffExpIndp}(1)]
		\label{Thm:IDequiv}
		Let $ \IE_1 \subset \IE_2 $ be two pairs on $ Z $, $ x \in Z $, 
		and consider a regular system of parameters $ (u,z) = (u_1, \ldots, u_d; z_1, \ldots, z_s ) $ for $ (R = \cO_{Z,x}, \maxIdeal, \resfield) $.
		Then we have 
		\[
		\ID_x (\IE_1 , u, z) \subset \ID_x (\IE_2 , u, z) .
		\]
		By symmetry, $ \IE_1 \sim \IE_2 $ implies $ \ID_x (\IE_1 , u, z) \sim \ID_x (\IE_2 , u, z)  $.
	\end{Thm}
	
	This implies that an idealistic exponent $ \IE_\sim $ determines a unique idealistic exponent $ \ID_x (\IE , u, z)_\sim $, called the \emph{idealistic coefficient exponent} (with respect to $ (z) $).
	
	\begin{proof}
		For $ i \in \{ 1, 2 \} $, we consider
		$ \IE_{i,x} = (J_i, b) $ on $ R $.
		In order to simplify the notation we suppress the index $ x $ and write $ \IE_i = \IE_{i,x} $.
		Further, we define $ \ID_i := \ID_x (\IE_i , u, z)  $. 
		
		Suppose $ \ID_1 \not\subset \ID_2 $. 
		Then there exists a {\LSB} $ ( \diamondsuit ) $ over $ (R/ \langle z \rangle)[t] $ which is permissible for $  \ID_1[t] $, but it is not permissible for $ \ID_2[t] $, where $ (t) = (t_1, \ldots, t_a ) $ is a finite system of independent variables.
		We can lift it to a {\LSB} $ ( \widetilde{\diamondsuit} ) $ over $ R[t] $ by intersecting the centers with $ V(z) $.
		Then $ ( \widetilde{\diamondsuit} ) $ is permissible for $ (z,1) \cap \ID_1[t] $,
		but it is not permissible for $ (z,1) \cap \ID_2[t] $. 
		
		By construction, the local charts considered in $ ( \widetilde{\diamondsuit} ) $ contain the strict transform of $ V(z) $ and hence the construction of the coefficient is compatible with $ ( \widetilde{\diamondsuit} ) $.
		In particular, we observe that 
		$ \ID_i = \bigcap_{f \in J_i} \bigcap_{|B|<b } (f_B, b - |B|) $, $ i \in \{ 1, 2 \} $,
		(with $ f = \sum_{|B|<b} f_B(u) z^B + h $ an expansion as before) 
		provides that $ ( \widetilde{\diamondsuit} ) $ is permissible for $ \IE_1 $, but it is not permissble for $ \IE_2 $. Contradiction.
	\end{proof}

	As a consequence from the proof, we obtain 
	
	\begin{Cor}	
		\label{Cor:zE=zCoeff}
		Using the same notation as in the proof of Theorem~\ref{Thm:IDequiv}, we have:
		$$
		(z,1) \cap \IE \sim (z,1) \cap \ID_x (\IE , u, z)
		$$
		(Keep in mind that we have here the local situation at a point $ x $).
	\end{Cor}
	
	By the last theorem, $ \ID_x (\IE , u, z) $ is invariant under the equivalence relation $ \sim $ if we fix $ ( u, z ) $.
	But we might also consider various choices for $ ( z ) $. 
	In this case we have 
	
	\begin{Prop}[Theorem \ref{MainProp:IdCoeffExpIndp}(2)]
		\label{Prop:IDEz=IDEy}
		Let $ \IE $ be a pair on $ Z $ and $ x \in Z $. 
		Fix a system of elements $ ( u ) = (u_1, \ldots, u_d) $ which can be extended to a regular system of parameters for  $ (R = \cO_{Z,x}, \maxIdeal, \resfield) $.
		Let $ (z) = ( z_1, \ldots, z_s ) $ and $ (y) = ( y_1, \ldots, y_s ) $ be two possible extensions of $ (u) $.
		Assume $ (z,1) \cap \IE \subset (y,1) \cap \IE $.
		Then
		$$
		\ID_x (\IE , u, z) \subset \ID_x (\IE , u, y) .
		$$
		By symmetry, $ (z,1) \cap \IE \sim (y,1) \cap \IE $ implies $ \ID_x (\IE , u, z) \sim \ID_x (\IE , u, y)   $.
	\end{Prop}
	
	\begin{proof}
		First of all, Corollary \ref{Cor:zE=zCoeff} and the assumption imply
		\begin{equation} 
		\label{eq:noch_ein_altes_ast}
		(z,1) \cap \ID_x (\IE , u, z) \sim (z,1) \cap \IE 
		\subset 
		(y,1) \cap \IE \sim  (y,1) \cap \ID_x (\IE , u, y).
		\end{equation} 
		Let $ ( \diamondsuit ) $ be a {\LSB} over $ (R/ \langle z \rangle)[t] $ which is permissible for $  \ID_x (\IE , u, z) $.
		We can lift it to a {\LSB} $ ( \widetilde{\diamondsuit} ) $ over $ R $  by intersecting the centers with $ V(z) $. 
		Then $ ( \widetilde{\diamondsuit} ) $ is permissible for $ (z,1) \cap \ID_x (\IE , u, z) $ and by \eqref{eq:noch_ein_altes_ast} it is so for   
		$ (y,1) \cap \ID_x (\IE , u, y) $.
		In particular, it is permissible for $ \ID_x (\IE , u, y) $ and since the latter lives on $ (R/ \langle z \rangle)[t] $, the {\LSB} $ (\diamondsuit) $ is permissible for $ \ID_x (\IE , u, y) $.
		This shows the assertion.
	\end{proof}
	
	Therefore under the special assumption  $ (z,1) \cap \IE \sim (y,1) \cap \IE $ the coefficient pair for a fixed system $ ( u ) $ does not depend on the choice of $ (z) $. 
	In particular this holds,
	\begin{itemize}
		\item	if $ (z, 1) \sim (y, 1) $ or
		\item 	if $ (y,1) \cap \IE \sim \IE \sim  (z,1) \cap \IE $. 
	\end{itemize}

	\smallskip
	
	We have the following result on the existence of maximal contact:

	\begin{Lem}
		\label{Lem:ExistenceMaxContact}
		Let $ \IE = (J,b) $ be a pair on $ Z $, $ x \in \Sing ( \IE ) $,
		and consider a regular system of parameters $ (u,y) = (u_1, \ldots , u_e, y_1, \ldots, y_r ) $ for $ ( R = \cO_{ Z, x}, \maxIdeal, \resfield ) $ such that the images of $ (y) $ in $ \maxIdeal / \maxIdeal^2 $ define the $ \Dir_x (\IE) $.
		Assume $ \car{\resfield} = 0 $ or $ b < \car{\resfield} $.
		Furthermore, suppose that every differential operator $ \cD_{ N } \in \Diff_\resfield^{ \leq b - 1 } ( \gr_x(Z) ) $ defined via $ \cD_{ N } ( \coeff \, U^A Y^B ) = \binom{B}{ N } \, \coeff \, U^A  Y^{ B - N } $, $ C \in K $, lifts to a differential operators $ \cD_N'  \colon R \to R $, for every $ N \in  \IZ^{r}_{\geq 0} $ with $ |N| < b $.
			
		Then there exists a system $ (z) = (z_1, \ldots, z_r) $ of elements in $ R $ such that we have for every $ j \in \{ 1, \ldots, r \} $:
		\begin{enumerate}
			\item 
			The images of $ z_j $ and $ y_j $ in $ \maxIdeal / \maxIdeal^2 $ coincide.
			\item 
			If we set $ ( \widetilde{u}^ {(j)} ) := (u, z_1, \ldots, z_{j-1}, z_{j+1}, \ldots, z_r ) $, then 
			$$ 
			\IE_x \sim (z_j,1) \cap  \ID_x (\IE, \widetilde{u}^{(j)}, z_j ) .
			$$
			In particular, we have $ \IE_x \sim (z,1) \cap  \ID_x (\IE, u, z ) $,
			i.e., each $ V ( z_j ) $ (and thus $ V (z_1, \ldots, z_r ) $) has maximal contact with $ \IE $ at $ x $.
			
			\item 
			There exist $ \cD_{ j } \in \Diff_\resfield^{ \leq b - 1 } ( \resfield [Y] ) $ and $ F(j) \in \ini_x (\IE) $ such that $ \cD_{ j } (F (j)) = { \epsilon_j } \, Z_j $ for some units $ { \epsilon_j } \in \resfield $. 
			Moreover, there are $ f(j) \in J $ which map in $ \gr_x (Z) $ to $ F (j) $ and $ ( \cD'_{ j } ( f (j) ), 1 ) \sim ( z_j, 1 ) $, where $ \cD'_j $  denotes the differential operator on $ R  $ induced by $ \cD_j $.
		\end{enumerate}
	\end{Lem}
	
	\begin{proof}
		In fact, (1) and (2) follow from the proof of (3).
		Thus we focus on this part.
		Recall the following from the proof of Corollary \ref{Cor:DirRidITCchar0}:
		
		Every element $ F \in \langle Y \rangle^b \setminus \langle Y \rangle^{ b + 1 } $ that is part of a system of generators for $ \ini_x (\IE) $ can be written as			
		$
		F = \sum_{ B \in\IZ^r_{ \geq 0 } : |B| = b } \coeff_B \,Y^B,
		$
		for some $ \coeff_B \in \resfield $.
		For any $ j \in \{ 1, \ldots, r \} $, there exists a generator $ F(j) $ of $ \ini_x (\IE) $ such that there is a $ B(j) = (B_1, \ldots, B_r) \in \IZ_{ \geq 0 }^r $ with $ \coeff_{B(j)} \neq 0 $ and $ B_j \geq 1 $ ($ Y_j $ appears).
		Set $ M (j) := B(j) - e_j \in \IZ^r_{ \geq 0 } $, $ | M(j) | = b - 1 $.
		Let  $ \cD_{ j } := \cD_{ M(j) } \in \Diff_\resfield^{ \leq b - 1 } ( \resfield [Y] ) $ the differential operator which is defined via $ \cD_{ M(j) } ( \coeff \, Y^B ) = \binom{B}{ M(j) } \, \coeff \, Y^{ B - M(j) } $.
		Consequently, 
		
		\vspace{9pt}
		\centerline{$ 
			\cD_{ M(j) } (F (j)) = \coeff_{B(j)} \, B_j \, Y_j + \sum_{B'(i)} \coeff_{ B'(i) } \, B'_i \, Y_i,
			$}
		
		\vspace{9pt}
		\noindent
		where $ B' ( i ) = ( B'_1, \ldots, B'_r ) \in \{ M(j) + e_i \mid i \in \{ 1, \ldots, r \} \setminus \{ j \} \} $.
		The assumptions on $  \car{ \resfield } $ imply that $  B_j $ (and thus $  \coeff_{B(j)}\, B_j $) is a unit in $ \resfield $.
		Define $ Y^\ast_j  \in \resfield [ Y ] $ via
		$$
		Y^\ast_j := ( \coeff_{B(j)}\, B_j  )^{ - 1 } \, \cD_{ M(j) } \, F(j) = Y_j + \sum_{B'(i)} ( \coeff_{B(j)}\, B_j  )^{ - 1 }\coeff_{ B'(i) } \, B'_i \, Y_i.
		$$
		We choose a system of representatives of $ \resfield = R / \maxIdeal $ in $ R $ and define using the previous for $ j = 1 $ the element $ y^\ast_1 \in R $ by replacing $ (Y) $ by $ ( y ) $ in $ Y^\ast_1 $.
		The system $ ( y^\ast_1, y_2, \ldots, y_r ) $ fulfills the same properties as $ ( y ) $. 
		So we may consider the regular system of parameters $ ( u; y^\ast_1, y_2, \ldots, y_r ) $ instead of $ ( u, y ) $ and put $ \cD_1 := \cD_{M(1)} $.
		Then we repeat the above procedure for $ j = 2 $ to obtain $ y^\ast_2 $ and $ \cD_2 $.
		We continue this until we have obtained $ ( y^\ast ) = ( y^\ast_1, \ldots, y^\ast_r ) $ .
		
		Denote by $ \cD'_j $ the differential operator on $ \widehat{ R } $ induced by $ \cD_j $,  $ 1 \leq j \leq r $.
		($\cD_j $ extends by acting trivially on $ (u) $).
		Further, there exist $ f(j) \in J \widehat{ R } $, which are mapped to $ F(j) \in \gr_x (Z) $ and
		$
		\cD'_{ j } ( f (j) ) = \tilde\epsilon_j y^\ast_j + h_j
		$
		for some units $ \tilde\epsilon_j \in R $ and elements $ h_j \in \hR $, which do not involve $ y^\ast_j $.
		Set, for every $ j \in \{ 1, \ldots, r \} $,
		$$
		z_j := y^\ast_j + \tilde\epsilon_j^{ -1 } h_j .
		$$
		Then $ \cD_{ j } (F (j)) = { \epsilon_j } \, Z_j $, $ { \epsilon_j }  = { \tilde\epsilon_j }  \mod \maxIdeal $, $ ( \cD'_{ j } ( f (j) ), 1 ) = ( z_j, 1 ) $ and by the Diff Theorem \ref{Prop:Diff} we have 
		$ \IE_x \sim \IE_x \cap (z_j,1) $.
		Together with Corollary \ref{Cor:zE=zCoeff} we get
		$
		\IE_x \sim (z,1) \cap  \ID_x (\IE, u, z ).
		$
		This proves (1) and (2) holds by construction of the elements $ ( z ) $.
	\end{proof}

	\begin{Cor}
		Fix $ (u) $ as above and let $ (y) $ and $ (z) $ be two extensions of $ (u) $ to a regular system of parameters such that $ V(y) $ and $ V(z) $ have maximal contact.
		Then we have $  \ID_x (\IE, u, y ) \sim   \ID_x (\IE, u, z ) $.
	\end{Cor}
	
	\begin{proof}
		By the previous lemma and Corollary \ref{Cor:zE=zCoeff}
		$$ 
		(y,1) \cap \IE_x \sim (y,1) \cap  \ID_x (\IE, u, y ) \sim \IE_x \sim (z,1) \cap  \ID_x (\IE, u, z )  \sim (z,1) \cap \IE_x
		$$ 
		and Proposition \ref{Prop:IDEz=IDEy} implies $  \ID_x (\IE, u, y ) \sim   \ID_x (\IE, u, z ) $.
	\end{proof}

	%
	%
	%
	%
	%
	%
	%
	%
	%
	%
	%
	%
	%

	\nocite{*}
	\bibliographystyle{cdraifplain}
	\bibliography{xampl}

\end{document}